\newcommand{\abs}{\vspace*{6pt}}
\DeclareMathOperator{\card}{card}
\DeclareMathOperator{\const}{const.}
\DeclareMathOperator{\Span}{span}
\DeclareMathOperator{\sign}{sign}
\DeclareMathOperator{\la}{\langle}
\DeclareMathOperator{\ra}{\rangle}
\newcommand{\skp}{{\langle .,. \rangle}}
\newcommand{\R}{\mathbb{R}}
\newcommand{\Z}{\mathbb{Z}}
\newcommand{\Q}{\mathbb{Q}}
\newcommand{\N}{\mathbb{N}}
\newcommand{\g}{\mathfrak{g}}
\renewcommand{\g}{\gamma}
\newcommand{\LL}{\mathcal{L}}
\newcommand{\TT}{\mathcal{T}}
\newcommand{\M}{\mathcal{M}}
\newcommand{\T}{{\mathbb{T}^2}}
\newcommand{\e}{\varepsilon}
\newcommand{\del}{\xi}
\newcommand{\al}{\alpha}
\newcommand{\om}{\omega}
\newcommand{\lam}{\lambda}
\newcommand{\sig}{\sigma}
\newcommand{\vf}{\varphi}
\theoremstyle{plain}
\newtheorem{defn}{Definition}[section]
\newtheorem{lemma}[defn]{Lemma}
\newtheorem{prob}[defn]{Problem}
\newtheorem{prop}[defn]{Proposition}
\newtheorem{thm}[defn]{Theorem}
\newtheorem{mainthm}[defn]{Main Theorem}
\theoremstyle{definition}
\newtheorem{remark}[defn]{Remark}
\newcommand{\mane}{Ma\~n\'e}
\newcommand{\Mane}{Ma\~n\'e }
\begin{document}

\hypersetup{pdftitle = {The stable norm on the 2-torus at irrational directions}, pdfauthor = {Stefan Klempnauer, Jan Philipp Schr\"oder}}

\title[The stable norm on $\T$ at irrational directions]{The stable norm on the 2-torus at irrational directions}
\author[S. Klempnauer, J. P. Schr\"oder]{Stefan Klempnauer, Jan Philipp Schr\"oder}
\address{Faculty of Mathematics \\ Ruhr University \\ 44780 Bochum \\ Germany}
\email{\url{stefan.klempnauer@rub.de}, \url{jan.schroeder-a57@rub.de}}
\date{\today}
\keywords{Finsler metric, stable norm, Mather's action functional, minimal geodesic, KAM-torus, hyperbolicity}

\begin{abstract}
 We study the structure of the stable norm of Finsler metrics on the 2-torus with a focus to points of irrational slope. By our results, the stable norm detects KAM-tori and hyperbolicity in the geodesic flow. Moreover, we study the stable norm in some natural examples.
\end{abstract}

\maketitle



\section{Introduction and main results}

In this paper we study properties of Finsler metrics $F:T\T\to\R$ on the 2-torus $\T=\R^2/\Z^2$, see \cite{BCS} for information on Finsler metrics. The Finsler metrics are not assumed to be reversible, such that our results apply to general Tonelli Lagrangians $L:T\T\to\R$, see \cite{cipp}. Readers unfamiliar with Finsler metrics may think of the norm $F(v)=\sqrt{g(v,v)}$ of a Riemannian metric $g$ in $\T$.

The object we study is the marked length spectrum $\sig_F$. We write
\[ \textstyle l_F(c;[a,b]) = \int_a^b F(\dot c) dt \]
for the $F$-length structure. Identifying $\Z^2\cong \pi_1(\T)$ the {\em marked length spectrum} is defined as
\[ \sig_F:\Z^2\to\R, \quad \sig_F(z) := \inf \left\{ l_F(c) : \text{the homotopy class of $c$ is } [c]=z \right\} . \]
Thus, $\sig_F$ contains information on closed $F$-geodesics.

We extend $\sig_F$ to a norm on $\R^2$. By the results of Hedlund \cite{hedlund} (see Theorem \ref{morse periodic} \eqref{morse periodic item 1} below), $\sig_F$ is positively homogeneous:
\[ \sig_F(a \cdot z) = a \cdot \sig_F(z) \qquad \forall z\in \Z^2,~ a \in \N_0 . \]
Moreover, using the fact that lifts of closed curves in linearly independent homotopy classes intersect in the universal cover $\R^2$, one infers
\[ z,w\in \Z^2 \text{ linearly independent} \quad \implies\quad \sig_F(z +w) < \sig_F(z)+\sig_F(w) .\]
Extending $\sig_F$ first homogeneously along lines of rational slope and then continuously to $\R^2$, we obtain a convex (in general non-reversible) norm
\[ \sig_F:\R^2\to\R \]
called the {\em stable norm} (see Section 2 of \cite{bangert-minimal-geod}). Note that $\sig_F$ is related to {\em Mather's $\beta$-function} 
\[ \beta_F:= \frac{1}{2}\sig_F^2 \]
of the ``Tonelli'' Lagrangian $L=\frac{1}{2}F^2$ (see Section 1 of \cite{massart-thesis}).

Let us recall the classical result on $\sig_F$ due to J. Mather \cite{mather1}, which is the starting point for our work. The theorem has a different proof due to V. Bangert \cite{bangert1}. (These authors prove the theorem for reversible Finsler metrics, while for non-reversible Finsler metrics the results are also true, see \cite{paper1}.) 

\begin{thm}[Mather, Bangert]\label{thm intro mather-bangert}
 Let $F$ be any Finsler metric on $\T$ with stable norm $\sig_F$ and $\xi=(\xi_1,\xi_2)\in \R^2-\{0\}$.
 \begin{enumerate}[(i)]
  \item\label{thm intro mather-bangert irrat} If $\xi$ has irrational slope $\xi_2/\xi_1\in\R-\Q$, then the stable norm $\sig_F$ is differentiable at $\xi$.
  
  \item\label{thm intro mather-bangert rat} If $\xi$ has rational or infinite slope $\xi_2/\xi_1\in\Q\cup\{\infty\}$, then the stable norm $\sig_F$ is differentiable at $\xi$ if and only if there exists a foliation of $\T$ by shortest closed geodesics in the homotopy class $z$, where $z$ is the prime element in $\Z^2\cap \R_{>0}\xi$.
 \end{enumerate}
\end{thm}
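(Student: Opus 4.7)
The plan is to interpret the subdifferential $\partial\sig_F(\xi)$ via cohomology: a class $c\in H^1(\T,\R)\cong\R^2$ lies in $\partial\sig_F(\xi)$ iff $c\cdot\xi=\sig_F(\xi)$ and $c\cdot z\le\sig_F(z)$ for every $z\in\Z^2$, which by a standard Hahn--Banach duality in Mather theory (cf.\ \cite{massart-thesis}) is equivalent to the existence of a closed 1-form $\omega$ on $\T$ with $[\omega]=c$ and $\omega\le F$ pointwise on $T\T$. For any such $\omega$ and any minimal geodesic $\g$ with rotation direction $\xi/|\xi|$, the minimality of $\g$ combined with the inequality $\omega\le F$ forces the pointwise identity $\omega(\dot\g(t))=F(\dot\g(t))$ along $\g$. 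By strict convexity of the Finsler indicatrix at each $p\in\T$, this identity together with $\omega\le F$ pins down $\omega|_{T_p\T}$ uniquely at every basepoint $p$ visited by a minimal geodesic in direction $\xi/|\xi|$; so differentiability reduces to the question whether this ``equality set'' $E\subset\T$ is rich enough to determine the cohomology class $c$.

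\textbf{Part (i) and the ``if'' direction of (ii).} For irrational slope, Hedlund's theorem (Theorem \ref{morse periodic}) produces a weakly ordered family of recurrent minimal geodesics with rotation $\xi/|\xi|$; by irrationality together with the no-crossing property, the equality set $E$ is either all of $\T$ (the KAM-torus case) or a Cantor-like lamination whose projection is dense in its closure. In either case $E$, combined with continuity of the Legendre transform, determines $\omega$ uniquely up to exact 1-forms and pins down $c$ uniquely. The same argument settles the ``if'' direction of (ii): a foliation by shortest closed geodesics in class $z$ makes $E=\T$, and uniqueness of $c$ follows.

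\textbf{``Only if'' of (ii): the main obstacle.} Suppose no foliation exists, and let $M\subset\T$ denote the closed union of all shortest closed geodesics in class $z$, so $M\neq\T$. By Hedlund's structure theorem, $\T\setminus M$ decomposes into open strips foliated by heteroclinic geodesics asymptotic to closed geodesics in $\partial M$. The task is to construct two distinct supporting classes $c_0\neq c_1\in\partial\sig_F(\xi)$. Starting from one supporting $c_0$ with representative $\omega_0$, one perturbs by a covector $\mu\in(\R^2)^*$ transverse to $z$ (so that $\mu\cdot z=0$) and seeks a corrector function $\phi_t$ such that $\omega_t:=\omega_0+t\mu+d\phi_t\le F$ pointwise on $T\T$ for some small $t\neq 0$, giving a new supporting class $c_t=c_0+t\mu$. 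The main obstacle is constructing $\phi_t$: the pointwise slack $\omega_0<F$ available over $\T\setminus M$ must absorb the change $t\mu$, while the corrector $\phi_t$ must redistribute this slack so that $\omega_t\le F$ remains valid near $M$ as well. Realizing this redistribution requires delicate analysis of the heteroclinic geodesics foliating $\T\setminus M$ and of their asymptotic behavior near $\partial M$; this is the geometric heart of the Mather--Bangert approach in \cite{mather1,bangert1}.
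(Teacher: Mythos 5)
First, note that the paper does not prove this theorem at all: it is quoted as a classical result, with the proofs attributed to Mather \cite{mather1} and Bangert \cite{bangert1} (and \cite{paper1} for the non-reversible Finsler case). So the comparison is with those proofs, and against that standard your proposal is a plausible outline of the calibration/duality framework but has genuine gaps at exactly the points where the real work lies. The most serious one is in part (i): after reducing differentiability to uniqueness of the supporting class $c$, you assert that the equality set $E$ (the projected calibrated set) ``pins down $c$ uniquely'' also when $E$ is a Cantor-like lamination. But knowing a closed $1$-form, as a field of covectors, on a closed set that may be nowhere dense and of Lebesgue measure zero (which is exactly what happens in case (CB) of the paper) does not determine its cohomology class: a priori two distinct classes $c_0\neq c_1$ in $\partial\sig_F(\xi)$ could both admit representatives $\omega_i\le F$ agreeing on $E$, and ``continuity of the Legendre transform'' gives no mechanism to exclude this. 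Ruling this out is precisely the content of the theorem, and it is where Mather's and Bangert's arguments do their work (ordering/crossing estimates for minimizers, approximation of the irrational direction by periodic minimizers with uniform control, resp.\ Busemann-function techniques); your proposal replaces this step by an assertion.

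Two further points. The pointwise identity $\omega(\dot\g)=F(\dot\g)$ along \emph{every} minimal geodesic with direction $\xi$ is not justified as stated: for closed minimizers it follows from $\int_\g(F-\omega)=\sig_F(z)-c\cdot z=0$, and for recurrent minimizers one gets it via minimizing measures (any measure with rotation vector in $\R_{>0}\xi$ is calibrated by every subgradient), but the naive ``minimality plus $\omega\le F$'' argument only yields a vanishing time average, and for the non-recurrent heteroclinics in $\M(z)\setminus\M^{per}(z)$ the pointwise identity can fail for some supporting classes --- fortunately you only need it on the recurrent part, but the argument should say so. Finally, for the ``only if'' direction of (ii) you explicitly stop at the construction of the corrector $\phi_t$ and call it the main obstacle; that construction (equivalently, showing that the one-sided derivatives of $\sig_F$ at $z$ in the directions $\pm z^\perp$ genuinely differ when there is a gap, by exploiting the slack $F-\omega_0$ along the heteroclinics filling the gap) is the heart of this implication in \cite{mather1}, \cite{bangert1}, so as written this direction is not proven. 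The duality fact you start from (subgradients of $\sig_F$ at $\xi$ correspond to classes of closed $1$-forms $\omega\le F$ touching at $\xi$) is a correct and citable input (cf.\ \cite{massart-thesis}, \cite{cipp}), so the framework is sound; what is missing is the quantitative core in both the irrational case and the rational ``only if'' case.
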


\begin{remark}
 There is a well-known rigidity phenomenon. Let us write
 \[ S\T=\{F=1\}\subset T\T \]
 for the unit tangent bundle and
 \[ \phi_F^t:S\T\to S\T \]
 for the geodesic flow of $F$. Then $\sig_F:\R^2-\{0\}\to\R$ is $C^1$ if and only if the geodesic flow $\phi_F^t$ of $F$ is $C^0$-integrable in $S\T$, that is, $S\T$ is $C^0$-foliated by invariant graphs \cite{massart-sorrentino}. In the Riemannian case, the $C^0$-integrability of $\phi_F^t$ is equivalent to the flatness of the metric $F$ by a classical result of E. Hopf \cite{hopf}.
 
 In general, two Finsler or Riemannian metrics with the same stable norm need not be isometric. See also the discussion in Section 6 of \cite{bangert1}.
\end{remark}

Theorem \ref{thm intro mather-bangert} \eqref{thm intro mather-bangert rat} suggests a deeper relationship between the structure of the geodesic flow $\phi_F^t:S\T\to S\T$ and the stable norm $\sig_F$. As $\sig_F$ is defined in terms of minimizers of a variational functional, we will look for relations to the minimal geodesics of $F$. Recall that a {\em minimal geodesic} is a geodesic $c:\R\to\T$ with the property that the lifts $\tilde c:\R\to\R^2$ to the universal cover minimize the length between any two of their points. Writing $d_F$ for the (in general non-symmetric) distance induced by the length $l_F$ on the universal cover $\R^2$, this means for the lifts $\tilde c$, that
\[ l_F(\tilde c;[a,b]) = d_F(\tilde c(a),\tilde c(b)) \qquad \forall a\leq b . \]
For a point $\xi\in S^1$ we define
\[ \M(\xi)\subset S\T \]
to be the set of initial conditions of minimal geodesics $c:\R\to\T$ with asymptotic direction $\delta^+(c)=\xi$, where
\[ \delta^+(c) := \lim_{t\to\infty}\frac{\tilde c(t)}{|\tilde c(t)|} , \]
$\tilde c:\R\to\R^2$ being any lift of $c$ and $|.|$ the euclidean norm on $\R^2$. It is known that the above limit exists for all minimal geodesics. We shall write $\M(a\cdot \xi)=\M(\xi)$ for $a>0$. It is furthermore known that the shortest closed geodesics in the definition of $\sig_F(z)$ lie in the set $\M(z)$ for $z\in\Z^2$. For these and more facts we refer to \cite{hedlund} and \cite{bangert} in the Riemannian case and to \cite{zaustinsky} and \cite{paper1} for the general Finsler case.

Motivated by Theorem \ref{thm intro mather-bangert}, we state the following problem.

\begin{prob}
 Relate the properties of $\sig_F$ at a given point $\xi\in \R^2-\{0\}$ to the structure of the set $\M(\xi)\subset S\T$.
\end{prob}

A particularly nice structure of $\M(\xi)$ would be that it is a KAM-torus.

\begin{defn}\label{def KAM intro}
 Let $\phi^t:X\to X$ be a $C^\infty$-flow on a $C^\infty$-manifold $X$. A {\em $C^k$-KAM-torus} (of dimension $n$) is a $C^k$-submanifold $\TT\subset X$, such that
 \begin{enumerate}[(i)]
  \item $\TT$ is invariant under $\phi^t$,
  
  \item there exists a $C^k$-diffeomorphism $\TT\to \mathbb{T}^n = \R^n/\Z^n$ conjugating $\phi^t|_\TT$ to a linear flow $\psi^t$ on $\mathbb{T}^n$ of the form
  \[ \psi^t(x) = x+t\rho \mod \Z^n . \]
 \end{enumerate}
\end{defn}

In our case $X=S\T$ and $\phi^t=\phi_F^t$ we consider only KAM-tori of dimension $n=2$ (half the dimension of the symplectic manifold $T^*\T$). 

Another possible structure of $\M(\xi)$ would be hyperbolicity.

\begin{defn}\label{def hyp intro}
 Let $\phi^t:X\to X$ be a $C^\infty$-flow on a $C^\infty$-manifold $X$. A subset $\Lambda\subset X$ is {\em uniformly hyperbolic} for $\phi^t$, if there exist constants $C,\lam>0$ and distributions $\{E^s(x)\}_{x\in\Lambda},\{E^u(x)\}_{x\in\Lambda}$, such that
 \begin{enumerate}[(i)]
  \item $\Lambda$ is compact and $\phi^t$-invariant,
  
  \item the distributions are $\phi^t$-invariant:
  \[ D\phi^t(x)E^s(x) = E^s(\phi^t x) , \qquad  D\phi^t(x)E^u(x) = E^u(\phi^t x) , \]
  
  \item the distributions together with the flow direction span the tangent \linebreak spaces:
  \[ T_xX = \R\cdot (\tfrac{d}{dt}\big|_{t=0}\phi^tx) \oplus E^s(x) \oplus E^u(x) , \]
  
  \item with respect to some Riemannian metric on $X$ we have contraction:
  \begin{align*}
 & \| D\phi^t(x) v \| \leq C \cdot \exp(-\lam t) \cdot \|v\| \qquad \forall  t\geq 0, v\in E^s(x) , \\
 & \| D\phi^{-t}(x) v \| \leq C \cdot \exp(-\lam t) \cdot \|v\| \qquad \forall  t\geq 0, v\in E^u(x) .
\end{align*}
 \end{enumerate}
\end{defn}

\pagebreak

We can now state our main result concerning the structure of the stable norm at points of irrational slope.

\begin{mainthm}\label{main thm intro irrat}
 Let $F$ be any Finsler metric on $\T$ with stable norm $\sig_F$ and let $\xi \in \R^2-\{0\}$ have irrational slope $\xi_2/\xi_1\in \R-\Q$.
 \begin{enumerate}[(i)]
  \item\label{main irrat i} If the set $\M(\xi)\subset S\T$ is a $C^3$-KAM-torus for the geodesic flow $\phi_F^t$, then the square of the stable norm $\sig_F$ is strongly convex near $\xi$. More precisely, there exists a constant $C>0$, such that Mather's $\beta$-function
 \[ \beta_F = \frac{1}{2} \sig_F^2 \]
 satisfies for all $v\in\R^2$ the estimate
 \begin{align*}
   \beta_F(\xi+v) - \beta_F( \xi) - D\beta_F( \xi)[ v]\geq C \cdot| v|^2 .
 \end{align*}
 
 \item\label{main irrat ii} Suppose that in each non-trivial free homotopy class of $\T$, there exists only one shortest closed $F$-geodesic. If the set $\M(\xi)\subset S\T$ is uniformly hyperbolic for the geodesic flow $\phi_F^t$, then the stable norm $\sig_F$ is exponentially flat near $\xi$. More precisely, there exist constants $C,\lam>0$, such that in all choices of rays $R \subset \R^2$ emanating from the origin there exist sequences $v_n\to 0, v_n\neq 0$, so that
 \begin{align*}
  & \sig_F(\xi+v_n) - \sig_F(\xi) - D\sig_F(\xi)[v_n] \leq |v_n|^{1/4} \cdot C \cdot \exp\left(-\lam \cdot \frac{1}{| v_n |^{1/4}} \right) .
 \end{align*}
 \end{enumerate}
\end{mainthm}

\begin{remark}
 Note that the differentiability of $\sig_F$ and $\beta_F$ at $\xi$ follows directly from Theorem \ref{thm intro mather-bangert} \eqref{thm intro mather-bangert irrat}. In Main Theorem \ref{main thm intro irrat} \eqref{main irrat i}, we will also show that for some $C'>0$ and all $v$
 \[ \beta_F(\xi+v) - \beta_F( \xi) - D\beta_F( \xi)[ v] \leq C' \cdot |v|^2 . \]
 As to part \eqref{main irrat ii}, note that by convexity we have
 \begin{align*}
  & 0 \leq \sig_F(\xi+v) - \sig_F(\xi) -D\sig_F(\xi)[v] .
 \end{align*}
 The function
 \[ t> 0 \quad \mapsto \quad t^{1/4} \cdot C \cdot \exp\left(-\lam \cdot \frac{1}{t^{1/4}} \right) \]
 vanishes in $t=0$ to infinite order.
\end{remark}

If one draws the unit circle of $\sig_F$ with a computer, it looks like a straight line near $\xi$, if $\M(\xi)$ is hyperbolic. If, on the other hand $\M(\xi)$ is a KAM-torus, it will look like a parabola as in the euclidean (integrable) case. We will give some examples of Finsler metrics together with their stable norms in Section \ref{section examples} below. Intuitively, one sees that when perturbing the euclidean metric with the stable norm being again the euclidean metric, the convexity of the stable norm moves notably into vertices at rational directions, while at directions irrational slope the unit circle looks more and more like a straight line. Indeed, in Figure \ref{fig_rivin} below the unit circle of the stable norm in the case of a hyperbolic metric on the (punctured) torus looks polygonal, even though it is strictly convex.

\pagebreak

Theorem \ref{thm intro mather-bangert} \eqref{thm intro mather-bangert rat} can be rephrased as
\begin{itemize}
 \item $\sig_F$ is differentiable at a point $\xi\in \R^2-\{0\}$ of rational slope if and only if the set $\M(\xi)$ is a $C^0$-KAM-torus.
\end{itemize}
The following problem arises.

\begin{prob}\label{quest irrat}
 Give a criterion on the stable norm $\sig_F$ near a given irrational direction $\xi \in \R^2-\{0\}$, which is equivalent to the case where the set $\M(\xi)$ is a KAM-torus for the geodesic flow $\phi_F^t$.
\end{prob}

We saw in Theorem \ref{thm intro mather-bangert} that the answer in the rational case was given in terms of the differentiability of $\sig_F$, while for the irrational case this is not possible since $\sig_F$ is always differentiable here. In this light, Main Theorem \ref{main thm intro irrat} partially answers Problem \ref{quest irrat} in terms of the flatness properties of $\sig_F$ near $\xi$. However, there are several issues to discuss.

Let us start with the condition in Main Theorem \ref{main thm intro irrat} \eqref{main irrat ii}, that each free homotopy class contains only one shortest closed geodesic. This is certainly not fulfilled for every Finsler metric.

\begin{defn}\label{def generic}
 A property of Finsler metrics is said to be {\em conformally generic} if, given an arbitrary Finsler metric $F_0$ on $\T$, the property holds for all Finsler metrics $F$ of the form
 \[ F(x,v) = f(x) \cdot F_0(x,v) \]
 with $f$ belonging to a residual subset of
 \[ \{ f: \T\to \R : \text{$f>0$ everywhere and $f$ is $C^\infty$} \} \]
 in the $C^\infty$-topology. Here, a residual set in a topological space is a countable intersection of open and dense subsets.
\end{defn}

In the Lagrangian setting, the above notion of genericity is related via Maupertuis' principle to R. \mane's way of perturbing a Tonelli Lagrangian $L_0$ by a potential into $L(x,v)=L_0(x,v)+f(x)$, see \cite{mane}.

The next proposition is proved in \cite{generic-paper}.

\begin{prop}\label{generic-paper result}
 The property to admit only one shortest closed geodesic in each free homotopy class is conformally generic.
\end{prop}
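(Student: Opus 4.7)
I would run a standard Baire / \mane-type argument. Write $\mathcal F=\{f\in C^\infty(\T):f>0\}$ with the $C^\infty$-topology, which is a Baire space. For each $z\in\Z^2-\{0\}$ and each $n\in\N$ set
\[
U_z^n:=\{f\in\mathcal F:\text{any two shortest closed }(fF_0)\text{-geodesics in class }z\text{ have Hausdorff distance }<1/n\}.
\]
If every $U_z^n$ is open and dense in $\mathcal F$, then the countable intersection $\bigcap_{z,n}U_z^n$ is residual, and a function $f$ lies in it if and only if every non-trivial free homotopy class of $\T$ contains a unique shortest closed $(fF_0)$-geodesic, which is the claim.

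\textbf{Openness.} Suppose $f_k\to f$ in $C^\infty$ with each $f_kF_0$ admitting two shortest closed geodesics $\gamma_k^{(1)},\gamma_k^{(2)}$ in class $z$ at Hausdorff distance $\geq1/n$. Continuity of the stable norm in the metric (a standard consequence of lower semi-continuity of length and the trial curve argument) gives $\sig_{f_kF_0}(z)\to\sig_{fF_0}(z)$ and thus a uniform bound on the Finsler lengths $l_{f_kF_0}(\gamma_k^{(i)})$; parametrizing by Finsler arc-length, Arzela--Ascoli extracts subsequential $C^0$-limits, which must themselves be shortest closed $(fF_0)$-geodesics in class $z$. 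Passing to limits of both sequences produces two $(fF_0)$-minimizers at Hausdorff distance $\geq1/n$, so $f\notin U_z^n$, proving openness.

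\textbf{Density.} Fix $f_0\in\mathcal F$ and $\varepsilon>0$, and let $\M^*$ denote the (compact) set of shortest closed $(f_0F_0)$-geodesics in class $z$, viewed as subsets of $\T$. If $f_0\notin U_z^n$, choose $A,B\in\M^*$ with $d_H(A,B)\geq1/n$, a point $p\in A$ with $\dist(p,B)>0$, and a small open ball $V\ni p$ disjoint from $B$. For a non-negative bump $h\in C^\infty_c(V)$ with $\|h\|_{C^\infty}$ small, set $f:=f_0(1-h)$; then the $(fF_0)$-length of $B$ equals its $(f_0F_0)$-length, while the $(fF_0)$-length of $A$ strictly decreases, so $B$ is no longer a minimizer for $f$. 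Iterating eliminates every curve in the minimizer set sitting far from some central cluster.

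\textbf{Main obstacle.} The technical heart is that each perturbation may promote a previously non-minimal closed curve in class $z$ to a new $(fF_0)$-minimizer, potentially far from the cluster we are trying to isolate. To control this, one uses the strictly positive gap between $\sig_{f_0F_0}(z)$ and the infimum of $(f_0F_0)$-lengths of closed curves in class $z$ at Hausdorff distance $\geq1/n$ from $\M^*$; positivity of this gap follows from Arzela--Ascoli compactness of the set of curves of bounded length in class $z$. Choosing $\|h\|_{C^\infty}$ less than a fraction of this gap at each step, and summing geometrically over finitely many iterations of the above local perturbation, produces $f\in U_z^n$ with $\|f-f_0\|_{C^\infty}<\varepsilon$. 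Making these quantitative estimates uniform enough for the iteration to terminate in finitely many steps is the main work in \cite{generic-paper}.
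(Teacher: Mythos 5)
First, a caveat: the paper itself contains no proof of Proposition~\ref{generic-paper result} --- it is imported wholesale from \cite{generic-paper} --- so there is no in-paper argument to measure yours against, only what a complete argument must contain. Judged on its own terms, your Baire scheme and the openness half are fine: openness of $U_z^n$ via continuity of $f\mapsto\sig_{fF_0}(z)$ and an Arzel\`a--Ascoli limit of minimizers is standard, and the countable intersection does characterize uniqueness (modulo the small remarks that Hausdorff distance zero between images of two $z$-periodic minimizers forces them to agree up to a time shift, and that non-prime classes reduce to prime ones by Theorem~\ref{morse periodic}~\eqref{morse periodic item 1}).

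The genuine gap is in the density step. The set of shortest closed geodesics in a class $z$ can be an uncountable lamination --- for the flat metric it is a foliation of $\T$ --- so an elimination scheme that kills one far-away minimizer per bump has no reason to terminate, and your ``main obstacle'' paragraph only asserts, without exhibiting a mechanism, that finitely many iterations with geometrically summable perturbations suffice. Moreover, after a bump supported near a single point $p\in A$, the new minimizers are merely almost-minimizers for $f_0F_0$ passing through the small ball $V$; such curves need not be Hausdorff-close to $A$, because a neighbouring leaf of the lamination may pass arbitrarily close to $p$ while lying far from $A$ along the rest of its length, and an almost-minimizer may shadow that leaf --- so even a single well-placed point bump does not obviously land you in $U_z^n$, and your gap estimate (distance to the whole set $\M^*$) only controls proximity to \emph{some} old minimizer, not mutual proximity of the new ones. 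The standard way to close this is different: decrease $f_0$ by a definite conformal factor on an entire $\delta$-tubular neighbourhood of one chosen shortest geodesic $\gamma_0$, not at a point. Then every class-$z$ curve avoiding the tube keeps its old length while $\gamma_0$ loses a fixed proportion, and a short length bookkeeping shows that every new minimizer spends all but an $O(\epsilon)$-fraction of its length inside the tube; since two homotopically non-trivial closed curves confined (up to short excursions) to an annulus of width comparable to $\delta$ around $\gamma_0$ are Hausdorff-close to \emph{each other}, a single $C^\infty$-small perturbation already puts you in $U_z^n$. Without this, or some comparable replacement for the iteration, the density half of your proposal is a plan rather than a proof.
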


This shows that Main Theorem \ref{main thm intro irrat} applies to ``most'' Finsler metrics on $\T$ without the extra condition in item \eqref{main irrat ii}.

Next, let us see, what alternatives there are for the structure of $\M(\xi)$:
\begin{enumerate}[(A)]
 \item $\M(\xi)$ is a $C^3$-KAM-torus for $\phi_F^t$,
  
 \item $\M(\xi)$ is uniformly hyperbolic for $\phi_F^t$,
 
 \item none of the above two.
\end{enumerate}

First we note that case (A) occurs frequently by KAM-theory, if the Finsler metric $F$ is close to one with an integrable geodesic flow (e.g.\ the euclidean metric), see e.g.\ \cite{moser}, while we do not attempt to give a full overview on the literature on KAM-theory. The reason for us to use $C^3$-regularity is given in Remark \ref{remark kam torus c3}.

We will prove the following proposition concerning case (B).

\begin{prop}\label{prop lecalvez intro}
 The following property of Finsler metrics on $\T$ is conformally generic:
 \begin{itemize}
  \item For an open and dense subset $U\subset S^1$, the sets $\M(\xi)$ are uniformly hyperbolic for all $\xi\in U$. The set $U$ strictly contains all $\xi\in S^1$ with rational or infinite slope.
 \end{itemize}
\end{prop}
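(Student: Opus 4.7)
The plan is to show, conformally generically, that (i) at every direction $\xi\in S^1$ of rational or infinite slope the set $\M(\xi)\subset S\T$ is uniformly hyperbolic for $\phi_F^t$, and (ii) hyperbolicity of $\M(\xi_0)$ propagates to an open neighborhood of $\xi_0$ in $S^1$. Granting both, the set
\[ U := \{\xi\in S^1 : \M(\xi)\text{ is uniformly hyperbolic}\} \]
is open by (ii) and contains every rational and infinite-slope direction by (i); since such directions are dense in $S^1$, $U$ is dense. Finally, $U$ is a non-empty open subset of $S^1$ while the rational and infinite-slope directions form a countable set, so $U$ must strictly contain them.

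For (i), fix such a $\xi$ and let $z$ be a primitive element of $\Z^2 \cap \R_{>0}\xi$. Proposition \ref{generic-paper result} allows us to assume, conformally generically, that the shortest closed $F$-geodesic $\g_z$ of class $z$ is unique. A further conformal perturbation supported in a small neighborhood of $\g_z$ should render $\g_z$ hyperbolic in its two-dimensional transverse direction; this is a local \mane-type perturbation argument of the same flavor as the proof of Proposition \ref{generic-paper result}. A second conformal perturbation supported on $\T\setminus\g_z$ can then be used to open up a transverse homoclinic intersection of the stable and unstable manifolds of $\g_z$. By the structure results of Hedlund and Bangert (see \cite{hedlund}, \cite{bangert}, \cite{paper1}), $\M(\xi)$ then coincides with the union of $\g_z$ and its homoclinic orbits, so it is a compact $\phi_F^t$-invariant transversely homoclinic set, and therefore uniformly hyperbolic. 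A countable Baire argument over the countably many primitive classes $z\in\Z^2$ produces a single conformally residual class in which every rational or infinite-slope direction is simultaneously hyperbolic.

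For (ii), assume $\M(\xi_0)\subset\Lambda$ for a uniformly hyperbolic $\phi_F^t$-invariant compact set $\Lambda\subset S\T$. Upper semi-continuity of $\xi\mapsto\M(\xi)$ -- which follows from the fact that a $C^0$-limit of minimizers is itself a minimizer, a standard Tonelli-type argument -- yields an open neighborhood $V\subset S^1$ of $\xi_0$ with $\M(\xi)\subset\Lambda$ for all $\xi\in V$, so that $\M(\xi)$ inherits uniform hyperbolicity. The principal obstacle is step (i): carrying out both the hyperbolization of $\g_z$ and the creation of a transverse homoclinic intersection using only \emph{conformal} perturbations $F = f\cdot F_0$, rather than arbitrary Finsler perturbations. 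One must check that multiplicative perturbations act on the linearized Poincar\'e return map of $\g_z$ with enough freedom to open a hyperbolic eigenvalue and to break homoclinic tangencies, while preserving the uniqueness already established in the other rational classes. Once this local analysis is in place, the conclusion of the proposition follows from the topological assembly of paragraph one.
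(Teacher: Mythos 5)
Your overall architecture (generic uniqueness and hyperbolicity at rational directions, then propagation to an open neighborhood via semi-continuity) is the same as the paper's, but step (i) has two genuine gaps. First, the heart of the matter is exactly the point you defer: that \emph{conformal} perturbations $F=f\cdot F_0$ suffice, generically, to make the unique shortest closed geodesic in every class hyperbolic \emph{and} to make its stable and unstable manifolds intersect transversely. The paper does not improvise this; it invokes a Kupka--Smale theorem for conformal/Ma\~n\'e-type perturbations (Theorem \ref{thm kupka-smale}, a variant of Theorem D of Contreras--Iturriaga, transported via Maupertuis' principle by replacing $L+\phi$ with $\phi\cdot L$). Your sketch (``a further conformal perturbation \emph{should} render $\g_z$ hyperbolic\dots one must check\dots'') acknowledges but does not close this gap, and without it nothing downstream works. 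Second, even granting the Kupka--Smale property, your inference ``$\M(\xi)$ is the union of $\g_z$ and its homoclinic orbits, hence a transversely homoclinic set, therefore uniformly hyperbolic'' is not valid as stated: a union of a hyperbolic closed orbit with orbits lying in its invariant manifolds is not automatically a uniformly hyperbolic \emph{set}. The paper's argument (following LeCalvez, Theorem \ref{thm lecalvez rational}) needs two extra steps: the $\lam$-Lemma to see that the closure of each minimal heteroclinic orbit in $\M(\xi)-\M^{per}(\xi)$ is hyperbolic, and then the transversality of $W^s\cap W^u$ to conclude that $\M(\xi)-\M^{per}(\xi)$ consists of only \emph{finitely many} orbits; it is this finiteness that yields uniform hyperbolicity of the whole compact set. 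You skip both.

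Step (ii) is the right idea but is stated incorrectly: from ``$\M(\xi_0)\subset\Lambda$ with $\Lambda$ hyperbolic'' upper semi-continuity of $\xi\mapsto\M(\xi)$ (Lemma \ref{M upper semi-cont}) does \emph{not} give $\M(\xi)\subset\Lambda$ for nearby $\xi$ (take $\Lambda=\M(\xi_0)$ itself). One must first use the persistence statement (Proposition 6.4.6 in \cite{KH}) to choose a neighborhood $U\supset\M(\xi_0)$ whose maximal invariant set $\Lambda=\bigcap_t\phi_F^t(\overline U)$ is hyperbolic; then semi-continuity puts $\M(\xi)$ inside $U$, and flow-invariance puts it inside $\Lambda$. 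This is exactly how the paper argues, so the fix is minor; the substantive missing ingredients are the conformal Kupka--Smale theorem and the LeCalvez finiteness argument in step (i).
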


Put together, one expects that cases (A) and (B) occur quite frequently. Aiming at Problem \ref{quest irrat}, for conformally generic Finsler metrics, Main Theorem \ref{main thm intro irrat} tells us when we are in one of the cases (A) or (B), while it is not able to distinguish case (C).

Let us have a brief look at case (C). This case could be quite subtle and will be left for future research. See also the discussion in Section 10 of \cite{mckay}. This case contains the following situations:
\begin{itemize}
 \item[(CA)] $\M(\xi)$ is a $C^0$-KAM-torus, but not a $C^3$-KAM-torus,
 
 \item[(CB)] $\M(\xi)$ is not a $C^0$-KAM-torus, but also not uniformly hyperbolic.
\end{itemize}
As case (C) is excluded for rational $\xi$, if the Finsler metric is chosen generically (Proposition \ref{prop lecalvez intro}), let us assume that $\xi$ has irrational slope. One might expect that case (CA) can be treated as a generalization of case (A) with some degeneracy to be expected. In case (CB) the set $\pi(\M^{rec}(\xi))$ of recurrent minimal geodesics projected to $\T$ is nowhere dense in $\T$ \cite{bangert}. This case occurs for generic Finsler metrics, fixing $\xi\in S^1$ with slope $\xi_2/\xi_1$ a Liouville number, see \cite{mather2} (for Diophantine numbers, KAM-tori can occur by KAM-theory). Here, it is known that homoclinic behavior of geodesics close to $\M^{rec}(\xi)$ occurs, i.e.\ one could expect some hyperbolicity. However, it is still possible that $\pi(\M(\xi))=\T$; also, one can have vanishing or non-vanishing Lyapunov exponents (non-uniform hyperbolicity). All these topics will not be treated here; let us in this connection only refer to the work of M. C. Arnaud: \cite{arnaud}, \cite{arnaud-example}, \cite{arnaud non-hyp}.

\abs

Finally, we will prove a theorem on the structure of $\sig_F$ in rational directions. Recall that generically the set $\M(\xi)$ is hyperbolic for all $\xi$ with rational or infinite slope, see Proposition \ref{prop lecalvez intro}. In this case, we can sharpen Theorem \ref{thm intro mather-bangert} \eqref{thm intro mather-bangert rat}, obtaining an estimate analogous to Main Theorem \ref{main thm intro irrat} \eqref{main irrat ii}. We write
\[ D^+\sig_F(\xi)[v] :=  \inf_{t>0} \frac{\sig_F(\xi+tv)-\sig_F(\xi)}{t} = \lim_{t\searrow 0} \frac{\sig_F(\xi+tv)-\sig_F(\xi)}{t} \]
for the forward directional derivative of $\sig_F$. The second equality holds due to convexity.

\begin{thm}\label{intro rational hyp}
 Let $\xi\in \R^2 - \{0\}$ with rational or infinite slope. Assume that the set of periodic minimal geodesics $\M^{per}(\xi)\subset S\T$ is uniformly hyperbolic for the geodesic flow $\phi_F^t$. Then there exist constants $C,\lam, \e>0$, such that for all $v\in \R^2$ with euclidean norm $|v|\leq \e$
 \begin{align*}
  & \sig_F(\xi+v) - \sig_F(\xi) - D^+\sig_F(\xi)[v] \leq |v| \cdot C \cdot \exp\left( -\lam \cdot \frac{1}{|v|} \right) .
 \end{align*}
\end{thm}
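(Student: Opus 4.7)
The plan is to mimic the strategy of Main Theorem \ref{main thm intro irrat} \eqref{main irrat ii}, with the simplification that in the present rational setting the hyperbolic set $\M^{per}(\xi)$ is a finite union of closed $F$-geodesics. This makes the shadowing argument cleaner and allows the estimate to hold for all small $v$ rather than only along a sequence. Let $z\in\Z^2$ be primitive with $\xi\in\R_{>0}z$ and choose $z'\in\Z^2$ primitive so that $\{z,z'\}$ is a basis of $\Z^2$. By positive homogeneity of $\sig_F$, and in view of the identity $\sig_F((1+\alpha)z+\beta z')=(1+\alpha)\sig_F(z+\tfrac{\beta}{1+\alpha}z')$ valid for $1+\alpha>0$, it suffices to prove the desired estimate for $v=\beta z'$ with $|\beta|$ small; the general case $v=\alpha z+\beta z'$ then follows since $|v|$ and $|\beta/(1+\alpha)|$ are comparable for small $v$.

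As a preliminary I identify the one-sided directional derivatives
\[
L^{\pm} := D^+\sig_F(\xi)[\pm z']
\]
with the infimum of $F$-lengths of heteroclinic excursions whose lifts in $\R^2$ shift the lift of a periodic orbit in $\M^{per}(\xi)$ by $\pm z'$. This is a Mather-type duality at the corner $\xi$: by Theorem \ref{thm intro mather-bangert} \eqref{thm intro mather-bangert rat}, hyperbolicity of $\M^{per}(\xi)$ forbids a foliation by shortest closed geodesics, so $\sig_F$ fails to be differentiable at $\xi$, and its subdifferential $\partial\sig_F(\xi)\subset\R^2$ is a nondegenerate segment whose two extreme points encode $L^+$ and $-L^-$. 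The main construction then exploits uniform hyperbolicity: the stable and unstable manifolds of the periodic orbits in $\M^{per}(\xi)$ intersect and furnish heteroclinic orbits of $F$-length arbitrarily close to $L^+$ that realize the shift $z'$ (and similarly for $-z'$). For positive integers $k\leq N$, build a closed pseudo-orbit by traversing the periodic orbits in $\M^{per}(\xi)$ for a total of $N$ loops, with $k$ heteroclinic $z'$-excursions spaced evenly along the way. The Anosov closing / shadowing lemma produces a genuine closed $F$-geodesic $c_{N,k}$ of homotopy class exactly $Nz+kz'$. Because each excursion is bracketed by $\sim N/k$ full loops along a hyperbolic periodic orbit, during which the pseudo-orbit is tracked with $C^1$-error contracting exponentially at rate $\lam$, the length deficit of $c_{N,k}$ relative to the pseudo-orbit decays like $\exp(-\lam N/k)$, yielding
\[
l_F(c_{N,k}) \leq N\sig_F(z) + kL^+ + C'k\exp(-\lam N/k)
\]
for some constant $C'>0$ depending only on $F$ and $\M^{per}(\xi)$.

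For $v=(k/N)z'$ with $k,N\in\N$, positive homogeneity of $\sig_F$ now gives
\[
\sig_F(\xi+v) - \sig_F(\xi) - D^+\sig_F(\xi)[v] \leq \tfrac{1}{N}l_F(c_{N,k}) - \sig_F(\xi) - \tfrac{k}{N}L^+ \leq \tfrac{C'k}{N}\exp(-\lam N/k) ,
\]
and since $|v|$ is comparable to $k/N$, this is the target bound along the dense rational subset of $\R_{>0}z'$. The case $\beta<0$ is symmetric, using the $-z'$-excursions. The inequality extends from this dense subset to all $v=\beta z'$ with $|\beta|\leq\e$ because both sides are continuous in $v$: $\sig_F$ is Lipschitz, and $D^+\sig_F(\xi)[\cdot]$ is continuous as the support function of the compact convex set $\partial\sig_F(\xi)$. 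The general case $v=\alpha z+\beta z'$ then follows by the homogeneity reduction of the first paragraph.

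The main obstacle is the quantitative shadowing step: upgrading the qualitative Anosov closing lemma to a length bound with the precise exponential factor $\exp(-\lam N/k)$, while carrying out the combinatorial bookkeeping to ensure that the shadowing closed geodesic realizes homotopy class exactly $Nz+kz'$ (rather than, say, $(N\pm 1)z+kz'$, which would ruin the rescaling). A secondary technical point is the identification of $L^{\pm}$ with the minimal heteroclinic costs, which requires a Mather duality argument specific to the corner geometry of $\sig_F$ on the two-dimensional torus.
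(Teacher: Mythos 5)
Your overall architecture --- reduce to $v$ transverse to $\xi$, build closed curves in the classes $Nz+kz'$ out of long stretches along the periodic minimals joined by heteroclinic excursions, use hyperbolicity to make the joining errors exponentially small in the spacing $N/k$, then divide by $N$ and pass to all small $v$ by homogeneity and continuity --- is essentially the paper's (Lemmas \ref{lemma stable norm nz pm z perp}, \ref{lemma stable norm nz + m z perp}, \ref{lemma diff sig radially}, Theorem \ref{thm rational}). But there is a genuine gap at the step you dismiss as secondary: the identification of $D^+\sig_F(\xi)[\pm z']$ with the infimal cost of the heteroclinic excursions. For your upper bound you need this identification in the hard direction, namely that the total renormalized length of the chain of heteroclinics realizing one $z'$-shift (the quantity $\sum_i T_i$ in the paper) is at most $D^+\sig_F(\xi)[z']$; the opposite inequality follows from your own construction, but without the hard direction your estimate only produces a linear term $k\cdot(\text{total heteroclinic cost})$, which could a priori exceed $k\,D^+\sig_F(\xi)[z']$, and the claimed bound on the excess does not follow. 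The paper proves this inequality by a concrete comparison: take genuine $(nz+z^\perp)$-periodic minimizers, cut them at their intersections with the periodic minimals into $k(z)$ heteroclinic-type arcs, compare each arc with the minimal heteroclinics of Theorem \ref{morse periodic} \eqref{morse periodic item 3}, and use $D^+\sig(z)[z^\perp]=\lim_{n}(\sig(nz+z^\perp)-\sig(nz))$; this occupies the second half of the proof of Lemma \ref{lemma stable norm nz pm z perp} and cannot be replaced by an appeal to ``Mather duality at the corner''. A related bookkeeping point: one $z'$-shift generically requires a chain of $k(z)$ heteroclinic jumps (one per gap between neighboring periodic minimals), each needing its own long periodic stretch, so the exponent is of size $N/(k\,k(z))$ rather than $N/k$; under your hypothesis $k(z)$ is finite, so this only changes constants, but it must appear in the construction.

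Conversely, the obstacle you single out as ``main'' --- a quantitative Anosov closing lemma with control of the homotopy class --- is unnecessary. Since $\sig_F(w)$ is an infimum of lengths over all closed curves in the class $w$, you never need a genuine closed geodesic: the concatenated test curve itself, consisting of truncated heteroclinic pieces joined by short minimal segments whose lengths are bounded by $C\exp(-\lam\cdot(\text{spacing}))$ via \eqref{eq hyperbolicity}, already bounds $\sig_F(Nz+kz')$ from above. This is exactly how the paper proceeds; it removes the shadowing difficulty entirely, and combined with the radial decomposition of Lemma \ref{lemma diff sig radially} and a limiting argument in the transverse parameter (as in Lemma \ref{lemma stable norm nz + m z perp}) it yields the estimate for every sufficiently small $v$, not just along a dense set that must afterwards be upgraded by continuity.
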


\begin{remark}
 Note that by convexity we have for all $v\in\R^2$
 \begin{align*}
  & 0 \leq \sig_F(\xi+v) - \sig_F(\xi) -D^+\sig_F(\xi)[v] .
 \end{align*}
 Also note that the function
 \[ t> 0 \quad \mapsto \quad t\cdot C \cdot \exp\left(-\lam \cdot \frac{1}{t} \right) \]
 vanishes in $t=0$ to infinite order.
\end{remark}

Note that, intuitively, there are relations of Theorem \ref{intro rational hyp} and Main Theorem \ref{main thm intro irrat} \eqref{main irrat ii} to \cite{bressaus-quas} in the setting of ergodic optimization.

We close the introduction with a remark on the stable norm on higher genus surfaces.

\begin{remark}
 We saw that in the torus case, the stable norm contains much information on the dynamics of $\phi_F^t$. The natural question is, whether this is true also for higher genus surfaces. Here, there are results analogous to Theorem \ref{thm intro mather-bangert} due to D. Massart \cite{massart2} (note, however, the erratum \cite{massart-erratum}). In \cite{min_rays} the second author proves that a similar asymptotic object associated to $F$, namely the horofunction boundary is generically homeomorphic to that of a constant curvature metric. The following question should be an interesting topic for future research.
 
 {\bf Question.} Are the differentiability properties of the stable norm of a generic Finsler metric $F$ on a closed orientable surface $M$ of genus at least two the same as those of the stable norm of a constant curvature Riemannian metric?

\end{remark}

{\bf Structure of this paper.}
Main Theorem \ref{main thm intro irrat} \eqref{main irrat i} is proved in Section \ref{section kam-case}. The arguments for Main Theorem \ref{main thm intro irrat} \eqref{main irrat ii} and Theorem \ref{intro rational hyp} are contained in Section \ref{section hyp}. In Section \ref{section lecalvez}, we sketch the proof of Proposition \ref{prop lecalvez intro}; in Section \ref{section examples} we study some natural examples of Finsler metrics and their stable norms.

\section{The case of a KAM-torus}\label{section kam-case}

We fix the Finsler metric $F$. The associated sets $\M(\xi)$ can be seen as remnants of KAM-tori (recall Definition \ref{def KAM intro} for the definition of a KAM-torus). More precisely, if a KAM-torus $\TT\subset S\T$ is a Lipschitz graph over the base $\T$, then it is well-known (cf.\ Theorem 17.4 in \cite{mather-forni} or Section 3 in \cite{diss}) that $\TT\subset \M(\xi)$ for some $\xi$. In this section we fix $\xi\in \R^2-\{0\}$ and in order to prove Main Theorem \ref{main thm intro irrat} \eqref{main irrat i} we assume that
\begin{itemize}
 \item the set $\TT=\M(\xi)$ is a $C^k$-KAM-torus for the geodesic flow $\phi_F^t$, while $\phi_F^t|_\TT$ is conjugated via some diffeomorphism $\Phi:\TT\to\T$ to the linear flow $\psi^tx=x+t\rho$.
\end{itemize}
We shall call $\rho$ the {\em frequency vector} of $\M(\xi)$.

The aim of this section is to study the stable norm $\sig_F$ of $F$ close to $\xi$. We shall follow the ideas of K. F. Siburg, cf.\ \cite{siburg-paper} or Chapter 4 in \cite{siburg}. Note, however, that in our setting we do not need symplectic coordinate changes and the condition for our KAM-torus to be positive definite is fulfilled automatically, see below.

\begin{lemma}\label{lemma KAM-foliation}
 If the set $\M(\xi)$ is a $C^k$-KAM-torus for the geodesic flow $\phi_F^t$ as assumed above, then there exists a $C^k$-diffeomorphism $\vf:\T\to\T$, such that the push-forward $C^{k-1}$-Finsler metric $\vf_*F$ admits the straight lines
 \[ x + t \rho \mod \Z^2 \]
 as arc-length geodesics.
\end{lemma}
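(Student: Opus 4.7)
The plan is to build $\vf$ explicitly from the two pieces of data provided by the KAM-torus hypothesis: the graph structure of $\TT = \M(\xi)$ over the base $\T$, and the conjugacy $\Phi \colon \TT \to \T$ with the linear flow. First I would appeal to the classical Lipschitz-graph theorem for minimal geodesics on the torus, which says that the union of minimal orbits of a given asymptotic direction lies on a Lipschitz graph over $\T$ in $S\T$ (cf.\ the discussion opening this section, and \cite{hedlund}). Combined with the $C^k$-smoothness assumption on $\TT$, this upgrades to the statement that the base projection $p|_\TT \colon \TT \to \T$ is a $C^k$-diffeomorphism, with $C^k$ inverse $V := (p|_\TT)^{-1} \colon \T \to \TT \subset S\T$.

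Next I would set $\vf := \Phi \circ V \colon \T \to \T$, a $C^k$-diffeomorphism with inverse $\vf^{-1} = p \circ \Phi^{-1}$. The push-forward $\vf_* F$, given pointwise by $(\vf_* F)(y,w) = F(\vf^{-1}(y), D\vf^{-1}(y) w)$, is then $C^{k-1}$ because its definition uses one derivative of $\vf$.

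To see that $t \mapsto x + t\rho$ is an arc-length $\vf_* F$-geodesic for each $x$, I would unwind the conjugacy. From $\Phi \circ \phi_F^t = \psi^t \circ \Phi$ on $\TT$ one obtains
\[ \vf^{-1}(x + t\rho) = p\bigl(\Phi^{-1}(\psi^t x)\bigr) = p\bigl(\phi_F^t(\Phi^{-1}(x))\bigr) . \]
The right-hand side is the base projection of a $\phi_F^t$-orbit starting at $\Phi^{-1}(x) \in \TT \subset S\T$, hence an arc-length $F$-geodesic on $\T$. Applying the isometry $\vf \colon (\T, F) \to (\T, \vf_* F)$, the curve $t \mapsto x + t\rho$ becomes an arc-length geodesic of $\vf_* F$, which is precisely the conclusion of the lemma.

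The only genuinely non-formal input is the graph structure of $\TT$; once that is in hand, the rest is bookkeeping. I therefore expect the main conceptual point to be the justification that $\TT = \M(\xi)$, viewed merely as a $C^k$-KAM-torus in the sense of Definition \ref{def KAM intro}, is actually a $C^k$-graph over $\T$. This does not follow from the abstract definition of KAM-torus alone, but does follow from its identification with $\M(\xi)$ via the classical Lipschitz-graph result for minimal geodesics combined with the $C^k$-regularity hypothesis.
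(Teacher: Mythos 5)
Your proposal is correct and follows essentially the same route as the paper's proof: use the graph property of minimal geodesics to see that the projection $\pi|_{\TT}:\TT=\M(\xi)\to\T$ is a (bi-Lipschitz) homeomorphism, upgrade it to a $C^k$-diffeomorphism using the $C^k$-smoothness of $\TT$, define $\vf=\Phi\circ(\pi|_{\TT})^{-1}$, and unwind the conjugacy with the linear flow to identify the straight lines as arc-length geodesics of $\vf_*F$. The only difference is how the graph property is sourced: the paper observes that all orbits on the KAM-torus are recurrent and invokes the graph theorem for the recurrent set $\M^{rec}(\xi)$ (which also covers rational $\xi$, where the full set $\M(\xi)$ need not be a graph), whereas you cite the graph statement for all minimals of a fixed direction, which is valid in the irrational case relevant here and is likewise rescued by recurrence under the KAM hypothesis.
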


\begin{remark}\label{remark kam torus c3}
 For the push-forward $\vf_*F$ to be a Finsler metric, it should be at least $C^2$ away from the zero section, hence the KAM-torus in Lemma \ref{lemma KAM-foliation} should be at least $C^3$.
\end{remark}

\begin{proof}
 First we observe that the canonical projection $\pi|_{\M(\xi)}:\M(\xi) \to \T$ is a bi-Lipschitz homeomorphism. Indeed, by assumption all orbits in $\TT=\M(\xi)$ are recurrent under $\phi_F^t$, while it is known that $\pi$ restricted to the set of recurrent minimal geodesics $\M^{rec}(\xi)\subset \M(\xi)$ is a bi-Lipschitz homeomorphism onto its image in $\T$; for this let us refer to \cite{paper1}, in particular in the non-reversible Finsler case, while the extensive literature on the subject starts already with \cite{hedlund}. The claim follows.
 
 As $\pi:T\T\to\T$ is smooth and the $C^k$-submanifold $\M(\xi)$ is a Lipschitz-graph (the image of $\pi|_{\M(\xi)}^{-1}:\T\to T\T$), we find that $\pi|_{\M(\xi)}:\M(\xi)\to\T$ is a $C^k$-diffeomorphism. Consider the $C^k$-diffeomorphism $\Phi:\M(\xi)\to\T$ conjugating $\phi_F^t|_{\M(\xi)}$ to the linear flow $\psi^tx=x+t\rho$. Then for the geodesic $c_v:\R\to\T$ corresponding to $v\in \M(\xi)$ we find
 \begin{align*}
  c_v(t) & = \pi|_{\M(\xi)} \circ \phi_F^t(v) = \pi|_{\M(\xi)} \circ \Phi^{-1}\circ \psi^t \circ \Phi(v) \\
  & = \pi|_{\M(\xi)} \circ \Phi^{-1}( \Phi(v) + t \rho) .
 \end{align*}
 This shows that the $C^k$-diffeomorphism
 \[ \vf := \Phi \circ \pi|_{\M(\xi)}^{-1} : \T\to\T , \]
 sends the $F$-geodesics from $\M(\xi)$ to the desired straight lines.
\end{proof}

We write
\[ \hat F := \vf_*F :T\T\to\R \]
for the push-forward Finsler metric found in Lemma \ref{lemma KAM-foliation}. Let us see how the stable norm $\sig_F$ transforms under $\vf$.

\begin{lemma}\label{lemma vf on sigma}
 If $\vf:\T\to\T$ is the diffeomorphism from Lemma \ref{lemma KAM-foliation}, then there exists a linear isomorphism $L_\vf:\R^2\to\R^2$ with
 \[ \sig_{\hat F} = \sig_F \circ L_\vf^{-1} . \]
 Moreover, there exists $\lam>0$ with
 \[ \rho = \lam \cdot L_\vf \xi . \]
\end{lemma}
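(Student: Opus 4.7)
The plan is to obtain $L_\vf$ as the linear part of any lift $\tilde\vf:\R^2 \to \R^2$ of the torus diffeomorphism $\vf$ from Lemma \ref{lemma KAM-foliation}, and then to verify the two asserted identities by explicit computation on lifts and closed curves. Concretely, I would fix such a lift and note that, since $\vf$ descends to $\T = \R^2/\Z^2$, for each $z\in\Z^2$ the difference $\tilde\vf(x+z)-\tilde\vf(x)$ lies in $\Z^2$ and, depending continuously on $x$, is in fact constant in $x$. This defines a group homomorphism $L_\vf:\Z^2\to\Z^2$, which I extend $\R$-linearly to all of $\R^2$. Running the same construction for $\vf^{-1}$ gives $L_{\vf^{-1}}\circ L_\vf = \id$, so $L_\vf\in \mathrm{GL}_2(\Z)$; in particular $u(x):=\tilde\vf(x)-L_\vf x$ is $\Z^2$-periodic and hence bounded.

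For the identity $\sig_{\hat F} = \sig_F\circ L_\vf^{-1}$, I would use that by construction $\hat F = \vf_*F$, so $\vf:(\T,F)\to(\T,\hat F)$ is a Finsler isometry and $l_{\hat F}(\vf\circ c)=l_F(c)$ for every curve $c$. The map $c\mapsto \vf\circ c$ is then a length-preserving bijection between closed curves, with $[\vf\circ c] = L_\vf [c]$ in $\pi_1(\T)=\Z^2$; taking infima gives $\sig_{\hat F}(L_\vf z)=\sig_F(z)$ for every $z\in\Z^2$. Using the homogeneous-then-continuous extension procedure recalled in the introduction, this lattice identity passes to all of $\R^2$, giving $\sig_{\hat F}\circ L_\vf = \sig_F$.

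For the directional claim, I would pick any $v\in\M(\xi)$ and consider the associated $F$-minimal geodesic $c_v:\R\to\T$; by the very definition of $\M(\xi)$, each of its lifts has asymptotic direction $\xi/|\xi|$. By Lemma \ref{lemma KAM-foliation}, $\vf\circ c_v(t)=\vf(\pi(v))+t\rho \mod \Z^2$, whose lifts go to infinity in the direction $\rho/|\rho|$. Writing the lifted identity as $\tilde\vf(\tilde c_v(t)) = L_\vf \tilde c_v(t) + u(\tilde c_v(t))$ with $u$ bounded, dividing by Euclidean norms and letting $t\to\infty$ yields $\rho/|\rho|=L_\vf\xi/|L_\vf\xi|$, and setting $\lam:=|\rho|/|L_\vf\xi|>0$ produces the required equality $\rho=\lam L_\vf\xi$. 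The only real delicacy I expect is to ensure that $L_\vf$ lies in $\mathrm{GL}_2(\Z)$ rather than merely $\mathrm{GL}_2(\R)$, so that the lattice bijection $z\leftrightarrow L_\vf z$ underlying the stable-norm identity is well-defined; this is precisely what the inverse-lift argument in the first paragraph guarantees, after which everything reduces to linear-algebraic bookkeeping.
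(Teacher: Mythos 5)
Your argument is correct, and it differs from the paper's proof in two ways worth noting. For the identity $\sig_{\hat F}=\sig_F\circ L_\vf^{-1}$, the paper works with Bangert's homological description of the stable norm (infima over real linear combinations of closed curves in a class $h\in H_1(\T,\R)$), which yields the identity for all real classes in one stroke; you instead prove the lattice identity $\sig_{\hat F}(L_\vf z)=\sig_F(z)$ from the isometry property of $\vf$ and then push it to $\R^2$ via the homogeneous-plus-continuous extension that defines the stable norm — both are sound, and your $L_\vf$ (linear part of a lift, an element of $\mathrm{GL}_2(\Z)$) is exactly the induced map on $H_1(\T,\R)\cong\pi_1(\T)\otimes\R$ that the paper takes from Bredon. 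The more substantive difference is in the second claim: the paper closes up long segments of the recurrent geodesic $c_v$, invokes Birkhoff's ergodic theorem to get existence of the rotation vector $\rho_0=\lim\widetilde{c_v}(T)/T$ (only for almost every $v\in\M(\xi)$), shows $\rho_0=\lam\xi$, and then computes $L_\vf\rho_0=\rho$ via homology classes of the closed-up curves; you bypass the rotation vector entirely by writing $\tilde\vf=L_\vf+u$ with $u$ bounded and comparing asymptotic directions of $\tilde\vf\circ\tilde c_v(t)=\tilde\vf(\tilde c_v(0))+t\rho$ with those of $L_\vf\tilde c_v(t)$, which works for every $v\in\M(\xi)$ and needs only the defining limit $\tilde c_v(t)/|\tilde c_v(t)|\to\xi/|\xi|$ together with $|\tilde c_v(t)|\to\infty$ (immediate since $d_F$ is comparable to the Euclidean distance on $\R^2$). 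Your route is arguably cleaner and avoids the "almost every $v$" caveat; like the paper, you do use the fact, established in the proof of Lemma \ref{lemma KAM-foliation} rather than in its statement, that $\vf\circ c_v$ is the linear orbit $\vf(\pi(v))+t\rho$, so it would be worth citing that explicitly.
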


\begin{proof}
 We defined $\sig_F$ as a function on $\R^2$, while it could be equivalently defined on the first homology group $H_1(\T,\R)\cong\R^2$ via
 \[ \sig_F(h) = \inf\left\{ \sum_{i=1}^k r_i l_F(c_i) ~\bigg|~ r_i\in\R, c_i \text{ closed curve in $\T$}, h = \sum_{i=1}^k r_i [c_i] \right\} , \]
 see Section 2 of \cite{bangert-minimal-geod}. The diffeomorphism $\vf:\T\to\T$ induces a linear isomorphism
 \[ L_\vf:H_1(\T,\R)\to H_1(\T,\R) , \qquad L_\vf[c] = [\vf \circ c] , \]
 see Corollary 4.3 on p.\ 176 in \cite{bredon}. Using that $l_{\hat F}(c)= l_F(\vf^{-1}\circ c)$ for the length of curves, we find with the above definition of $\sig_F$, that
 \begin{align*}
  \sig_{\hat F}(L_\vf h) & = \inf\left\{ \sum r_i l_{\hat F}(c_i) ~\bigg|~   L_\vf h = \sum r_i [\vf \circ\vf^{-1}\circ c_i] \right\} \\
  & = \inf\left\{ \sum r_i l_F(\vf^{-1}\circ c_i) ~\bigg|~   L_\vf h = \sum r_i L_\vf[\vf^{-1}\circ c_i] \right\} \\
  & = \inf\left\{ \sum r_i l_F(\vf^{-1}\circ c_i) ~\bigg|~   h = \sum r_i [\vf^{-1}\circ c_i] \right\} \\
  & = \sig_F(h), 
 \end{align*}
 i.e.\ the first claim follows. Let now $v\in \M(\xi)$, then the $F$-geodesic $c_v$ is recurrent and there exists a sequence $T_n\to\infty$ with $c_v(T_n) \to c_v(0)$. We close $c_v|_{[0,T_n]}$ by a short segment $\e_n$ and write $\widetilde{c_v}:\R\to\R^2$ for some lift of $c_v$. It follows for the homology class of $c_v|_{[0,T_n]} * \e_n$ seen as a point in $\Z^2\subset\R^2$, that
 \begin{align}\label{eqn hom class of c_v}
  \lim_{n\to\infty} [ c_v|_{[0,T_n]} * \e_n ] - \widetilde{c_v}(T_n)-\widetilde{c_v}(0) = 0.
 \end{align}
 By Birkhoff's ergodic theorem, the rotation vector
 \[ \rho_0 := \lim_{T\to\infty} \frac{\widetilde{c_v}(T)}{T} \]
 exists (at least for almost every $v\in \M(\xi)$), such that by definition of $\xi=\lim_{T\to\infty}\frac{\widetilde{c_v}(T)}{|\widetilde{c_v}(T)|}$, we find
 \begin{align*}
  \rho_0 & = \lim_{T\to\infty} \frac{\widetilde{c_v}(T)}{T} = \lim_{T\to\infty}  \frac{|\widetilde{c_v}(T)|}{T} \cdot \frac{\widetilde{c_v}(T)}{|\widetilde{c_v}(T)|} = : \lam \cdot \xi .
 \end{align*}
 Moreover, by \eqref{eqn hom class of c_v} and the analogous statement for $\vf\circ c_v(t) = x+t\rho$
 \begin{align*}
  L_\vf\rho_0 & = L_\vf\left(\lim_{T\to\infty} \frac{\widetilde{c_v}(T)}{T}\right) = L_\vf\left(\lim_{n\to\infty} \frac{[ c_v|_{[0,T_n]} * \e_n ]}{T_n}\right) \\
  & = \lim_{n\to\infty} \frac{[\vf \circ c_v|_{[0,T_n]} *\vf \circ  \e_n ]}{T_n} = \lim_{n\to\infty} \frac{\rho\cdot T_n }{T_n} = \rho .
 \end{align*}
 This proves the second claim.
\end{proof}

In the following, we shall use the standard coordinates $\T\times \R^2$ for $T\T$. 

\begin{lemma}\label{lemma hat F indep}
 Let $\rho$ be the frequency vector of $\M(\xi)$ and suppose that $\xi$ has irrational slope $\xi_2/\xi_1\in \R-\Q$. Then
 \[ \hat F (.,\rho) :\T\to\R , \qquad \frac{\partial \hat F}{\partial v}(.,\rho) :\T\to(\R^2)^* \]
 are constant.
\end{lemma}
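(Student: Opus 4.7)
The plan is to exploit two facts simultaneously: (a) by Lemma~\ref{lemma KAM-foliation} the straight lines $t\mapsto x+t\rho\bmod\Z^2$ are arc-length $\hat F$-geodesics, and (b) by Lemma~\ref{lemma vf on sigma}, $\rho=\lam L_\vf\xi$, where $L_\vf\in GL_2(\Z)$ since it is the isomorphism induced on $H_1(\T,\Z)$ by a diffeomorphism of $\T$. Consequently $\rho$ inherits irrational slope from $\xi$, so every orbit of the linear flow $\psi^t x=x+t\rho$ is dense in $\T$.

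First I would observe that ``arc-length geodesic'' for $\hat F$ means the speed $\hat F(c(t),\dot c(t))$ is constant along the geodesic $c(t)=x+t\rho$. That is, the map $t\mapsto \hat F(x+t\rho,\rho)$ is constant for each fixed $x\in\T$. Equivalently, the continuous function $\hat F(\cdot,\rho):\T\to\R$ is invariant under the linear flow $\psi^t$. Since $\rho$ has irrational slope the orbits of $\psi^t$ are dense, so $\hat F(\cdot,\rho)$ must be constant on $\T$. This gives the first assertion and, in particular,
\[
\frac{\partial \hat F}{\partial x}(x,\rho)=0\qquad\forall x\in\T.
\]

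For the derivative in $v$, I would use the Euler--Lagrange equation for the $\hat F$-geodesics; by Remark~\ref{remark kam torus c3} the KAM-torus is $C^3$, so $\hat F$ is at least $C^2$ away from the zero section and the Euler--Lagrange equation is available. Along the geodesic $c(t)=x+t\rho$ with $\dot c\equiv\rho$, this equation reads
\[
\frac{d}{dt}\Bigl[\tfrac{\partial \hat F}{\partial v}(x+t\rho,\rho)\Bigr]=\tfrac{\partial \hat F}{\partial x}(x+t\rho,\rho)=0,
\]
by the first part. Hence $\tfrac{\partial \hat F}{\partial v}(\cdot,\rho):\T\to(\R^2)^*$ is also invariant under $\psi^t$, and once more the density of orbits forces it to be constant on $\T$.

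The only genuinely nontrivial checkpoint is the passage from ``constant along a single orbit'' to ``constant on $\T$''; this rests on the density of orbits of $\psi^t$, which in turn depends on $\rho$ having irrational slope. The key ingredient for that is the integrality of $L_\vf$, which follows from the fact that $L_\vf$ is induced by a self-diffeomorphism of the torus and hence lies in $GL_2(\Z)$, mapping vectors of irrational slope to vectors of irrational slope. Everything else is a direct application of Lemma~\ref{lemma KAM-foliation} and the Euler--Lagrange equation, so I do not anticipate further obstacles.
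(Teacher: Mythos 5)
Your proof is correct and follows essentially the same route as the paper: constancy of $\hat F(\cdot,\rho)$ from the arc-length property, then the Euler--Lagrange equation together with density of the irrational linear flow to make $\frac{\partial \hat F}{\partial v}(\cdot,\rho)$ constant. The only cosmetic differences are that the paper gets $\hat F(\cdot,\rho)\equiv 1$ directly (every point of $\T$ lies on a unit-speed line, so no density argument is needed for the first claim) and deduces irrationality of $\rho$ dynamically (a rational $\rho$ would force periodic orbits in $\M(\xi)$), whereas you use $L_\vf\in GL_2(\Z)$; both variants are fine.
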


\begin{proof}
 For the first claim note that the curves $x+t\rho$ are arc-length $\hat F$-geodesics, i.e.\ $\hat F(x,\rho)=1$ for all $x$. Now consider the Euler-Lagrange equation for $\hat F$, fulfilled by the $\hat F$-geodesics:
 \[ \frac{d}{dt} \frac{\partial \hat F}{\partial v}(c,\dot c) = \frac{\partial \hat F}{\partial x}(c,\dot c) . \]
 Then, for $c(t)=t\rho \mod \Z^2$ we find by $\hat F(.,\rho)=\const$, that
 \[ \frac{\partial \hat F}{\partial x}(c,\dot c) = \frac{\partial \hat F}{\partial x}(t\rho , \rho) = 0 . \]
 Hence, by the Euler-Lagrange equation
 \[ \frac{d}{dt} \frac{\partial \hat F}{\partial v}(t\rho,\rho) = 0 , \]
 such that $\frac{\partial \hat F}{\partial v}(t\rho,\rho)$ is independent of $t$. By the irrationality of $\xi$, the vector $\rho$ is also irrational (otherwise, $\M(\xi)$ would contain periodic orbits), such that the curve $c(t)= t\rho$ is dense in $\T$. The claim follows.
\end{proof}

In the next lemma, we obtain the desired estimates in Theorem \ref{main thm intro irrat} \eqref{main irrat i} for the Finsler metric $\hat F$. Recall the notation
\[ \beta_{\hat F} = \frac{1}{2} \sig_{\hat F}^2 . \]
Also note that $\beta_{\hat F}$ is differentiable in $\rho$ by Theorem \ref{thm intro mather-bangert} \eqref{thm intro mather-bangert irrat}.

\begin{lemma}\label{Taylor for stable norm}
 Let the regularity of the KAM-torus $\M(\xi)$ be $k\geq 3$ and let $\xi$ have irrational slope. Then there exists a constant $C\geq 1$, such that for all $h\in \R^2$
 \begin{align*}
  \frac{1}{C} |h-\rho|^2 \leq  \beta_{\hat F}(h) - \beta_{\hat F}(\rho) - D\beta_{\hat F}(\rho)[h-\rho] \leq C |h-\rho|^2 .
 \end{align*}
\end{lemma}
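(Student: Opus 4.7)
The plan is to prove matching quadratic upper and lower bounds for the deviation $\beta_{\hat F}(h) - \beta_{\hat F}(\rho) - D\beta_{\hat F}(\rho)[h-\rho]$, exploiting the special structure from Lemma \ref{lemma hat F indep}: both $\hat F(\cdot,\rho)\equiv 1$ and the covector $c^* := \tfrac{\partial \hat F}{\partial v}(\cdot,\rho) \in (\R^2)^*$ are constant on $\T$. As preliminaries I would record that since $t\mapsto t\rho$ is an arc-length $\hat F$-geodesic whose projection is dense in $\T$, a standard closing argument gives $\sig_{\hat F}(\rho)=1$, hence $\beta_{\hat F}(\rho) = 1/2$; positive $1$-homogeneity of $\hat F$ in $v$ yields $c^*\cdot \rho = \hat F(\cdot,\rho) = 1$.

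For the upper bound I would use averaging. Unique ergodicity of the linear flow $x\mapsto x+th$ on $\T$ (for irrational $h$) applied via Birkhoff's theorem to $\hat F(\cdot,h)$ gives
\[ \lim_{T\to\infty}\tfrac{1}{T}\int_0^T \hat F(x+th,h)\,dt = \int_\T \hat F(y,h)\,dy =: \Psi(h), \]
and closing up long orbits with short arcs produces closed loops that realize this average as $\hat F$-length per unit homological weight, so that $\sig_{\hat F}(h)\le \Psi(h)$; by continuity this extends to rational $h$ near $\rho$. Squaring, $\beta_{\hat F}(h) \le \bar\beta(h) := \tfrac{1}{2}\Psi(h)^2$. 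The function $\bar\beta$ is smooth and strongly convex, and by Lemma \ref{lemma hat F indep} we have $\bar\beta(\rho) = 1/2 = \beta_{\hat F}(\rho)$ and $D\bar\beta(\rho) = c^*$. Since $\beta_{\hat F}$ is differentiable at $\rho$ by Theorem \ref{thm intro mather-bangert} \eqref{thm intro mather-bangert irrat}, the tangency forces $D\beta_{\hat F}(\rho) = c^*$, and Taylor expansion of the smooth upper bound $\bar\beta$ at $\rho$ yields the quadratic upper estimate.

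For the lower bound I would invoke Mather's variational characterization $\beta_{\hat F}(h) = \inf_\mu \int \hat L\,d\mu$, where $\hat L = \tfrac{1}{2}\hat F^2$ and the infimum runs over Euler--Lagrange invariant probability measures $\mu$ on $T\T$ with rotation vector $\int v\,d\mu = h$. Set
\[ g(x,v) := \hat L(x,v) - c^*\cdot v + \tfrac{1}{2}. \]
The convexity inequality for $\hat F(x,\cdot)$ with base point $\rho$, together with Lemma \ref{lemma hat F indep}, gives $\hat F(x,v)\ge 1 + c^*\cdot (v-\rho) = c^*\cdot v$; combining with AM--GM, $\hat F^2 + 1 \ge 2\hat F \ge 2 c^*\cdot v$, hence $g\ge 0$ globally with equality iff $v=\rho$. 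I then want the stronger estimate
\[ g(x,v) \ge \kappa\, |v-\rho|^2 \qquad \forall (x,v)\in T\T, \]
for some $\kappa>0$, to be verified by combining (i) uniform positivity of the Hessian $\partial_v^2 \hat L(\cdot,\rho)$ over compact $\T$, giving a local quadratic estimate near $v=\rho$, (ii) quadratic growth of $\hat F^2$ as $|v|\to\infty$, and (iii) a compactness argument on the intermediate annular region where $g/|v-\rho|^2$ is continuous and strictly positive. Granted this, Jensen's inequality applied to $|\cdot|^2$ on any admissible $\mu$ yields
\[ \int \hat L\,d\mu - c^*\cdot h + \tfrac{1}{2} = \int g\,d\mu \ge \kappa \int |v-\rho|^2\,d\mu \ge \kappa\left|\int (v-\rho)\,d\mu\right|^2 = \kappa|h-\rho|^2, \]
and taking the infimum over $\mu$ gives the desired inequality $\beta_{\hat F}(h) - \beta_{\hat F}(\rho) - c^*(h-\rho) \ge \kappa|h-\rho|^2$.

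The main obstacle I anticipate is the global quadratic bound $g\ge \kappa|v-\rho|^2$: the Hessian argument delivers only a local estimate near $v=\rho$, and gluing it with the growth of $\hat F^2$ at infinity through a compactness argument requires some care in the non-reversible Finsler setting. A secondary delicate point is to justify $D\beta_{\hat F}(\rho) = c^*$ in a way that makes the two bounds compatible; the cleanest route is to note that the Mather measure with rotation vector $\rho$ is supported on the KAM-torus $\M(\xi)$, so that the Legendre transform of the explicit action on this torus identifies the derivative.
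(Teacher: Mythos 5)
Your argument is correct in substance, and both of the obstacles you flag can be resolved essentially as you sketch, but your route is genuinely different from the paper's. The paper works entirely with closed curves: it Taylor-expands $\hat L=\tfrac12\hat F^2$ in $v$ around $\rho$, uses the uniform positive definiteness of $\partial_v^2\hat L$ together with Lemma \ref{lemma hat F indep} to sandwich $\hat L$ between two $x$-independent comparison Lagrangians $L_0^\pm(v)=\hat L(x,\rho)+\partial_v\hat L(x,\rho)[v-\rho]\pm C^{\pm1}|v-\rho|^2$, and then transfers this sandwich to $\beta_{\hat F}$ by combining the Cauchy--Schwarz inequality $l_{\hat F}(c)^2\le 2T\int_0^T\hat L$ with the fact that straight lines minimize the fixed-endpoint action of any $x$-independent Tonelli Lagrangian; both bounds and the identities $\beta_{\hat F}(\rho)=\hat L(x,\rho)$, $D\beta_{\hat F}(\rho)=\partial_v\hat L(x,\rho)$ come out of one argument, with no invariant measures or ergodic theory. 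You instead get the upper bound by averaging ($\sigma_{\hat F}\le\Psi$ via unique ergodicity and a closing argument, then Taylor-expanding the majorant $\tfrac12\Psi^2$, which also identifies $D\beta_{\hat F}(\rho)=c^*$ by one-sided tangency --- this already settles your ``secondary delicate point'', no Mather measure on the torus is needed), and the lower bound via Mather's characterization $\beta_{\hat F}(h)=\min_\mu\int\hat L\,d\mu$ plus Jensen. Note that your inequality $g\ge\kappa|v-\rho|^2$ is precisely the paper's $\hat L\ge L_0^-$ in disguise; your three-region proof of it works (near $v=\rho$ by the uniform Hessian, at infinity by quadratic growth, in between by compactness, using that strong convexity of the Finsler unit spheres makes $v=\rho$ the unique zero of $g$) and is if anything more careful than the paper's Hessian bound along the Taylor segment. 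What the paper's route buys is self-containedness: it never invokes the identification of $\tfrac12\sigma_{\hat F}^2$ with the measure-theoretic $\beta$, which your lower bound imports from Mather theory and which the paper only cites. Two small points to tighten: the closing argument alone gives only $\sigma_{\hat F}(\rho)\le1$, and the reverse inequality should be taken from $g\ge0$ (i.e.\ $\beta_{\hat F}(h)\ge c^*h-\tfrac12$ at $h=\rho$) or from minimality of the lines $x+t\rho$ as isometric images of minimal geodesics; and since the lemma is claimed for all $h\in\R^2$, observe that $\tfrac12\Psi^2$ is $2$-homogeneous with Hessian bounded on the unit circle, hence $C^{1,1}$ globally, so the quadratic upper Taylor estimate is not merely local.
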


\begin{proof}
 We consider the Lagrangian $L=\frac{1}{2}\hat F^2$. Then by a Taylor expansion, for some $t_*\in (0,1)$
 \begin{align*}
  L(x,v) & = L(x,\rho) + \frac{\partial L}{\partial v}(x,\rho)[v-\rho] \\
  & \qquad \qquad + \frac{1}{2} \frac{\partial^2 L}{\partial v^2}(x,\rho+t_* (v-\rho))[(v-\rho),(v-\rho)] .
 \end{align*}
 As the Hessian $\frac{\partial^2 L}{\partial v^2}(x,\rho)$ is positive definite by the definition of a Finsler metric, we find a constant $C\geq 1$, such that
 \[ \forall (x,v) \in T\T, w\in \R^2 : \qquad \frac{1}{C} |w|^2 \leq \frac{1}{2} \frac{\partial^2 L}{\partial v^2}(x,v)[w,w] \leq C |w|^2 . \]
 Using Lemma \ref{lemma hat F indep}, the functions
 \begin{align*}
  & L(x,\rho) = \frac{1}{2}\hat F(x,\rho)^2 , \\
 & \frac{\partial L}{\partial v}(x,\rho)[w] = \frac{1}{2}\frac{d}{dt}\bigg|_{t=0} \hat F(x,\rho +tw)^2 = \hat F(x,\rho) \cdot \frac{\partial \hat F}{\partial v}(x,\rho)[w]
 \end{align*}
 are independent of $x\in\T$. Then consider the new Lagrangians
 \begin{align*}
  L_0^-(v) &= L(x,\rho) + \frac{\partial L}{\partial v}(x,\rho)[v-\rho] + \frac{1}{C} |v-\rho|^2 , \\
  L_0^+(v) &= L(x,\rho) + \frac{\partial L}{\partial v}(x,\rho)[v-\rho] + C |v-\rho|^2
 \end{align*}
 satisfying
 \begin{align}\label{L_0 approx L}
  L_0^-(v) \leq L(x,v) \leq L_0^+(v)
 \end{align}
 for all $(x,v)\in T\T$.
 
 We shall use \eqref{L_0 approx L} to find an analogous estimate for $\beta_{\hat F}$. For any curve $c:[0,T]\to \R^2$ we obtain by the $L^2$-Cauchy-Schwarz inequality, that
 \begin{align}\label{CS-ineq}
  l_{\hat F}(c;[0,T])^2 & = \la 1, \hat F(c,\dot c) \ra_{L^2}^2 \leq \|1\|_{L^2}^2 \cdot \|\hat F(c,\dot c)\|_{L^2}^2 \\
  \nonumber & = T \cdot \int_0^T \hat F(c,\dot c)^2 dt = 2T \cdot \int_0^T \hat L(c,\dot c) dt ,
 \end{align}
 with equality if and only if $\hat F(c,\dot c)=\const$. Let $z\in \Z^2$ and let $c:[0,T]\to\R^2$ for some $T>0$ be the shortest closed $\hat F$-geodesic in the homotopy class $z$ (i.e.\ $c(T)-c(0)=z$), parametrized to have constant $\hat F$-speed. Moreover, let $c_0:[0,T]\to\R^2$ be given by $c_0(t) = c(0) + t\frac{z}{T}$. If $L_0:T\T\to\R$ is any Tonelli Lagrangian independent of the base variable $x\in\T$, then fixing $T>0$ and the endpoints $c(0),c(T)\in \R^2$, it is known that the action $\int_0^T L_0(c,\dot c) dt$ is minimized by the curve $c_0$ with constant velocity vector (by Tonelli's Theorem there exist minimizers for fixed endpoints and fixed connection time, which have to be solutions of the Euler-Lagrange equation; by $L_0$ being independent of $x$, such minimizers have to be straight lines). It follows by \eqref{L_0 approx L} and \eqref{CS-ineq}, that
 \begin{align*}
  L_0^-(\tfrac{z}{T}) & = \frac{1}{T} \int_0^T L_0^-(c_0,\dot c_0) dt \leq \frac{1}{T} \int_0^T L_0^-(c,\dot c) dt \leq \frac{1}{T} \int_0^T L(c,\dot c) dt \\
  & = \frac{1}{2T^2} l_{\hat F}(c;[0,T])^2 \leq \frac{1}{2T^2} l_{\hat F}(c_0;[0,T])^2 \leq \frac{1}{T} \int_0^T L(c_0,\dot c_0) dt \\
  & \leq \frac{1}{T} \int_0^T L_0^+(c_0,\dot c_0) dt = L_0^+(\tfrac{z}{T}) .
 \end{align*}
 By $l_{\hat F}(c;[0,T]) = \sig_{\hat F}(z)$ and homogeneity of $\sig_{\hat F}$, we obtain
 \[ L_0^-(\tfrac{z}{T}) \leq \beta_{\hat F}(\tfrac{z}{T}) = \frac{1}{2} \sig_{\hat F}(\tfrac{z}{T})^2 \leq L_0^+(\tfrac{z}{T}) \qquad \forall z\in \Z^2, T>0 . \]
 Using continuity it follows that
 \begin{align*}
  L_0^-  \leq \beta_{\hat F} \leq L_0^+
 \end{align*}
 everywhere. Using the definition of $L_0^\pm$ one infers
 \begin{align*}
  L(x,\rho) = \beta_{\hat F}(\rho) , \qquad \frac{\partial L}{\partial v}(x,\rho) = D\beta_{\hat F}(\rho) 
 \end{align*}
 and the claim follows.
\end{proof}

We now translate Lemma \ref{Taylor for stable norm} into a statement for the original Finsler metric $F$ and obtain the main result in this section, which is stated as item \eqref{main irrat i} of Main Theorem \ref{main thm intro irrat} in the introduction.

\begin{thm}\label{thm KAM-torus}
 If $\xi\in \R^2-\{0\}$ has irrational slope and if the set $\M(\xi)$ is a $C^3$-KAM-torus for the geodesic flow $\phi_F^t$, then the square of the stable norm $\sig_F$ is strongly convex near $\xi$. More precisely, there exists a constant $C\geq 1$, such that for the function
 \[ \beta_F = \frac{1}{2} \sig_F^2 \]
 we have for all $h\in\R^2$
 \begin{align*}
  \frac{1}{C} \cdot| h- \xi|^2 \leq \beta_F( h) - \beta_F( \xi) - D\beta_F( \xi)[ h-\xi] \leq C \cdot | h-  \xi|^2 .
 \end{align*}
\end{thm}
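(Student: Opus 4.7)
The plan is to transfer the quadratic estimate of Lemma \ref{Taylor for stable norm}, which concerns $\beta_{\hat F}$ at the frequency vector $\rho$, back to $\beta_F$ at $\xi$ via the linear isomorphism $L_\vf$ from Lemma \ref{lemma vf on sigma}, using 2-homogeneity to compensate for the rescaling $\rho = \lam L_\vf \xi$.

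First I would note that $\beta_F = \tfrac12 \sig_F^2 = \tfrac12(\sig_{\hat F}\circ L_\vf)^2 = \beta_{\hat F}\circ L_\vf$ by Lemma \ref{lemma vf on sigma}. Since $L_\vf$ is linear, the chain rule yields $D\beta_F(\xi) = D\beta_{\hat F}(L_\vf\xi)\circ L_\vf$, and hence
\begin{align*}
 \beta_F(h) - \beta_F(\xi) - D\beta_F(\xi)[h-\xi] = \beta_{\hat F}(L_\vf h) - \beta_{\hat F}(L_\vf\xi) - D\beta_{\hat F}(L_\vf\xi)[L_\vf(h-\xi)].
\end{align*}
Setting $w := L_\vf(h-\xi)$ and using $L_\vf \xi = \rho/\lam$, the statement of Theorem \ref{thm KAM-torus} reduces to a two-sided quadratic bound for $\beta_{\hat F}$ at the base point $\rho/\lam$ in the direction $w$.

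Second, to move the base point from $\rho$ (where Lemma \ref{Taylor for stable norm} applies) to $\rho/\lam$, I would use that $\sig_{\hat F}$ is $1$-homogeneous, so $\beta_{\hat F}$ is $2$-homogeneous and $D\beta_{\hat F}(tv) = t\, D\beta_{\hat F}(v)$ for $t>0$. Writing $\rho/\lam + w = \lam^{-1}(\rho + \lam w)$, these identities give
\begin{align*}
 \beta_{\hat F}(\rho/\lam + w) - \beta_{\hat F}(\rho/\lam) - D\beta_{\hat F}(\rho/\lam)[w] = \frac{1}{\lam^2}\left[ \beta_{\hat F}(\rho + \lam w) - \beta_{\hat F}(\rho) - D\beta_{\hat F}(\rho)[\lam w] \right].
\end{align*}
By Lemma \ref{Taylor for stable norm} applied with $h = \rho + \lam w$, the bracket is squeezed between $\tfrac{1}{C}|\lam w|^2$ and $C|\lam w|^2$; the prefactor $\lam^{-2}$ cancels $|\lam w|^2 = \lam^2 |w|^2$, so the same constants $C$ control the expansion of $\beta_{\hat F}$ at $\rho/\lam$.

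Finally I would absorb $L_\vf$ into the constant. Since $L_\vf : \R^2 \to \R^2$ is a linear isomorphism, there exist $c_1,c_2>0$ with $c_1|v|\le |L_\vf v|\le c_2|v|$ for every $v\in\R^2$. Substituting $w = L_\vf(h-\xi)$ into the previous step and combining with the identity from the first paragraph gives the desired bound with new constants $C/c_1^2$ and $Cc_2^2$ in place of $1/C$ and $C$, which I would relabel $C$. No substantial obstacle arises: the only delicate point is bookkeeping the homogeneity factor $\lam$ introduced by $\rho = \lam L_\vf\xi$ in Lemma \ref{lemma vf on sigma}, which is resolved transparently by the $2$-homogeneity of $\beta_{\hat F}$.
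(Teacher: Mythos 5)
Your proposal is correct and follows essentially the same route as the paper's proof: transfer Lemma \ref{Taylor for stable norm} from $\beta_{\hat F}$ at $\rho$ to $\beta_F$ at $\xi$ via the linear isomorphism $L_\vf$ of Lemma \ref{lemma vf on sigma}, handle the scaling $\rho=\lam L_\vf\xi$ by $2$-homogeneity of $\beta$, and absorb the norm distortion of $L_\vf$ into the constant. The only cosmetic difference is that you pull back through $\beta_F=\beta_{\hat F}\circ L_\vf$ and rescale the base point first, while the paper pushes forward through $\beta_{\hat F}=\beta_F\circ L_\vf^{-1}$ and divides by $\lam^2$ at the end; the content is identical.
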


\begin{proof}
 By Lemma \ref{Taylor for stable norm}, we find
 \begin{align*}
  \frac{1}{C} |h-\rho|^2 \leq \beta_{\hat F}(h) - \beta_{\hat F}(\rho) - D\beta_{\hat F}(\rho)[h-\rho] \leq  C |h-\rho|^2 .
 \end{align*}
 Using the first part of Lemma \ref{lemma vf on sigma},
 \begin{align*}
  & \beta_{\hat F} = \frac{1}{2} \sig_{\hat F}^2 = \frac{1}{2} (\sig_{F} \circ L_\vf^{-1})^2 = \beta_F\circ L_\vf^{-1} , \\
  & D\beta_{\hat F}(\rho) = D\beta_{F}(L_\vf^{-1}\rho) \circ L_\vf^{-1} .
 \end{align*}
 Also observe for the operator norm $\|.\|$, that
 \begin{align*}
  & |h-\rho|^2 \leq \|L_\vf\|^2 \cdot |L_\vf^{-1}h-L_\vf^{-1}\rho|^2 , \\
  & |L_\vf^{-1}h-L_\vf^{-1}\rho|^2  \leq \|L_\vf^{-1}\|^2 \cdot|h-\rho|^2 .
 \end{align*}
 Summarizing,
 \begin{align*}
  & \frac{1}{C\|L_\vf^{-1}\|^2} |L_\vf^{-1}h-L_\vf^{-1}\rho|^2 \\
  & \leq \beta_F(L_\vf^{-1}h) - \beta_F(L_\vf^{-1}\rho) - D\beta_{F}(L_\vf^{-1}\rho) [L_\vf^{-1}h-L_\vf^{-1}\rho] \\
  & \leq  C \|L_\vf\|^2 \cdot |L_\vf^{-1}h-L_\vf^{-1}\rho|^2 .
 \end{align*}
 The second part of Lemma \ref{lemma vf on sigma} showed $L_\vf^{-1}\rho = \lam \cdot \xi$ for some $\lam>0$. Replacing $C$ by a new constant and $L_\vf^{-1} h$ by $\lam \cdot h$ (as $h$ is arbitrary) yields
 \begin{align*}
  \frac{1}{C} |\lam \cdot h-\lam \cdot \xi|^2 & \leq \beta_F(\lam \cdot h) - \beta_F(\lam \cdot \xi) - D\beta_{F}(\lam \cdot \xi) [\lam \cdot h-\lam \cdot \xi] \\
  & \leq  C \cdot |\lam \cdot h-\lam \cdot \xi|^2 .
 \end{align*}
 Finally, using $\beta_F(\lam \cdot h) = \lam^2 \cdot \beta_F(h)$ and $D\beta_{F}(\lam \cdot \xi)= \lam \cdot D\beta_{F}(\xi)$ due to homogeneity and dividing the estimates by $\lam^2$, the claim follows.
\end{proof}

\section{The hyperbolic case}\label{section hyp}

In this section we study the stable norm $\sig=\sig_F$ of the Finsler metric $F$ near a direction $\xi\in S^1$ under the assumption, that the set $\M(\xi)\subset S\T$ of minimal geodesics with asymptotic direction $\xi$ carries hyperbolicity (see Definition \ref{def hyp intro}). We shall again write $\M(a\cdot \xi)=\M(\xi)$ for all $a>0$. The ideas in this section are motivated by the works of J.\ Mather, \cite{mather2} and \cite{mather1}.


\subsection{The rational case}\label{section rational}

Let us first recall the structure of the set $\M(\xi)$ for the case where $\xi\in S^1$ has rational or infinite slope. For simplicity, we will in this subsection rescale $\xi$:
\[ z := a \cdot \xi \in \Z^2 , \qquad a := \min \{t>0 : t\xi \in \Z^2 \} . \]
In particular $z$ is prime, i.e.\ there does not exist $w\in \Z^2$ and some $n\geq 2$ with $z=n\cdot w$.

The following theorem is due to H. M. Morse \cite{morse} and G. A. Hedlund \cite{hedlund}. The result has been generalized to non-reversible Finsler metrics by several authors, see e.g.\ \cite{zaustinsky}, \cite{carneiro} and \cite{paper1}. Let us write
\[ \M^{per}(z) := \{ v\in \M(z) : c_v \text{ is $z$-periodic}\}. \]
Here, a $z$-periodic geodesic is a closed geodesic in the homotopy class $z \in \Z^2\cong \pi_1(\T)$; equivalently, the lifts $\tilde c:\R\to\R^2$ are invariant under the translation by $z$.

\begin{thm}[Morse, Hedlund]\label{morse periodic}
 Let $\del\in S^1$ have rational or infinite slope and $z\in \Z^2$ as defined above. Then we have the following:
 \begin{enumerate}[(i)]
  \item \label{morse periodic item 1} $\M^{per}(z) \neq \emptyset$ and $\M^{per}(z)$ determines a closed lamination of $\T$, i.e.\ no two $z$-periodic minimal geodesics intersect. Moreover, if $z=kw$ for some $w\in\Z^2$ and $k\geq 2$, then the $z$-periodic minimal geodesics are the $k$-th iterates of the $w$-periodic minimal geodesics.
  
  In particular, we can say that a pair $c_0,c_1:\R\to \R^2$ of minimal geodesics lifted from $\M^{per}(z)$ is {\em neighboring}, if in the strip between $c_0(\R),c_1(\R) \subset\R^2$ there are no further geodesics lifted from $\M^{per}(z)$.
  
  
  \item \label{morse periodic item 3} If $c_0,c_1:\R\to \R^2$ is a pair of neighboring minimal geodesics lifted from $\M^{per}(z)$, then there exist two minimal geodesics $c^-,c^+:\R\to\R^2$ lifted from $\M(z)-\M^{per}(z)$, which are heteroclinic between $c_0,c_1$ with opposite asymptotic behavior (see Figure \ref{fig_structure-rational}).
 \end{enumerate}
\end{thm}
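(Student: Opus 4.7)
I would obtain a shortest $z$-periodic geodesic by the direct method: restrict to absolutely continuous closed loops in homotopy class $z$ parametrized proportionally to $F$-arc length, bound their lengths by $\sig_F(z)+1$, and apply Arzel\`a--Ascoli to extract a minimizer, which is automatically a minimal closed $F$-geodesic. The non-crossing lemma is the heart of the lamination claim: two lifts $\tilde c_0,\tilde c_1\subset \R^2$ of $z$-periodic minimal geodesics cannot cross transversally, because a transversal intersection at $p$ persists under translation by $z$, giving a second intersection at $p+z$; exchanging arcs between $p$ and $p+z$ produces two broken closed loops of the same total $F$-length, and smoothing any corner strictly decreases length, contradicting the length-minimality of $\tilde c_0$ or $\tilde c_1$. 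Tangential intersections are ruled out by local uniqueness of $F$-geodesics, and closedness of the lamination in $S\T$ follows from compactness of $S\T$ together with continuity of the geodesic flow $\phi_F^t$.

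\textbf{Iterate property.} Given $z=kw$ with $k\geq 2$ and a lift $\tilde c$ of a $z$-periodic minimal geodesic of least period $T$, the translate $\tilde c+w$ is the lift of another $z$-periodic minimal geodesic. By the no-crossing result, either it agrees with $\tilde c$ as a subset of $\R^2$ --- in which case the winding forces $\tilde c(t+T/k)=\tilde c(t)+w$, proving $w$-periodicity --- or it is disjoint from $\tilde c$. I would exclude the latter by iterating: the translates $\tilde c+jw$ for $j=0,\dots,k$ would yield $k$ pairwise disjoint lifts inside the strip between $\tilde c$ and $\tilde c+z$, but $\tilde c+kw$ is $\tilde c$ shifted in time and hence has the same image, and the resulting cyclic ordering of the intermediate translates contradicts pairwise disjointness for $k\geq 2$.

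\textbf{Heteroclinic connections (part (ii)).} For a neighboring pair $\tilde c_0,\tilde c_1$ in $\R^2$ bounding a strip $S$ free of other lifts from $\M^{per}(z)$, to build $c^+$ I would choose sequences $p_n\in\tilde c_0$ escaping to $-\infty$ along $\tilde c_0$ and $q_n\in\tilde c_1$ escaping to $+\infty$ along $\tilde c_1$, and take shortest $F$-geodesic segments $\gamma_n$ from $p_n$ to $q_n$. A cut-and-paste argument against the minimality of $\tilde c_0,\tilde c_1$ confines each $\gamma_n$ to the closed strip $\bar S$. After reparametrizing so that $\gamma_n$ crosses a fixed transversal to the $z$-direction at time $0$, the $\gamma_n$ are uniformly Lipschitz on compact intervals; Arzel\`a--Ascoli then extracts a limit $c^+:\R\to\R^2$, which is a minimal geodesic contained in $\bar S$. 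Its $\alpha$- and $\omega$-limits in $\bar S$ consist of $z$-periodic minimal geodesics, hence must be $\tilde c_0$ or $\tilde c_1$, and the choice of $p_n,q_n$ forces backward asymptotics to $\tilde c_0$ and forward asymptotics to $\tilde c_1$. Swapping the roles of the endpoints yields $c^-$ with the opposite behavior.

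\textbf{Main obstacle.} The most delicate point is verifying that $c^+$ really transitions from $\tilde c_0$ to $\tilde c_1$ and does not degenerate into one of the boundary curves or escape to infinity on only one side. I would handle this via the $z$-translation symmetry of the configuration: the shifts $\gamma_n+mz$ give a controlled family, and the absence of intermediate $z$-periodic minimal geodesics in $\bar S$ rules out any limit $\alpha$- or $\omega$-set other than $\tilde c_0$ or $\tilde c_1$. Combined with the divergence of $p_n,q_n$ along opposite ends of $\tilde c_0$ and $\tilde c_1$, this pins down the correct heteroclinic asymptotics and separates $c^\pm$.
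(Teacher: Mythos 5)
First, note that the paper itself does not prove Theorem \ref{morse periodic}: it is quoted as a classical result of Morse and Hedlund (\cite{morse}, \cite{hedlund}), with the non-reversible Finsler versions referred to \cite{zaustinsky}, \cite{carneiro}, \cite{paper1}. Your sketch follows the standard line of proof (direct method, corner-swap non-crossing argument, ordering of translates for the iterate statement, limits of minimizing segments for the heteroclinics), but two steps have genuine gaps as written. In part (i), the loop produced by the direct method is a shortest closed geodesic in class $z$, but it is not ``automatically'' an element of $\M^{per}(z)$: by definition $\M^{per}(z)\subset\M(z)$, so you must show that its lifts minimize globally in $\R^2$, not merely among closed curves in the class $z$. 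The standard argument shortcuts a hypothetical non-minimal segment of the lift and closes up after $N$ periods, producing a loop in class $Nz$ of length strictly less than $N\sig_F(z)$; this contradicts the definition of $\sig_F(Nz)$ only once one knows $\sig_F(Nz)=N\sig_F(z)$, i.e.\ only after the translate-ordering argument you use for the iterate property has been run. So this identification needs its own argument and the order of steps matters. (A smaller point: ``tangential intersections are ruled out by local uniqueness'' only covers tangencies with equal tangent vectors; for a non-reversible $F$ an opposite-direction tangency is not excluded by ODE uniqueness, though your corner-swap argument does cover any intersection with distinct tangents.)

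In part (ii), the normalization you propose is insufficient and the remedy in your ``main obstacle'' paragraph does not close the obstacle. Reparametrizing so that $\gamma_n$ crosses a fixed transversal to the $z$-direction at time $0$ fixes only the longitudinal position; the boundary curves $\tilde c_0,\tilde c_1$ cross the same transversal, so the Arzel\`a--Ascoli limit can still collapse onto one of them. One must instead normalize at a crossing of a set separating the two boundary components inside the strip (for instance the last time $\gamma_n$ is at distance at least $\e_0$ from both boundary curves), which exists because $\gamma_n$ runs from $\tilde c_0$ to $\tilde c_1$; then the limit passes through an interior point and is neither boundary curve. Moreover, ``absence of intermediate $z$-periodic minimals rules out any limit $\alpha$- or $\omega$-set other than $\tilde c_0$ or $\tilde c_1$'' does not by itself prevent both ends of the limit from being asymptotic to the \emph{same} boundary curve. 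The missing ingredient is monotonicity under the deck transformation: the limit $c^+$ and its translate $c^++z$ are either equal (impossible, since then $c^+$ would be a periodic minimal in the interior of the gap) or disjoint by non-crossing; hence the family $c^++kz$, $k\in\Z$, is strictly ordered across the strip, its monotone limits as $k\to\pm\infty$ are invariant under translation by $z$ and therefore periodic minimals in the closed strip, so they equal $\tilde c_0$ and $\tilde c_1$, and strict ordering makes them distinct; these two limits are exactly the backward and forward asymptotes of $c^+$. With these repairs (and the analogous construction with the endpoint roles exchanged for $c^-$, which indeed works also in the non-reversible case), your outline becomes the classical Morse--Hedlund proof.
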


\begin{figure}\centering 
 \includegraphics[scale=1]{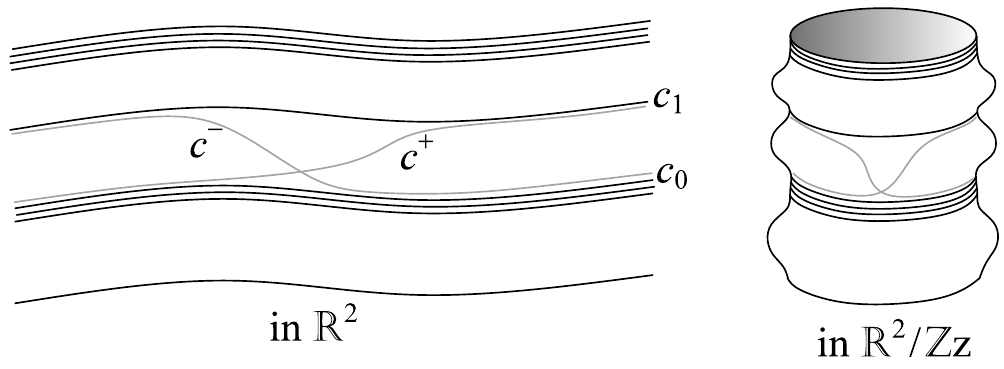}
 \caption{Some geodesics from $\M(z)$ for $z\in \Z^2-\{0\}$. There are some strips foliated by $z$-periodic geodesics and in between them there are gaps overstretched by heteroclinics. On the right one can see a geometric situation generating closed and heteroclinic minimals. \label{fig_structure-rational}}
\end{figure}

\begin{remark}
 It is known that $\M^{per}(z)$ consists precisely of the shortest closed geodesics in the homotopy class $z$. If the set $\M^{per}(z)$ is hyperbolic, then the heteroclinic connections between neighboring periodic minimals in Theorem \ref{morse periodic} \eqref{morse periodic item 3} are parts of the (un)stable manifolds of the periodic orbits.
\end{remark}

For the rest of this Section \ref{section hyp}, whenever we speak of a minimal geodesic, we mean a lift to the universal cover $\R^2$. We introduce some further notation.

\begin{defn}
 For $\xi=(\xi_1,\xi_2)\in \R^2$ we write $\xi^\perp := i\cdot \xi = (-\xi_2,\xi_1)$. We let $k(z)\in \N\cup\{\infty\}$ be the number of $z$-periodic minimals in $\R^2$ in the strip between a $z$-periodic minimal $c_0$ and its translate $c_0+z^\perp$.
\end{defn}

\begin{remark}\label{remark bounds k}
 \begin{enumerate}[(i)]
  \item In the case $k(z)<\infty$, we can choose any $z$-periodic minimal $c_0:\R\to\R^2$ and have the $z$-periodic minimals in the strip between $c_0,c_0+z^\perp$ ordered as
  \[ c_0 < c_1 < ... < c_{k(z)} = c_0+z^\perp , \]
  where we wrote $c_i<c_{i+1}$, if the image $c_{i+1}(\R)\subset\R^2$ lies left of $c_i(\R)$ with respect to the orientation given by the frame $(\dot c_i,(\dot c_i)^\perp)$.
 
  \item Assume that $\M^{per}(z)$ consists of a single orbit (this is conformally generic by Proposition \ref{generic-paper result}) and let $c_0:\R\to\R^2$ be a (lifted) $z$-periodic minimal. This means that in the strip between $c_0,c_0+z^\perp$, the $z$-periodic minimals correspond to the $(\Z^2+ c_0(0))$-points in that strip. The number $k(z)$ counts only the images of such minimals, so that using that $z$ is chosen to be prime one can easily show that
  \[ k(z) = 1 + \card \left(\Z^2 \cap Q(z) \right), \]
  where we wrote $Q(z)$ for the interior of the square spanned by $z,z^\perp$. Combining this with Pick's theorem, we find
  \[ k(z) = |z|\cdot |z^\perp| = |z|^2 , \]
  where $|.|$ is the euclidean norm on $\R^2$.
  
  \item If $\M^{per}(z)$ is assumed to be uniformly hyperbolic for the geodesic flow $\phi_F^t$, then $\M^{per}(z)\subset S\T$ is a finite union of closed orbits (by the lamination property in Theorem \ref{morse periodic}). Letting $n$ be the number of distinct orbits in $\M^{per}(z)$ we find
  \[ k(z) = n \cdot |z|^2 <\infty . \]
 \end{enumerate}
\end{remark}

The aim of this subsection is to prove Theorem \ref{intro rational hyp} from the introduction. For this we assume uniform hyperbolicity of the set $\M^{per}(z)$. As we noted after Theorem \ref{morse periodic}, the heteroclinic minimals $c^\pm$ between pairs of neighboring periodic minimals $c_0,c_1$ belong to the (un)stable manifolds. The hyperbolicity of $\M^{per}(z)$ will be used in the following form: There exist constants $C,\lam>0$ and parameters $S^\pm,T^\pm\in \R$, such that for all $t\geq 0$
\begin{align}\label{eq hyperbolicity}
 \begin{cases} d_F(c_1(-t),c^-(S^- -t)) &\leq C \exp(-\lam t) \\
 d_F(c^-(T^-+t),c_0(t)) &\leq C \exp(-\lam t) \\
 d_F(c_0(-t),c^+(S^+ -t)) &\leq C \exp(-\lam t) \\
 d_F(c^+(T^++t),c_1(t)) &\leq C \exp(-\lam t) 
 \end{cases} .
\end{align}
The notation will usually be that $c_1$ lies to the left of $c_0$ (using the orientation of the geodesics) and the choice of $c^-,c^+$ is such that the above inequalities hold.

The following lemma is the key observation in order to recognize hyperbolicity in the stable norm $\sig=\sig_F$ of $F$. Recall the definition
\begin{align*}
 D^+\sig(\xi)[v] &:=  \inf_{t>0} \frac{\sig(\xi+tv)-\sig(\xi)}{t} = \lim_{t\searrow 0} \frac{\sig(\xi+tv)-\sig(\xi)}{t} 
\end{align*}
and note that, due to homogeneity of $\sig$, for $a,b>0$
\[ D^+\sig(a \cdot \xi)[b \cdot v] = b \cdot D^+\sig(\xi)[v]. \]

\begin{lemma}\label{lemma stable norm nz pm z perp}
 Let $n\in\N$, $s\in \{-1,1\}$ and $z\in \Z^2-\{0\}$, such that $\M^{per}(z)$ is a hyperbolic set for the geodesic flow with hyperbolicity constants $C,\lam$ (in the sense of \eqref{eq hyperbolicity}). Then for integers $n_i$ with
 \[ 0 = n_0 \leq n_1 \leq ... \leq n_{k(z)-1} \leq n_{k(z)} = n \]
 there exists a continuous, piecewise $C^1$, $(nz +s z^\perp)$-periodic curve $\g$ with
 \begin{align*}
  &\l_F(\g) \leq \sig(nz) + D^+\sig(z)[s z^\perp] + 2C \sum_{i=1}^{k(z)} \exp\left(-\frac{\lam \sig(z)}{2} (n_i-n_{i-1}) \right) . 
 \end{align*}
\end{lemma}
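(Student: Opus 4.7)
By Theorem \ref{morse periodic} and the hyperbolicity of $\M^{per}(z)$, each pair of neighboring $z$-periodic minimals $c_{i-1},c_i$ admits a heteroclinic minimal $c^{s}_i\colon\R\to\R^2$ satisfying the exponential estimates \eqref{eq hyperbolicity}; I pick the sign $s\in\{\pm 1\}$ so that the chain $c^s_1,\dots,c^s_{k(z)}$ realises one full winding by $sz^\perp$ from $c_0$ to $c_{k(z)}=c_0+sz^\perp$. The plan is to construct $\gamma$ as the cyclic concatenation of suitably truncated heteroclinic segments, connected by very short bridges placed near the midpoints of the intermediate periodic strands.

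\textbf{Construction.} Set $T_i:=\sig(z)(n_i-n_{i-1})/2$ for $i=1,\dots,k(z)$. I select $\Z^2$-translates of the lifted heteroclinics so that the ``future'' side of $c^{s}_i$ and the ``past'' side of $c^{s}_{i+1}$ approach the same lift of $c_i$, with asymptotic phase matched at its midpoint. On each $c^{s}_i$ I retain the segment from parameter $S^{s}_i-T_i$ to $T^{s}_i+T_{i+1}$; by \eqref{eq hyperbolicity}, its starting point lies within $Ce^{-\lam T_i}$ of $c_{i-1}(-T_i)$ and its endpoint within $Ce^{-\lam T_{i+1}}$ of $c_i(T_{i+1})$. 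Between successive segments I insert a smooth bridge of length at most $2Ce^{-\lam T_{i+1}}$, which exists by the triangle inequality since both endpoints lie within $Ce^{-\lam T_{i+1}}$ of the common point $c_i(T_{i+1})$. With the correct choice of $\Z^2$-translates, a direct displacement count in $\R^2$ shows that the resulting continuous, piecewise $C^1$ curve $\gamma$ closes up modulo $nz+sz^\perp$.

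\textbf{Length estimate.} The arc length of the $i$-th retained heteroclinic segment is $(T^s_i+T_{i+1})-(S^s_i-T_i)=T_i+T_{i+1}+(T^{s}_i-S^{s}_i)$. Summing cyclically and using $\sum_i 2T_i=\sum_i(n_i-n_{i-1})\sig(z)=n\sig(z)=\sig(nz)$, the total heteroclinic contribution is $\sig(nz)+\sum_{i=1}^{k(z)}(T^{s}_i-S^{s}_i)$, whereas the bridges add at most $2C\sum_{i=1}^{k(z)}\exp\!\bigl(-\lam\sig(z)(n_i-n_{i-1})/2\bigr)$ -- precisely the error term in the lemma. It remains to bound the cumulative phase shift by
\[
\sum_{i=1}^{k(z)}(T^{s}_i-S^{s}_i)\;\leq\;D^{+}\sig(z)[sz^\perp].
\]

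\textbf{Main obstacle.} This inequality -- interpreting the cumulative phase shift of the heteroclinic cycle as the forward directional derivative -- is the core technical step. My plan is to extract it from the homogeneity-and-convexity identity $D^{+}\sig(z)[sz^\perp]=\lim_{n\to\infty}\bigl(\sig(nz+sz^\perp)-n\sig(z)\bigr)$ (which follows from homogeneity of $\sig$ and the $\inf$ definition of $D^+\sig$), together with the fact that under hyperbolicity of $\M^{per}(z)$ the shortest closed curves in homotopy classes $nz+sz^\perp$ themselves decompose, via the Morse--Hedlund structure of Theorem \ref{morse periodic} and \eqref{eq hyperbolicity}, into exactly such heteroclinic-cycle form. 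Comparing our $\gamma$ against such near-minimizers as $n\to\infty$ identifies $\sum(T^{s}_i-S^{s}_i)$ with the phase shift of the optimal cycle -- or, if several heteroclinic cycles are available, one optimizes the choice to obtain the desired upper bound. Substituting back into the length estimate completes the proof.
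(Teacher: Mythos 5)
Your construction of $\gamma$ and the length bookkeeping (truncated heteroclinic segments with windows of size $\sig(z)(n_i-n_{i-1})/2$, short bridges of length at most $C e^{-\lam\cdot}$ near the periodic strands, total periodic contribution $\sig(nz)$, bridge contribution equal to the exponential error term) coincide with the paper's argument. But the step you yourself flag as the ``main obstacle'', namely
\[
\sum_{i=1}^{k(z)} (T^{s}_i - S^{s}_i) \;\leq\; D^{+}\sig(z)[s z^\perp],
\]
is exactly the nontrivial content of the lemma, and your sketch for it does not close the gap. The assertion that shortest closed curves in the classes $nz+sz^\perp$ ``themselves decompose into exactly such heteroclinic-cycle form'' is neither proved by you nor needed (nor literally true as stated), and ``optimizing over available heteroclinic cycles'' is not a substitute for an argument: the inequality must hold for the particular heteroclinics $c_i^{\pm}$ you used in the construction, with the particular asymptotic phases $T_i$ furnished by \eqref{eq hyperbolicity}.

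What the paper actually does at this point is a comparison in the opposite direction: take a minimizer $\g_n$ in the class $nz+sz^\perp$, mark the parameters $S_{n,i}$ where it crosses the ordered periodic minimals $c_0<\dots<c_k=c_0+sz^\perp$, and splice each sub-arc $\g_n|_{[S_{n,i-1},S_{n,i}]}$ with the backward ray of $c_{i-1}$ and the forward ray of $c_i$ to obtain heteroclinic competitor curves $\g_n^i$. Since each $c_i^{\pm}$ is a \emph{minimal} geodesic, its restriction to a window $[-m\theta, m\theta+T_i]$ minimizes length between its endpoints, and by \eqref{eq hyperbolicity} these endpoints are exponentially close to points on $\g_n^i$; comparing lengths over such windows, summing over $i$ and letting $m\to\infty$, the periodic contributions cancel and one is left with
\[
\sum_{i=1}^{k} T_i \;\leq\; \sig(nz+sz^\perp)-\sig(nz) \qquad \text{for every } n ,
\]
whence the claim follows from $D^{+}\sig(z)[sz^\perp]=\lim_{n\to\infty}\bigl(\sig(nz+sz^\perp)-\sig(nz)\bigr)$, which you correctly identified. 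Without this (or an equivalent) comparison argument, your proposal establishes only the trivial part of the estimate, so as written it has a genuine gap.
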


\begin{proof}
 We first consider the case $s=1$. Set $k=k(z)$, let $c_0,...,c_{k-1}$ be the ordered sequence of periodic minimals between $c_0$ and $c_k := c_0+z^\perp$. Consider heteroclinic minimals $c_i^+$ connecting $c_{i-1}$ to $c_i$ for $i=1,...,k$. By hyperbolicity, we find $C,\lam>0$ and $T_1,...,T_k\in\R$ with
 \begin{align}\label{using hyp in lemma}
  \begin{cases} d_F(c_{i-1}(-t),c_i^+(-t)) & \leq C \exp(-\lam t) \\
  d_F(c_i^+(T_i+t),c_i(t)) & \leq C \exp(-\lam t) \end{cases}
 \end{align}
 for all $t\geq 0$. Here we change the parameter of the $c_i^+$ if necessary to have $T_i$ only in the second line. Set $\theta=\sig(z)$ and given the integers $n_i$, define
 \[ S_i := \theta \cdot \frac{n_{i}-n_{i-1}}{2} \geq 0, \quad i=1,...,k , \qquad S_0:=S_k . \]
 Consider minimal geodesic segments $\delta_i,\e_i$ for $i=1,...,k$, connecting
 \[ \delta_i : c_{i-1}(-S_{i-1}) \to c_i^+(-S_{i-1}) , \qquad \e_i : c_i^+(T_i+S_i) \to c_i(S_i) , \]
 and set
 \begin{align*}
  \g := & ~ [~ n_{0} \cdot z + (\delta_1 * c_1^+|_{[-S_0,T_1+S_1]} * \e_1) ~] \\
  * & ~ ... \\
  * & ~ [~ n_{i-1} \cdot z + (\delta_i * c_i^+|_{[-S_{i-1},T_i+S_i]} * \e_i) ~] \\
  * & ~ ... \\
  * & ~ [ ~ n_{k-1} \cdot z + (\delta_k * c_k^+|_{[-S_{k-1},T_k+S_k]} * \e_k) ~] .
 \end{align*}
 Here we wrote $*$ for the concatenation of curves. By definition of $S_i$ and $\theta$ being the period of $c_i$ we have for $i=1,...,k-1$, that
 \[ c_i(S_i)-c_i(-S_i) = (n_i-n_{i-1})\cdot z , \]
 showing
 \[ n_{i-1} \cdot z + c_i(S_i) = n_i \cdot z + c_i(-S_i) . \]
 Hence, the curve $\g$ is a continuous, piecewise $C^1$-connection from $c_{0}(-S_0)$ to $n_{k-1} \cdot z + c_k(S_k)$. Moreover, by $c_k=c_0+z^\perp$ and $S_0=S_k$ we have
 \begin{align*}
  n_{k-1} \cdot z + c_k(S_k) & = n_{k-1} \cdot z + z^\perp + c_0(S_0) \\
  & = n_{k-1} \cdot z + z^\perp + c_0(\theta \cdot(n_k-n_{k-1}) -S_0) \\
  & = n_k \cdot z + z^\perp + c_0(-S_0) ,
 \end{align*}
 i.e.\ with $n_k=n$ the curve $\g$ is $(n z + z^\perp)$-periodic. Using the hyperbolicity and $S_k=S_0$, we find
 \begin{align}\label{formula hyp in the proof}
  \nonumber \sig(nz+z^\perp) & \leq l_F(\g) = \sum_{i=1}^k \bigg[ T_i+S_i+S_{i-1} + l_F(\delta_i) + l_F(\e_i) \bigg] \\
  & \leq \sum_{i=1}^k T_i + 2 \sum_{i=1}^k S_i + C \sum_{i=1}^k \left(\exp(-\lam S_{i-1})+\exp(-\lam S_i)\right) \\
  \nonumber & = \sum_{i=1}^k T_i + n\theta + 2C \sum_{i=1}^k  \exp\bigg(-\lam \theta \frac{n_{i}-n_{i-1}}{2} \bigg) . 
 \end{align}
 Note that $n\theta=n\sig(z)=\sig(nz)$. To finish the proof we show
 \[ \sum_{i=1}^k T_i \leq D^+\sig(z)[z^\perp] . \]
 
 Let $\g_n$ be a $(nz+z^\perp)$-periodic minimal. We find intersections (the $c_0,...,c_{k-1},c_k=c_0+z^\perp$ as before)
 \begin{align*}
  & \g_n(S_{n,i}) \in c_i(\R) 
 \end{align*}
 and w.l.o.g. we have
 \[ 0 = S_{n,0} <S_{n,1} < ... < S_{n,k} = \sig(nz+z^\perp) \]
 and
 \[ \g_n(S_{n,i}) = c_i(0) ~ \forall  i = 1,...,k-1 , \qquad \g_n(S_{n,k}) = c_k(n\theta). \]
 Define for $i=1,...,k$
 \[ \g_n^i(t) := \begin{cases}
              c_{i-1}(t) & : t\leq 0 \\
              \g_n(S_{n,i-1} + t) & : 0 \leq t \leq S_{n,i}-S_{n,i-1} \\
              c_i(t - [S_{n,i}-S_{n,i-1}]) & : t \geq S_{n,i}-S_{n,i-1} ~\&~ i < k \\
              c_i(t - [S_{n,i}-S_{n,i-1}] + n\theta ) & : t \geq S_{n,i}-S_{n,i-1} ~\&~ i = k
             \end{cases}  . \]
 The $\g_n^i$ are heteroclinic curves connecting $c_{i-1}$ to $c_i$. For large $m$, the curve $c_i^+|_{[-m\theta, m\theta + T_i]}$ connects by \eqref{using hyp in lemma} approximately the point
 \[ c_{i-1}(-m\theta) = \g_n^i(-m\theta) \]
 to the point
 \[ c_i(m\theta) = \begin{cases}
                                                                         \g_n^i(m\theta + [S_{n,i}-S_{n,i-1}]) & : i < k \\
                                                                         \g_n^i((m-n)\theta + [S_{n,i}-S_{n,i-1}]) & : i = k
                                                                        \end{cases} . \]
 By the minimality of the curves $c_i^+$ we find for all $n$
 \begin{align*}
  0 & \leq \lim_{m\to\infty} \begin{pmatrix} \sum_{i=1}^{k-1} l_F(\g_n^i;[-m\theta, m\theta + S_{n,i}-S_{n,i-1}]) \\
  + l_F(\g_n^k;[-m\theta, (m-n)\theta + S_{n,k}-S_{n,k-1} ]) \\
  - \sum_{i=1}^k l_F(c_i^+;[-m\theta, m\theta + T_i]) \end{pmatrix}\\
  & = \lim_{m\to\infty} \begin{pmatrix} \sum_{i=1}^{k-1} 2m\theta + S_{n,i}-S_{n,i-1} \\
  + (2m-n)\theta + S_{n,k}-S_{n,k-1} \\
  - \sum_{i=1}^k 2m\theta + T_i \end{pmatrix} \\
  & = \lim_{m\to\infty} S_{n,k}-S_{n,0}  - n\theta - \sum_{i=1}^k T_i \\
  & = \sig(nz+z^\perp)-\sig(nz) - \sum_{i=1}^k T_i .
 \end{align*}
 The claim follows, observing that
 \begin{align}\label{formula d+sig mit n}
  D^+\sig(z)[z^\perp] = \lim_{n\to\infty} \frac{\sig(z+\frac{1}{n}z^\perp)-\sig(z)}{\frac{1}{n}} = \lim_{n\to\infty} \sig(nz+ z^\perp)-\sig(nz) . 
 \end{align}
 
 The estimates for a $(nz-z^\perp)$-periodic curve, i.e.\ the case $s=-1$ follow by the same lines just using the heteroclinics $c_i^-$.
\end{proof}

\begin{remark}
 In the proof of Lemma \ref{lemma stable norm nz pm z perp} we showed $\sum T_i \leq D^+\sig(z)[z^\perp]$. The reverse inequality also holds, which can be seen using the curve $\g$ associated to the integers $n_i = i \cdot \lfloor n/k\rfloor$ for $i<k$ and then applying the estimate \eqref{formula hyp in the proof} to the formula \eqref{formula d+sig mit n}.
\end{remark}

We need to refine Lemma \ref{lemma stable norm nz pm z perp}.

\begin{lemma}\label{lemma stable norm nz + m z perp}
 Let $a\in [-1,1]$ and $z\in \Z^2-\{0\}$, such that $\M^{per}(z)$ is a hyperbolic set with hyperbolicity constants $C,\lam$ in \eqref{eq hyperbolicity}. Then with the Gaussian bracket $\lfloor x \rfloor = \max\{n\in\Z : n\leq x\}$ 
 \begin{align*}
  & \sig(z+ a z^\perp) \leq \sig(z) + D^+\sig(z)[a z^\perp] + 2C |a| k(z) \exp\left(-\frac{\lam \sig(z)}{2} \left\lfloor \frac{1}{|a|k(z)} \right\rfloor \right) .
 \end{align*}
\end{lemma}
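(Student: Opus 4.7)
The plan is to derive the inequality from Lemma \ref{lemma stable norm nz pm z perp} by first handling the case $a = s/n$ with $s \in \{-1,1\}$ and $n \in \N$ via homogeneity, and then extending to general $a \in [-1,1]$ using the convexity of $\sig$. For $a = 0$ the statement is trivial, so assume $a \neq 0$.

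First I would apply Lemma \ref{lemma stable norm nz pm z perp} with a balanced choice of integers: writing $n = qk(z) + r$ with $0 \leq r < k(z)$, take $r$ increments $n_i - n_{i-1}$ of size $q+1$ and $k(z) - r$ of size $q$, so that every increment is at least $\lfloor n/k(z) \rfloor$. The exponential sum is then bounded by $k(z)\exp\bigl(-\tfrac{\lam\sig(z)}{2}\lfloor n/k(z)\rfloor\bigr)$, giving
\[
\sig(nz + sz^\perp) \leq \sig(nz) + D^+\sig(z)[sz^\perp] + 2Ck(z)\exp\!\left(-\tfrac{\lam\sig(z)}{2}\left\lfloor\tfrac{n}{k(z)}\right\rfloor\right).
\]
Dividing through by $n$ and invoking the positive homogeneity of both $\sig$ and $D^+\sig(z)[\,\cdot\,]$ yields the lemma in the case $|a| = 1/n$.

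For arbitrary $a \in (0,1]$ I would then set $n = \lfloor 1/a \rfloor \geq 1$, so $a \leq 1/n$, and write
\[
z + az^\perp = (1 - an)\,z + an\,\bigl(z + \tfrac{1}{n}z^\perp\bigr)
\]
as a convex combination with coefficient $an \in (0,1]$. Convexity of $\sig$ then gives $\sig(z + az^\perp) - \sig(z) \leq an\bigl[\sig(z + \tfrac{1}{n}z^\perp) - \sig(z)\bigr]$. Substituting the bound from the previous paragraph and using $a\,D^+\sig(z)[z^\perp] = D^+\sig(z)[az^\perp]$ (positive homogeneity for $a > 0$) produces the estimate with $\lfloor n/k(z)\rfloor$ in the exponent. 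The elementary identity $\lfloor \lfloor x\rfloor / k\rfloor = \lfloor x/k\rfloor$ for $k \in \N$ (applied with $x = 1/a$, $k = k(z)$) then gives $\lfloor n/k(z) \rfloor = \lfloor 1/(ak(z))\rfloor$, matching the claim. The case $a \in [-1,0)$ is entirely symmetric: run the same argument with $s = -1$ and the heteroclinics $c_i^-$, using $|a|\,D^+\sig(z)[-z^\perp] = D^+\sig(z)[az^\perp]$.

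The only delicate point is aligning the floor terms. The choice $n = \lfloor 1/a \rfloor$ is forced by the requirement that $an$ lie in $[0,1]$, so that the convex combination makes sense; it is a happy coincidence (via the elementary floor identity) that this same choice makes $\lfloor n/k(z)\rfloor$ and $\lfloor 1/(ak(z))\rfloor$ agree on the nose. No new dynamical input beyond Lemma \ref{lemma stable norm nz pm z perp} is needed: the present lemma is purely a rescaling and convex interpolation of its predecessor.
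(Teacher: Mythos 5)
Your proof is correct, and it takes a genuinely different route from the paper's for the reduction to general $a$. Both arguments use Lemma \ref{lemma stable norm nz pm z perp} as the sole dynamical input, and your balanced choice of increments (each at least $\lfloor n/k(z)\rfloor$) in the case $a=\pm 1/n$ is sound. The paper, however, does not stop at $M=1$: it concatenates $M-1$ translates of the curve $\g$ with one corrected copy $\g^*$ to produce a closed curve in the class $Nz+sMz^\perp$, obtaining the estimate $\sig(Nz+sMz^\perp)\leq \sig(Nz)+M D^+\sig(z)[sz^\perp]+2CMk\exp\bigl(-\tfrac{\lam\sig(z)}{2}\lfloor N/(Mk)\rfloor\bigr)$, and then passes to the limit along rational approximations $M_n/N_n\nearrow|a|$ with $N_n/M_n\geq 1/|a|$, using continuity of $\sig$ and monotonicity of the floor. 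You replace that construction and limiting procedure by the convexity of the stable norm, writing $z+az^\perp$ as a convex combination of $z$ and $z+\tfrac1n z^\perp$ with $n=\lfloor 1/a\rfloor$, and then use the identity $\lfloor\lfloor x\rfloor/k\rfloor=\lfloor x/k\rfloor$ to match the exponent exactly; all the ingredients (convexity and positive homogeneity of $\sig$, homogeneity of $D^+\sig(z)[\,\cdot\,]$, $an\in(0,1]$) are available, and the sign cases are handled correctly via $s=\sign(a)$. Your route is shorter and avoids the auxiliary curve $\Gamma$ altogether; the paper's route yields in passing the more explicit two-parameter bound on $\sig(Nz+sMz^\perp)$, but that intermediate statement is not used elsewhere, so your argument fully suffices for the application in Theorem \ref{thm rational}.
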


\begin{proof}
 Consider integers $N\geq M\geq 1$ and write $k=k(z)$. We choose integers $n_i, n_i^*$ defined as
 \begin{align*}
 & n_i := i\cdot \left\lfloor \frac{N}{Mk} \right\rfloor , \qquad i = 0,...,k ,&& \\
 & n_i^*:= n_i , \qquad i = 0,...,k-1, && n_k^* := N-(M-1)\cdot n_k .
 \end{align*}
 Observe that by $N\geq M$
 \[ n_k^* = N-(M-1)\cdot k\cdot \left\lfloor \frac{N}{Mk} \right\rfloor \geq k\cdot \left\lfloor \frac{N}{Mk} \right\rfloor = n_{k-1}^* + \left\lfloor \frac{N}{Mk} \right\rfloor . \]
 For $s\in \{-1,1\}$ let $\g$ be the $(n_kz+sz^\perp)$-periodic curve from Lemma \ref{lemma stable norm nz pm z perp} associated to the integers $n_i$ and analogously $\g^*$ the $(n_k^*z+sz^\perp)$-periodic curve associated to the integers $n_i^*$. Consider the new curve
 \begin{align*}
  \Gamma := ~ & \g * (\g + [n_k z + sz^\perp]) * ...  * (\g + (M-2)[n_k z + sz^\perp]) \\
   & \qquad \qquad\qquad\qquad\qquad\qquad\qquad * (\g^* + (M-1)[n_k z + sz^\perp]) .
 \end{align*}
 We find for the homotopy class
 \[ [\Gamma] = (M-1) (n_kz+sz^\perp) + (n_k^*z+sz^\perp) =Nz+sMz^\perp . \]
 Note that $n_i-n_{i-1}, n_i^*-n_{i-1}^* \geq \left\lfloor \frac{N}{Mk} \right\rfloor$, so that with Lemma \ref{lemma stable norm nz pm z perp}
 \begin{align*}
  & \sig(Nz+ sMz^\perp) \\
  & \leq l_F(\Gamma) = (M-1) l_F(\g) + l_F(\g^*) \\
  & \leq (M-1)\sig(n_kz) + \sig(n_k^*z) + M D^+\sig(z)[s z^\perp] \\
  & \qquad\qquad\qquad\qquad\qquad\qquad\qquad + 2CM k \exp\left(-\frac{\lam \sig(z)}{2} \left\lfloor \frac{N}{Mk} \right\rfloor \right) \\
  & = \sig(Nz) + M D^+\sig(z)[s z^\perp] + 2CM k \exp\left(-\frac{\lam \sig(z)}{2} \left\lfloor \frac{N}{Mk} \right\rfloor \right) .
 \end{align*}

 Now let $a \in [-1,1]-\{0\}$ be arbitrary (note that the lemma is trivial for $a=0$). Let $s=\sign(a)$ and choose sequences of integers $N_n\geq M_n\geq 1$ with $M_n/N_n \nearrow |a|$, so that
 \[ (1,|a|) = \lim_{n\to\infty} \frac{1}{N_n} (N_n,M_n) , \qquad \frac{N_n}{M_n}\geq \frac{1}{|a|}.\]
 We find by the continuity of $\sig$, monotonicity of $\lfloor . \rfloor$ and homogeneity of $D^+\sig(z)[.]$
 \begin{align*}
  & \sig(z+ a z^\perp) = \lim_{n\to\infty} \frac{1}{N_n}  \sig( N_n z + s M_n z^\perp) \\
  & \leq \lim_{n\to\infty} \frac{1}{N_n} \left[ \sig(N_nz) + M_n  D^+\sig(z)[s z^\perp] + 2CM_n k \exp\left(-\frac{\lam \sig(z)}{2} \left\lfloor \frac{1}{|a| k} \right\rfloor \right) \right] \\
  & = \sig(z) + D^+\sig(z)[s |a| z^\perp] + 2C |a| k \exp\left(-\frac{\lam \sig(z)}{2} \left\lfloor \frac{1}{|a|k} \right\rfloor \right) .
 \end{align*}
 The lemma follows.
\end{proof}

Let us prove a simple lemma, which will be useful again later.

\begin{lemma}\label{lemma diff sig radially}
 Fix $\xi\in \R^2-\{0\}$ and consider the maps
 \[ \Pi_1, \Pi_2 : \R^2 \to \R^2 , \quad \Pi_1(v) := (|\xi|+\la v,\tfrac{\xi}{|\xi|} \ra) \tfrac{\xi}{|\xi|} , \quad \Pi_2(v) := v - \la v,\tfrac{\xi}{|\xi|} \ra \tfrac{\xi}{|\xi|} . \]
 Then 
 \begin{enumerate}[(i)]
  \item\label{lemma diff sig radially i} $\xi + v = \Pi_1(v) + \Pi_2(v)$ and $\Pi_1(v) \perp \Pi_2(v)$,
  
  \item\label{lemma diff sig radially ii} for $|v|\leq|\xi|$ we have
   \begin{align*}
    |\Pi_1(v)| & = |\xi|+ \la v,\tfrac{\xi}{|\xi|} \ra , \qquad |\Pi_2(v)| = \left| \la v,\tfrac{\xi^\perp}{|\xi|} \ra \right| .
   \end{align*}
   
  \item\label{lemma diff sig radially iii} for $|v|<|\xi|$ we have
    \begin{align*}
     & \sig(\xi+v) - \sig(\xi) - D^+\sig(\xi)[v] \\
     & = \sig(\Pi_1(v) + \Pi_2(v)) - \sig(\Pi_1(v)) - D^+\sig(\Pi_1(v))[\Pi_2(v)] .
    \end{align*}
 \end{enumerate}
\end{lemma}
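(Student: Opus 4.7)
\textbf{Plan for Lemma \ref{lemma diff sig radially}.} The three parts have very different flavors: (i) is pure linear algebra in the orthonormal frame $\{\xi/|\xi|,\xi^\perp/|\xi|\}$, (ii) is Cauchy--Schwarz, and only (iii) uses anything about $\sig$. My plan is to dispose of (i) and (ii) quickly and then reduce (iii) to two homogeneity properties of the stable norm.

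For (i), I would substitute the definitions and check directly that $\Pi_1(v)+\Pi_2(v) = \tfrac{\xi}{|\xi|}\cdot|\xi| + \la v,\tfrac{\xi}{|\xi|}\ra \tfrac{\xi}{|\xi|} + v - \la v,\tfrac{\xi}{|\xi|}\ra \tfrac{\xi}{|\xi|} = \xi+v$. Orthogonality is immediate since $\Pi_1(v)\in\R\xi$ while $\Pi_2(v)$ is the orthogonal projection of $v$ onto $\R\xi^\perp$, so in fact $\Pi_2(v) = \la v,\tfrac{\xi^\perp}{|\xi|}\ra\tfrac{\xi^\perp}{|\xi|}$. For (ii), the formula for $|\Pi_2(v)|$ is then obvious since $\xi^\perp/|\xi|$ is a unit vector, and for $|\Pi_1(v)|$ one only needs the scalar $|\xi|+\la v,\tfrac{\xi}{|\xi|}\ra$ to be nonnegative; this follows from Cauchy--Schwarz and the hypothesis $|v|\le|\xi|$.

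For (iii), the key is that the only way $\Pi_1(v)$ differs from $\xi$ is by a positive scaling. Set $\mu := 1 + \la v,\xi\ra/|\xi|^2$, so that $\Pi_1(v) = \mu\xi$; the hypothesis $|v|<|\xi|$ with Cauchy--Schwarz gives $\mu>0$. I then plan to use the following two identities, both of which follow directly from the definition of $D^+\sig$ together with positive homogeneity of $\sig$:
\begin{enumerate}[(a)]
 \item $\sig(\mu\xi) = \mu\sig(\xi)$ and $D^+\sig(\mu\xi)[w] = D^+\sig(\xi)[w]$ for every $w\in\R^2$; the second identity is obtained by substituting $s=t/\mu$ in $\lim_{t\searrow 0}(\sig(\mu\xi+tw)-\sig(\mu\xi))/t$ and pulling out the factor $\mu$.
 \item $D^+\sig(\xi)[\alpha\xi + w] = \alpha\sig(\xi) + D^+\sig(\xi)[w]$ for all $\alpha\in\R$, $w\in\R^2$; this follows from the substitution $s = t/(1+t\alpha)$ in the definition of $D^+\sig(\xi)$ applied to the direction $\alpha\xi+w$, using $\sig((1+t\alpha)\xi+tw) = (1+t\alpha)\sig(\xi+sw)$ (valid for small $t>0$ since $1+t\alpha>0$).
\end{enumerate}
Writing $v = (\mu-1)\xi + \Pi_2(v)$ and applying (b) gives $D^+\sig(\xi)[v] = (\mu-1)\sig(\xi) + D^+\sig(\xi)[\Pi_2(v)]$. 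Combined with (a), the right-hand side of (iii) becomes $\sig(\xi+v)-\mu\sig(\xi)-D^+\sig(\xi)[\Pi_2(v)] = \sig(\xi+v)-\sig(\xi)-D^+\sig(\xi)[v]$, which is the left-hand side.

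The only step requiring any care is (iii), and within it the only potential obstacle is verifying the auxiliary identity (b); once that is in hand the rest is bookkeeping. Nothing here uses the slope of $\xi$ or any regularity of $\sig$ beyond convexity and positive homogeneity, so the lemma holds for every $\xi\in\R^2-\{0\}$.
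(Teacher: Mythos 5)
Your proposal is correct and follows essentially the same route as the paper: the paper's proof of (iii) establishes exactly your identity (b), $D^+\sig(\xi)[a\xi+w]=a\sig(\xi)+D^+\sig(\xi)[w]$, by the same substitution-and-homogeneity computation, and then combines it with the homogeneity facts in your (a) (which the paper records just before Lemma \ref{lemma stable norm nz pm z perp}) to identify the two sides, with $|v|<|\xi|$ used precisely to make $\Pi_1(v)$ a positive multiple of $\xi$. Parts (i) and (ii) are handled as in the paper, by direct computation.
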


\begin{proof}
 Items \eqref{lemma diff sig radially i} and \eqref{lemma diff sig radially ii} follow directly from the definitions. Let us prove \eqref{lemma diff sig radially iii}. Using homogeneity of $\sig$ we find for $a\in\R$
 \begin{align*}
  D^+\sig(\xi)[a\xi+v] & = \lim_{t\searrow 0}\frac{\sig((1+ta)\xi+tv)-\sig(\xi)}{t} \\
  & = \lim_{t\searrow 0}\frac{(1+ta)\sig(\xi+\frac{t}{1+ta}v)- (1+ta)\sig(\xi) +ta\sig(\xi)}{t} \\
  & = \lim_{t\searrow 0}\frac{\sig(\xi+\frac{t}{1+ta}v)- \sig(\xi) }{\frac{t}{1+ta}} +a\sig(\xi) \\
  & = a\sig(\xi) + D^+\sig(\xi)[v] .
 \end{align*}
 Then using $|v|<|\xi|$ and homogeneity it follows that
 \begin{align*}
  & \sig(\xi+v) - \sig(\xi) - D^+\sig(\xi)[v] \\
  & = \sig(\Pi_1(v) + \Pi_2(v)) - \sig(\xi) - \left(\la v,\tfrac{\xi}{|\xi|^2} \ra \sig(\xi) + D^+\sig(\xi)[\Pi_2(v)]\right) \\
  & = \sig(\Pi_1(v) + \Pi_2(v)) - \sig(\Pi_1(v)) - D^+\sig(\Pi_1(v))[\Pi_2(v)] .
 \end{align*}
\end{proof}

We can now prove the main result of Subsection \ref{section rational}.

\begin{thm}\label{thm rational}
 Let $\xi,v\in \R^2$ with $\xi\neq 0$ having rational or infinite slope, such that $\M^{per}(\xi)$ is uniformly hyperbolic. Moreover, let $z$ be the prime element in $\Z^2\cap \R_{>0}\xi$ and $C,\lam$ the hyperbolicity constants from \eqref{eq hyperbolicity}. Then for all $v\in \R^2$ with $\left|\la v,\tfrac{\xi^\perp}{|\xi|} \ra\right| < |\xi| +\la v, \tfrac{\xi}{|\xi|} \ra$
 \begin{align*}
  & \sig(\xi+v) - \sig(\xi) - D^+\sig(\xi)[v] \\
  & \leq \left|\la v,\tfrac{\xi^\perp}{|\xi|} \ra\right| \cdot \frac{k(z)}{|z|} \cdot 2C \cdot \exp\left(-\frac{\lam \sig(z)}{2} \cdot \left\lfloor \frac{ |\xi| +\la v, \tfrac{\xi}{|\xi|} \ra }{\left|\la v,\tfrac{\xi^\perp}{|\xi|} \ra\right| \cdot k(z)}  \right\rfloor \right) .
 \end{align*}
\end{thm}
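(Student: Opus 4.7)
The plan is to combine Lemma \ref{lemma diff sig radially} with Lemma \ref{lemma stable norm nz + m z perp} via homogeneity. The hypothesis of the theorem is exactly the condition $|\Pi_2(v)|<|\Pi_1(v)|$ (equivalently $|v|<|\xi|$ along the $\xi$--direction in the sense needed), so Lemma \ref{lemma diff sig radially}\eqref{lemma diff sig radially iii} applies and reduces the left hand side to
\[
\sig(\Pi_1(v)+\Pi_2(v))-\sig(\Pi_1(v))-D^+\sig(\Pi_1(v))[\Pi_2(v)].
\]
Since $\xi\in\R_{>0}z$, the vectors $\Pi_1(v)$ and $\Pi_2(v)$ are parallel to $z$ and $z^\perp$ respectively; writing $\mu := |\Pi_1(v)|/|z|>0$ and $a := \pm|\Pi_2(v)|/|\Pi_1(v)|\in(-1,1)$, one has $\Pi_1(v)=\mu z$ and $\Pi_2(v)=\mu a z^\perp$.

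Next I would invoke homogeneity of $\sig$ and the scaling identity $D^+\sig(\mu z)[\mu w]=\mu D^+\sig(z)[w]$ for $\mu>0$ stated right after the definition of $D^+\sig$ (applied to $w = a z^\perp$). This gives
\[
\sig(\Pi_1(v)+\Pi_2(v))-\sig(\Pi_1(v))-D^+\sig(\Pi_1(v))[\Pi_2(v))]
=\mu\bigl[\sig(z+a z^\perp)-\sig(z)-D^+\sig(z)[a z^\perp]\bigr].
\]
Now Lemma \ref{lemma stable norm nz + m z perp}, which applies since $a\in[-1,1]$ and $\M^{per}(z)$ is uniformly hyperbolic with the given constants, bounds the bracketed expression by
\[
2C|a|k(z)\exp\!\left(-\tfrac{\lam\sig(z)}{2}\left\lfloor\tfrac{1}{|a|k(z)}\right\rfloor\right).
\]

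Finally I would substitute back. The prefactor becomes
\[
\mu\cdot 2C|a|k(z)
=2C\,k(z)\cdot\frac{|\Pi_2(v)|}{|z|}
=\frac{k(z)}{|z|}\cdot 2C\cdot\left|\left\langle v,\tfrac{\xi^\perp}{|\xi|}\right\rangle\right|
\]
using Lemma \ref{lemma diff sig radially}\eqref{lemma diff sig radially ii}, and inside the exponential
\[
\frac{1}{|a|k(z)}
=\frac{|\Pi_1(v)|}{|\Pi_2(v)|\,k(z)}
=\frac{|\xi|+\langle v,\tfrac{\xi}{|\xi|}\rangle}{|\langle v,\tfrac{\xi^\perp}{|\xi|}\rangle|\,k(z)},
\]
which is exactly the quantity appearing inside the floor function in the statement. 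Collecting these substitutions yields the asserted inequality.

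I do not expect a genuine obstacle: the result is really just Lemma \ref{lemma stable norm nz + m z perp} transported from the prime vector $z$ to the arbitrary $\xi\in\R_{>0}z$ by positive scaling, and from the axial perturbation $a z^\perp$ to a general perturbation $v$ by projecting onto the $\xi,\xi^\perp$ frame. The only point requiring care is the bookkeeping of the positive homogeneity of $D^+\sig$ together with the fact that $\sig(\xi+v)-\sig(\xi)-D^+\sig(\xi)[v]$ is invariant under adding a multiple of $\xi$ to $v$, which is precisely the content of Lemma \ref{lemma diff sig radially}\eqref{lemma diff sig radially iii} and allows the radial component of $v$ to be absorbed into the base point.
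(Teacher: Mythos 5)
Your proposal is correct and follows essentially the same route as the paper: the paper likewise combines Lemma \ref{lemma stable norm nz + m z perp} (after rescaling from $\xi$ to the prime vector $z$ by positive homogeneity of $\sig$ and $D^+\sig$) with the radial/perpendicular decomposition of Lemma \ref{lemma diff sig radially}, the only difference being that the paper treats the case $v\perp\xi$ first and then reduces the general case to it, whereas you apply Lemma \ref{lemma diff sig radially}\eqref{lemma diff sig radially iii} first and then rescale — an immaterial reordering. Your bookkeeping of the prefactor and of the quantity inside the floor matches the paper's computation exactly.
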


\begin{proof}
 We first prove the theorem for $v\perp \xi$ and choose some $a>0$ and $b\in \R$ with
 \[ \xi = a \cdot z , \qquad v = b \cdot z^\perp . \]
 Then by Lemma \ref{lemma stable norm nz + m z perp}, $D^+\sig(z)[.]=D^+\sig(\xi)[.]$ and $a=|\xi|/|z| > |b|=|v|/|z|$
 \begin{align*}
  \sig(\xi+v) & = a \cdot \sig \left( z + \frac{b}{a}z^\perp \right) \\
  & \leq a \bigg[ \sig\left( z \right) + D^+\sig(z)\left[ \frac{b}{a} z^\perp \right] \\
  & \qquad\qquad\qquad\qquad\qquad + 2C  \frac{|b|}{a} k(z) \cdot \exp\left(-\frac{\lam \sig(z)}{2} \left\lfloor \frac{a}{|b| k(z)}  \right\rfloor \right) \bigg] \\
  & = \sig\left( \xi \right) + D^+\sig(\xi)\left[ v \right] + 2C |v|\frac{k(z)}{|z|}   \cdot \exp\left(-\frac{\lam \sig(z)}{2} \left\lfloor \frac{|\xi|}{|v| k(z)}  \right\rfloor \right) .
 \end{align*}

 Let now $v\in\R^2$ with $\left|\la v,\tfrac{\xi^\perp}{|\xi|} \ra\right| = |\Pi_2(v)| < |\xi| +\la v, \tfrac{\xi}{|\xi|} \ra = |\Pi_1(v)|$. We can reduce this case to the case $v\perp\xi$ by using Lemma \ref{lemma diff sig radially}. Namely,
 \begin{align*}
  & \sig(\xi+v) - \sig(\xi) - D^+\sig(\xi)[v] \\
  & = \sig(\Pi_1(v) + \Pi_2(v)) - \sig(\Pi_1(v)) - D^+\sig(\Pi_1(v))[\Pi_2(v)] \\
  & \leq 2C|\Pi_2(v)|\frac{k(z)}{|z|} \cdot \exp\left(-\frac{\lam \sig(z)}{2} \left\lfloor \frac{|\Pi_1(v)|}{|\Pi_2(v)| k(z)}  \right\rfloor \right) .
 \end{align*}
 This proves the theorem using the formulae in Lemma \ref{lemma diff sig radially}.
\end{proof}

We can now easily prove Theorem \ref{intro rational hyp} from the introduction.

\begin{proof}[Proof of Theorem \ref{intro rational hyp}]
 Recall that
 \[ x/2\leq \lfloor x\rfloor \qquad \forall x\geq 1 . \]
 Also note that $\left|\la v,\tfrac{\xi^\perp}{|\xi|} \ra \right| \leq |v|$. With $|\xi|+\la v, \tfrac{\xi}{|\xi|}\ra \geq \frac{|\xi|}{2}$ for $|v|\leq |\xi|/2$ we find by the monotonicity of $\lfloor . \rfloor$
 \begin{align*}
  \left\lfloor \frac{ |\xi| +\la v, \tfrac{\xi}{|\xi|} \ra }{\left|\la v,\tfrac{\xi^\perp}{|\xi|} \ra\right| k(z)}  \right\rfloor \geq \left\lfloor \frac{ |\xi|}{2 |v|  k(z)} \right\rfloor \geq \frac{ |\xi|}{4 |v|  k(z)}
 \end{align*}
 for $\frac{ |\xi|}{2 |v|  k(z)} \geq 1$, or equivalently $|v| \leq \frac{ |\xi|}{2 k(z)}$. We apply Theorem \ref{thm rational} and find
 \begin{align*}
  \sig(\xi+v) - \sig(\xi) - D^+\sig(\xi)[v] & \leq  |v| 2C\frac{k(z)}{|z|} \exp\left(-\frac{\lam \sig(z) |\xi|}{8 k(z)} \frac{ 1}{|v| } \right) .
 \end{align*}
 The claim follows.
\end{proof}

\subsection{The irrational case}

In order to see hyperbolicity in irrational directions using the stable norm, we will approximate irrationals by rationals and then use the results of the previous Subsection \ref{section rational}. We shall use the following well-known continuity property of the sets $\M(\xi)$, based on the continuity of the asymptotic direction $\delta^+: \cup_{\xi\in S^1}\M(\xi)\to S^1$ and on the closedness of the minimality-condition. We omit the proof; see e.g.\ Corollary 3.16 in \cite{bangert}.

\begin{lemma}\label{M upper semi-cont}
 Let $\xi_n\to\xi$ in $S^1$ and consider any sequence $v_n\in \M(\xi_n)$. Then any limit point of $\{v_n\}$ lies in $\M(\xi)$. Equivalently, if $\xi_n\to\xi$, then for any open neighborhood $U\subset S\T$ of $\M(\xi)$, there exists $n_0\in \N$, such that $\M(\xi_n)\subset U$ for $n\geq n_0$.
\end{lemma}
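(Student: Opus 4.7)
The plan is to extract a convergent subsequence and invoke the two properties cited in the statement. Suppose $v_{n_k}\to v_*$ in $S\T$ along some subsequence. To show $v_* \in \M(\xi)$, I verify the two defining conditions separately: that $c_{v_*}$ is a minimal geodesic and that $\delta^+(c_{v_*})=\xi$.

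For minimality I would use continuous dependence of the geodesic flow $\phi_F^t$ on initial data: on any bounded interval $[a,b]$, the lifted geodesics $\tilde c_{v_{n_k}}:[a,b]\to\R^2$ converge in $C^1$ to $\tilde c_{v_*}$. Since $F$ is continuous, $l_F(\tilde c_{v_{n_k}};[a,b]) \to l_F(\tilde c_{v_*};[a,b])$, and the Finsler distance $d_F$ on $\R^2$ is continuous in its endpoints, so $d_F(\tilde c_{v_{n_k}}(a),\tilde c_{v_{n_k}}(b)) \to d_F(\tilde c_{v_*}(a),\tilde c_{v_*}(b))$. The equality $l_F=d_F$ expressing minimality therefore passes to the limit on every interval $[a,b]$, so $c_{v_*}$ is minimal. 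For the asymptotic direction, both the $v_{n_k}$ and $v_*$ now lie in the domain $\bigcup_\xi \M(\xi)$ of $\delta^+$; the continuity of $\delta^+$ there yields
\[ \delta^+(v_*) = \lim_{k\to\infty} \delta^+(v_{n_k}) = \lim_{k\to\infty} \xi_{n_k} = \xi, \]
which places $v_*$ in $\M(\xi)$.

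The equivalent formulation follows by a standard compactness argument. If, for contradiction, some open neighborhood $U\subset S\T$ of $\M(\xi)$ failed to satisfy $\M(\xi_n)\subset U$ for all large $n$, I would pick $v_n \in \M(\xi_n) \setminus U$ along a subsequence. Since $S\T$ is compact, after passing to a further subsequence $v_n\to v_*$ with $v_* \in S\T\setminus U$ (the latter being closed). The first part of the lemma then forces $v_* \in \M(\xi) \subset U$, which is the desired contradiction.

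The main obstacle is confirming that the two invoked ingredients are genuinely available. Preservation of minimality is routine once one has continuity of $l_F$ and $d_F$ under $C^1$ convergence on compact intervals. Continuity of $\delta^+$ on the space of minimal geodesics is the more substantial fact, but this is precisely the classical Hedlund/Bangert statement referenced just before the lemma (existence and continuous dependence of the asymptotic direction for minimal geodesics on $\T$), and invoking it as a black box reduces the proof to the two-line argument above.
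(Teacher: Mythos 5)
Your argument is correct and follows exactly the route the paper indicates for its omitted proof: closedness of the minimality condition under $C^1$-convergence on compact intervals, plus continuity of $\delta^+$ on $\bigcup_{\xi\in S^1}\M(\xi)$ (the Hedlund/Bangert fact the paper cites), followed by the standard compactness argument for the neighborhood formulation. Nothing further is needed.
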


The following theorem is item \eqref{main irrat ii} of Theorem \ref{main thm intro irrat} from the introduction. In the proof, we recover Theorem \ref{thm rational} in the irrational case.

\begin{thm}\label{slope-lemma}
 Let $\xi \in \R^2-\{0\}$ have irrational slope $\xi_2/\xi_1\in \R - \Q$ and assume that the set $\M(\xi)$ is uniformly hyperbolic. Assume moreover that $k(z)=|z|^2$ for all $z\in \Z^2-\{0\}$ (see Remark \ref{remark bounds k}). Then there exist constants $C,\lam>0$, such that for all choices of rays $R \subset \R^2$ emanating from the origin (i.e.\ $R=\R_{>0} \cdot v$ for some $v\neq 0$) there exist sequences $v_n \in R$ with $v_n\to 0$, so that
 \begin{align*}
  & \sig(\xi+v_n) - \sig(\xi) - D^+\sig(\xi)[v_n] \leq |v_n|^{1/4} \cdot C \cdot \exp\left(-\lam \cdot \frac{1}{| v_n |^{1/4}} \right) .
 \end{align*}
\end{thm}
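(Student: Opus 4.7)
I will deduce Theorem \ref{slope-lemma} from the rational case (Theorem \ref{intro rational hyp}, equivalently Lemma \ref{lemma stable norm nz + m z perp}) via Diophantine approximation of $\xi$. By Lemma \ref{M upper semi-cont} together with the classical persistence of compact uniformly hyperbolic invariant sets under $C^0$-perturbations of the invariant set, there is a neighborhood $V\subset S^1$ of $\xi/|\xi|$ on which $\M(\eta)$ is uniformly hyperbolic with uniform constants $C_0,\lam_0>0$; combined with the hypothesis $k(z)=|z|^2$, this makes Theorem \ref{thm rational} and its key Lemma \ref{lemma stable norm nz + m z perp} applicable with uniform constants at every prime $z\in\Z^2$ with $z/|z|\in V$. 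Dirichlet's theorem supplies primes $z_n\in\Z^2$ with $z_n/|z_n|\to\xi/|\xi|$ and angular error $\sin\theta_n\lesssim 1/|z_n|^2$ (where $\theta_n=\angle(\xi,z_n)$); by passing to subsequences we may prescribe the sign of $\det(\xi,z_n)$.

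\textbf{Construction of $v_n$ and basic identity.} Fix a ray $R=\R_{>0}u$ with $|u|=1$. If $u=\pm\xi/|\xi|$ then $t\mapsto\sig(\xi+tu)$ is affine near zero by homogeneity and the estimate is trivial. Otherwise $\det(u,\xi)\neq 0$; selecting $z_n$ on the side of $\R_{>0}\xi$ that makes $\det(\xi,z_n)$ and $\det(u,z_n)$ have opposite signs, the linear equation $\det(\xi+tu,z_n)=0$ admits a unique positive root $t_n=-\det(\xi,z_n)/\det(u,z_n)\sim 1/|z_n|^2$. Setting $v_n:=t_n u\in R$, by construction $\xi+v_n=\alpha_n z_n$ with $\alpha_n=|\xi+v_n|/|z_n|\sim 1/|z_n|$, so $\sig(\xi+v_n)=\alpha_n\sig(z_n)$ by homogeneity. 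Since $\sig$ is differentiable at $\xi$ (Theorem \ref{thm intro mather-bangert}), $D\sig(\xi)$ is linear, and Euler's relation $D\sig(\xi)[\xi]=\sig(\xi)$ gives the clean identity
\[
 \Delta(v_n) \;:=\; \sig(\xi+v_n)-\sig(\xi)-D\sig(\xi)[v_n] \;=\; \alpha_n\bigl(\sig(z_n)-D\sig(\xi)[z_n]\bigr).
\]

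\textbf{Key exponential bound.} It remains to prove $\sig(z_n)-D\sig(\xi)[z_n]\leq C'|z_n|^{1/2}\exp(-\lam'|z_n|^{1/2})$ for constants $C',\lam'>0$. Apply Lemma \ref{lemma stable norm nz + m z perp} at an auxiliary Dirichlet approximant $z_m$ with $|z_m|\sim|z_n|^{1/2}$. Decomposing $z_n=\beta z_m+\gamma z_m^\perp$ gives $\beta\sim|z_n|/|z_m|$ and $|\gamma|\lesssim 1/|z_m|$, so the exponential factor from the lemma becomes of order $\exp(-\lam'\sig(z_m))\sim\exp(-\lam'|z_n|^{1/2})$ and its prefactor is of order $|z_n|^{1/2}$. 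Passing from the one-sided derivative $D^+\sig(z_m)$ in the lemma's bound to the linear $D\sig(\xi)$ produces additional error terms, controlled by noting that (i) the rotation-interval jump $D^+\sig(z_m)[z_m^\perp]-D^-\sig(z_m)[z_m^\perp]$ is itself exponentially small in $|z_m|$ (mirror application of Lemma \ref{lemma stable norm nz + m z perp} from both sides of the rational ray), and (ii) both $D^\pm\sig(z_m)[z_m^\perp]$ agree with $D\sig(\xi)[z_m^\perp]$ up to the same exponentially small error. Substituting into the basic identity with $\alpha_n\sim 1/|z_n|$ and $|v_n|\sim 1/|z_n|^2$ produces $\Delta(v_n)\lesssim |z_n|^{-1/2}\exp(-\lam'|z_n|^{1/2})\sim|v_n|^{1/4}C\exp(-\lam|v_n|^{-1/4})$, the claimed estimate.

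\textbf{Main obstacle.} The hard step is the exponential bound on $\sig(z_n)-D\sig(\xi)[z_n]$: since $\sig$ fails to be differentiable at $z_m$ whenever $\M^{per}(z_m)$ is hyperbolic (Theorem \ref{thm intro mather-bangert}(ii)), one must bridge the non-smooth directional-derivative structure at $z_m$ with the linear differential at the smooth irrational point $\xi$, while simultaneously respecting the Diophantine approximation error. The specific balance $|z_m|\sim|z_n|^{1/2}$, between the $1/|z_m|^2$ Dirichlet error and the $\exp(-\lam|z_m|)$ hyperbolic decay rate, is precisely what forces the exponent $1/4$ (rather than the naive $1/2$) in the final estimate.
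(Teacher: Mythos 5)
Your setup (uniform hyperbolicity constants on a neighborhood of $\xi/|\xi|$ via upper semicontinuity of $\M$, continued-fraction approximants, reduction to an exponential bound at the approximating lattice points) is sound, and your identity $\Delta(v_n)=\alpha_n\bigl(\sig(z_n)-D\sig(\xi)[z_n]\bigr)$ is correct. The gap is in the "key exponential bound". Lemma \ref{lemma stable norm nz + m z perp} only bounds the convexity defect $\sig(z_m+az_m^\perp)-\sig(z_m)-D^+\sig(z_m)[az_m^\perp]$ on each side relative to that side's \emph{own} one-sided derivative; applying it "from both sides" gives no information whatsoever about the jump $D^+\sig(z_m)[z_m^\perp]+D^+\sig(z_m)[-z_m^\perp]$ (a convex homogeneous function can satisfy the lemma's conclusion while having an arbitrarily large corner at $z_m$, since the two one-sided linearizations are just different supporting planes). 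So your claim (i) does not follow from the cited tool. Claim (ii) is worse: comparing $D^\pm\sig(z_m)[z_m^\perp]$ with $D\sig(\xi)[z_m^\perp]$ requires controlling the total variation of the derivative of $\sig$ along the whole arc between $\hat z_m$ and $\hat\xi$, which includes the corners at \emph{all} rational directions in between (each genuinely nonzero by Theorem \ref{thm intro mather-bangert}(ii), since hyperbolicity excludes foliations); an exponentially small bound on that accumulated variation is a statement of the same nature as — in fact stronger than — the theorem you are proving, so as written the argument is circular. Note also that your scheme silently needs the term $\beta\bigl(\sig(z_m)-D\sig(\xi)[z_m]\bigr)$ to be exponentially small in $|z_m|^2$; this reduces (via Euler's relation) to claim (ii) again, and any attempt to handle it by recursion over scales $|z_m|\sim|z_n|^{1/2}$ degrades the exponent at every step and never closes.

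The paper's proof avoids exactly this trap by never comparing derivatives at two different points and never touching corner sizes. It places the rational approximant $\eta$ (slope $p/q$, $|\xi-\eta|\lesssim|\xi|/q^2$) on the \emph{opposite} side of $\xi$ from the perturbation $v$, so that $\eta<\xi<\xi+v$ along the line $\xi+\Span(R)$; then two one-sided convexity inequalities, $\sig(\xi)\geq\sig(\eta)+t\,D^+\sig(\eta)[v]$ and $D^+\sig(\eta)[v]\leq D^+\sig(\xi)[v]$, transfer the defect at $\xi$ to the defect at $\eta$ measured against $\eta$'s own one-sided derivative in the direction of $v$, to which Theorem \ref{thm rational} applies directly (after checking, using $k(z)=|z|^2$, that the Gaussian bracket is at least $1$). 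The exponent $1/4$ then comes from matching $|v|\sim|\xi|/q^4$ against the approximation error $\sim|\xi|/q^2$ and the decay $\exp(-c\,q)$, and the general ray is handled by the radial reduction of Lemma \ref{lemma diff sig radially}. If you want to salvage your construction (with $\xi+v_n$ parallel to $z_n$), you would need to replace claims (i)--(ii) by a one-sided mechanism of this kind; as it stands, the bound $\sig(z_n)-D\sig(\xi)[z_n]\leq C'|z_n|^{1/2}\exp(-\lam'|z_n|^{1/2})$ is not established.
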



\begin{proof}
 By Proposition 6.4.6 on p.\ 265 in \cite{KH}, there exists an open neighborhood $U\supset\M(\xi)$, such that the set
 \[ \Lambda := \bigcap_{t\in\R} \phi_F^t(\overline U) \]
 is uniformly hyperbolic for $\phi_F^t$. If we approach $\xi$ by a sequence $\xi_n\in S^1$ with rational slopes, then for large $n$ the sets $\M(\xi_n)$ will lie in the neighborhood $U$ using the upper semi-continuity of $\del\mapsto\M(\del)$ in Lemma \ref{M upper semi-cont}. By the flow-invariance of these sets, all sets $\M(\xi_n)$ will lie in $\Lambda$ and hence be hyperbolic with the hyperbolicity constants $C,\lam>0$ of $\Lambda$ (for $n$ sufficiently large). We will use the hyperbolicity of such $\M(\xi_n)$ in the sense of \eqref{eq hyperbolicity}.

 We first assume $\xi\perp R$ in the euclidean sense. We approximate $\xi$ by a point $\eta \in \xi - R$ of rational slope $r\in \Q$. More precisely, using continued fractions one can find infinitely many approximations $r :=p/q \in \Q$ of the slope $\om := \xi_2/\xi_1\in \R-\Q$ of $\xi$ satisfying 
 \begin{align}\label{estimate diophantine}
  |\om-r|\leq \frac{1}{q(q+1)} ,
 \end{align}
 such $r$'s lying on either side of $\om$. Thus, the set 
 \[ Q=Q(\xi,R) := \{ q\in \N ~|~ \exists p \in \Z : \eta\in \xi-R \text{ has slope } r=p/q \text{ with \eqref{estimate diophantine}}\} \]
 is unbounded.

 We fix $q\in Q$ and estimate the distance $|\xi-\eta|$. Using that $\eta$ is the only point in the straight line $\xi+\Span(R)$ of slope $\eta_2/\eta_1=r$, one verifies that
 \begin{align*}
  \eta = \xi + \frac{r\xi_1-\xi_2}{\xi_1+r\xi_2} \cdot \xi^\perp = \xi + \frac{r-\om}{1+r\om} \cdot \xi^\perp .
 \end{align*}
 This shows with $r\om>0$ by $r\approx \om$ and \eqref{estimate diophantine}
 \begin{align}\label{formula xi-eta}
  \frac{|\xi-\eta|}{|\xi|} &= \frac{|\om-r|}{1+ r\om} \leq \frac{1}{q(q+1)} \cdot \frac{1}{1+ r\om}.
 \end{align}

 Next, we let $v\in R$ with
 \begin{align}\label{cond v}
  \frac{1}{2q^4} \leq \frac{|v|}{|\xi|} \leq \frac{1}{q^4} .
 \end{align}
 For the intuition observe that using the orientation given by $R$, the points $\eta,\xi,\xi+v$ are ordered along the line $\xi+\Span(R)$ as
 \begin{align}\label{eq orientation}
  \eta < \xi < \xi+v .
 \end{align}

 We shall prove the theorem in the case $\xi \perp R$ in two steps. In the first step, we estimate the Gaussian bracket appearing in our application of Theorem \ref{thm rational} from below by $1$. As the proof of Step 1 is quite technical and long, we delay it to the end of this proof of Theorem \ref{slope-lemma}.

 \abs
 
 {\em Step 1.}
 Assuming $\xi\perp R$, with $\eta, v$ satisfying \eqref{formula xi-eta}, \eqref{cond v}, $z$ being the prime element in $\Z^2\cap \R_{>0}\eta$ and $t := \frac{|\xi-\eta|}{|v|}$ we have for sufficiently large $q\in Q$, that
 \begin{align}\label{cond gaussian}
   & \frac{ |\eta| +\la (1+t)v, \tfrac{\eta}{|\eta|} \ra }{\left|\la (1+t)v ,\tfrac{\eta^\perp}{|\eta|} \ra\right| \cdot k(z)} \quad \geq \quad 1 . 
 \end{align}

 \abs
 
 {\em Step 2.}
 Assuming $\xi\perp R$, there exist constants $C',\lam'>0$ (depending only on $\xi/|\xi|\in S^1$), so that for sufficiently large $q\in Q$ and $v\in R$ satisfying \eqref{cond v} we have
 \begin{align}\label{thm for L perp xi}
  & \sig(\xi+v) - \sig(\xi) - D^+\sig(\xi)[v]  \leq |\xi|^{3/4} \cdot|v|^{1/4} \cdot C' \cdot \exp\left(-\lam' \cdot \frac{|\xi|^{1/4}}{|v|^{1/4}} \right) .
 \end{align}
 
 \begin{proof}[Proof of Step 2] 
 We let $t = \frac{|\xi-\eta|}{|v|}>0$ as above, so that $\xi=\eta+tv$ (recall \eqref{eq orientation}). By definition of $D^+\sig$ one finds
 \[ \sig(\xi) = \sig(\eta+tv) \geq \sig(\eta) + t \cdot D^+\sig(\eta)[v] . \] 
 Moreover, by convexity of $\sig$ and \eqref{eq orientation} we find
 \[ D^+\sig(\eta)[v] = \inf_{s>0} \frac{\sig(\eta+sv)-\sig(\eta)}{s} \leq \inf_{s>0} \frac{\sig(\xi+sv)-\sig(\xi)}{s} = D^+\sig(\xi)[v] . \]
 Hence,
 \begin{align}\label{sig xi to sig eta}
  & \nonumber \sig(\xi+v) - \sig(\xi) - D^+\sig(\xi)[v] \\
  & \leq \sig(\xi+v) - \sig(\eta) - t \cdot D^+\sig(\eta)[v] - D^+\sig(\eta)[v] \\
  & \nonumber = \sig(\eta+(1+t)v) - \sig(\eta) - D^+\sig(\eta)[(1+t)v] .
 \end{align}
 Our aim is to apply Theorem \ref{thm rational} to the rational direction $\eta$, the vector $(1+t)v$ and $z$ the prime element in $\Z^2\cap \R_{>0}\eta$. Choose some $\al>0$ with $\al|.|\leq F$ and note that
 \[ \sig \geq \al |.| .\]
 Choo\-sing $D> 1+\om^2$, we find for large $q$
 \[ q^2\leq |z|^2=p^2+q^2=q^2(1+r^2) \leq D q^2. \]
 Due to $t = \frac{|\xi-\eta|}{|v|}$, \eqref{formula xi-eta} and \eqref{cond v} we have
 \[ \left|\la (1+t)v,\tfrac{\eta^\perp}{|\eta|} \ra\right| \leq |(1+t)v| = |v| + |\xi-\eta| \leq \frac{|\xi|}{q^4} + \frac{1}{q(q+1)} \cdot \frac{|\xi|}{1+ r\om} \leq \frac{2|\xi|}{q^2} .\]
 Also, \eqref{cond v} implies
 \[ \frac{1}{q} \leq 2 \frac{|v|^{1/4}}{|\xi|^{1/4}} . \]
 We now apply Theorem \ref{thm rational} to \eqref{sig xi to sig eta} obtaining by Step 1 and $k(z)=|z|^2$
 \begin{align*}
  & \sig(\xi+v) - \sig(\xi) - D^+\sig(\xi)[v] \\
  & \leq \left|\la (1+t)v,\tfrac{\eta^\perp}{|\eta|} \ra\right| \cdot \frac{k(z)}{|z|} \cdot 2C \cdot \exp\left(-\frac{\lam \sig(z)}{2} \cdot \left\lfloor \frac{ |\eta| +\la (1+t) v, \tfrac{\eta}{|\eta|} \ra }{\left|\la (1+t) v,\tfrac{\eta^\perp}{|\eta|} \ra\right| \cdot k(z)}  \right\rfloor \right) \\
  & \leq \frac{|\xi|}{q} \cdot 4C\sqrt D \cdot \exp\left(-\frac{\lam \al }{2} \cdot q \right) \\
  & \leq |\xi|^{3/4} \cdot |v|^{1/4} \cdot 8C\sqrt D \cdot \exp\left(-\frac{\lam \al }{4} \cdot \frac{|\xi|^{1/4}}{|v|^{1/4}} \right) .
 \end{align*}
 Observe that the condition on the vector $(1+t)v$ in Theorem \ref{thm rational} is satisfied due to Step 1.
 \end{proof}

 Next, we generalize our estimate from Step 2 to the general case, not assuming $\xi\perp R$. Observe that the case $R \subset \R\xi$ is trivial, so we assume $R\not\subset\R\xi$. Using Lemma \ref{lemma diff sig radially} and Step 2, we find
 \begin{align*}
  & \sig(\xi+v) - \sig(\xi) - D^+\sig(\xi)[v] \\
  & = \sig(\Pi_1(v) + \Pi_2(v)) - \sig(\Pi_1(v)) - D^+\sig(\Pi_1(v))[\Pi_2(v)] \\
  & \leq |\Pi_1(v)|^{3/4} \cdot|\Pi_2(v)|^{1/4} \cdot C' \cdot \exp\left(-\lam' \cdot \frac{|\Pi_1(v)|^{1/4}}{|\Pi_2(v)|^{1/4}} \right) .
 \end{align*}
 For the above argument we need to meet the requirement \eqref{cond v} for Step 2, i.e.\ we need
 \begin{align}\label{cond v neu}
  \frac{1}{2q^4} \leq \frac{|\Pi_2(v)|}{|\Pi_1(v)|} \leq \frac{1}{q^4} .
 \end{align}
 Note that the set $Q$ of possible $q$ depends only the choice of $R$ and on $\om=\xi_2/\xi_1$, not on $\xi$ itself. Choosing $v_0\in R$ with $|v_0|=1$, observe that by Lemma \ref{lemma diff sig radially}
 \begin{align*}
  |\Pi_1(v)| &= |\xi|+|v| \cdot \la v_0,\tfrac{\xi}{|\xi|} \ra , \qquad |\Pi_2(v)| = |v| \cdot\left| \la v_0,\tfrac{\xi^\perp}{|\xi|} \ra \right| .
 \end{align*}
 Both terms do not vanish (using $|v|<|\xi|$ and $R\not\subset\R\xi$). Note that for $a,b>0,c \in \R$ the function $x\mapsto \frac{a x}{b+c x}$ is monotone and bijective near $x=0$. Hence, the range for $\frac{|\Pi_2(v)|}{|\Pi_1(v)|}$ in \eqref{cond v neu} translates in a 1-1 fashion into a range for $|v|$. The theorem follows, using for $|v|\leq |\xi|/2$ the estimates
 \begin{align*}
  \tfrac{1}{2}|\xi| \leq |\xi|-|v| \leq |\Pi_1(v)| \leq 2|\xi| , \qquad |\Pi_2(v)| \leq |v| .
 \end{align*}

 \begin{proof}[Proof of Step 1] 
 With $v=a\xi^\perp$ for some $a\in\R$ by the assumption $\xi\perp R$ and $t=\frac{|\xi-\eta|}{|v|}$ we find
 \begin{align*}
  \left|\la (1+t)v ,\eta^\perp \ra\right| &=  |(1+t)a| \cdot \left|\la \xi^\perp ,\eta^\perp \ra\right| \\
  &= \frac{|v| + |\xi-\eta|}{|\xi|} \cdot \left|\la \xi ,\eta \ra\right|  \\
  &= \frac{|v| + |\xi-\eta|}{|\xi|} \cdot \left|\la \xi ,\eta - \xi + \xi \ra\right|  \\
  &= (|v| + |\xi-\eta|) \cdot |\xi| .
 \end{align*}
 In the last equality we used $\eta -\xi \perp \xi$ due to $\xi\perp R$. Similarly, using $\xi=\eta+tv$ due to \eqref{eq orientation} we obtain
 \begin{align*}
  |\eta|^2 + \la (1+t)v, \eta \ra & = \la \xi+v, \eta \ra \\
  & = \la \xi+v, \xi-tv \ra \\
  & = \la \xi+v, \xi \ra -t \la \xi+v, v \ra \\
  & = | \xi |^2 - |\xi-\eta||v| .
 \end{align*}
 Recalling the assumption $k(z)=|z|^2$, we find that \eqref{cond gaussian} is equivalent to
 \begin{align}\label{cond gaussian 1}
    \frac{\frac{|v|}{|\xi|} + \frac{|\xi-\eta|}{|\xi|}}{1 -\frac{|v|}{|\xi|} \cdot \frac{|\xi-\eta|}{|\xi|}} \leq \frac{1 }{ |z|^2 } .
 \end{align}
 Observe that in general, for $0\leq x,y\leq 1/2$ one can verify that
 \begin{align*}
  \frac{x+y}{1-xy} \leq x+y + 2(x^2+y^2) .
 \end{align*}
 Hence, using \eqref{formula xi-eta} and \eqref{cond v} we find for the left hand side of \eqref{cond gaussian 1}
 \begin{align*}
  A:= \frac{\frac{|v|}{|\xi|} + \frac{|\xi-\eta|}{|\xi|}}{1 -\frac{|v|}{|\xi|} \cdot \frac{|\xi-\eta|}{|\xi|}} & \leq \frac{|v|}{|\xi|} + \frac{|\xi-\eta|}{|\xi|} + 2\left(\left(\frac{|v|}{|\xi|}\right)^2 + \left(\frac{|\xi-\eta|}{|\xi|}\right)^2\right) \\
  & \leq \frac{1}{q^4} + \frac{1}{q(q+1)} \frac{1}{1+ r\om} + 2\left( \frac{1}{q^8} + \frac{1}{q^4} \frac{1}{(1+ r\om)^2}\right) . 
 \end{align*}
 On the other hand, by $z=(q,p)$, $r=p/q$ and \eqref{estimate diophantine}, we have for the right hand side of \eqref{cond gaussian 1}
 \begin{align*}
  \nonumber \frac{1}{|z|^2} & = \frac{1}{q^2(1+r^2)} \\ 
  & = \frac{1}{q^2} \cdot \frac{1}{1+r\om+r(r-\om)} \\
  \nonumber & \geq \frac{1}{q^2} \cdot \frac{1}{1+r\om+|r| \frac{1}{q(q+1)} } .
 \end{align*}
 Put together, we obtain
 \begin{align*}
  & q^2(q+1) \cdot (\frac{1}{|z|^2} - A) \\
  & \geq \frac{q+1}{1+r\om+|r| \frac{1}{q(q+1)} } - \left[ \frac{q+1}{q^2} + \frac{q}{1+ r\om} + 2\frac{q+1}{q^2}\left( \frac{1}{q^4} + \frac{1}{(1+ r\om)^2}\right) \right] \\
  & = \frac{q+1}{1+r\om+|r| \frac{1}{q(q+1)}} - \frac{q}{1+ r\om} - \frac{q+1}{q^2} \left[ 1 + 2\left( \frac{1}{q^4} + \frac{1}{(1+ r\om)^2}\right) \right] \\
  & \geq \frac{(q+1)(1+ r\om) - q(1+r\om+|r| \frac{1}{q(q+1)})}{(1+r\om+|r| \frac{1}{q(q+1)})(1+ r\om)} - 5\frac{q+1}{q^2} \\
  & = \frac{1+ r\om -  \frac{|r|}{q+1}}{(1+r\om+|r| \frac{1}{q(q+1)})(1+ r\om)} - 5\frac{q+1}{q^2} .
 \end{align*}
 For $q\to\infty$, the last term converges to $(1+\om^2)^{-1}>0$, proving that \eqref{cond gaussian 1} is fulfilled for sufficiently large $q$.
 \end{proof}
 
 This finishes the proof of Theorem \ref{slope-lemma}. 
\end{proof}

\section{The proof of Proposition \ref{prop lecalvez intro}}\label{section lecalvez}

In this section we sketch the proof of Proposition \ref{prop lecalvez intro}. It stated that for conformally generic Finsler metrics (see Definition \ref{def generic}) there exists an open and dense subset $U\subset S^1$, containing the points with rational and infinite slope, such that the set $\M(\xi)$ is uniformly hyperbolic for all $\xi\in U$. The central arguments were given by P.\ LeCalvez in \cite{lecalvez}.

First, recall that by Proposition \ref{generic-paper result}, the shortest closed geodesic in each homotopy class is unique, if $F$ is conformally generic. The main property of our Finsler metric that we need for the proof will be that it is a ``Kupka-Smale metric''. This is formulated in the following theorem, which is a variant of Theorem D in \cite{contreras-conj} due to G.\ Contreras and R.\ Iturriaga.

\begin{thm}\label{thm kupka-smale}
 The following property is conformally generic for Finsler metrics on $\T$:
 \begin{itemize}
  \item in all non-trivial free homotopy classes of $\T$, the unique shortest closed geodesic is hyperbolic and its stable and unstable manifolds intersect transversely.
 \end{itemize}
\end{thm}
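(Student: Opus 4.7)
The plan is the standard Kupka-Smale strategy: enumerate the countably many primitive free homotopy classes $z \in \Z^2 - \{0\}$ and for each show that the set of ``good'' conformal factors is open and dense in $C^\infty$. For fixed primitive $z$, let $\mathcal{G}_z$ consist of those positive $C^\infty$ functions $f$ on $\T$ such that $F = f \cdot F_0$ admits a unique shortest closed geodesic in the free homotopy class of $z$, this geodesic is hyperbolic for $\phi_F^t$, and its stable and unstable manifolds in $S\T$ intersect transversely. By Baire category the theorem then holds for $f$ in the residual set $\bigcap_z \mathcal{G}_z$.

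Openness of $\mathcal{G}_z$ rests on three standard stability facts: strict minimality in a free homotopy class is preserved under $C^\infty$-small perturbations of $f$ (so uniqueness and minimality of the unique closed geodesic persist by isolation and compactness), hyperbolic periodic orbits of a flow persist under $C^2$-perturbations of the generating vector field, and transversality of finite-dimensional invariant manifolds along a compact invariant set is $C^1$-open.

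For density, fix $f_0$ and build $f$ arbitrarily $C^\infty$-close to $f_0$ in $\mathcal{G}_z$ in three steps. First, by Proposition \ref{generic-paper result}, a small perturbation renders the shortest closed geodesic $\gamma$ in the class of $z$ unique. Second, make $\gamma$ hyperbolic via the Contreras-Iturriaga machinery \cite{contreras-conj}: by Maupertuis' principle, as invoked in the remarks preceding Proposition \ref{generic-paper result}, a conformal perturbation $f = f_0(1 + \varepsilon u)$ with $u$ supported in a tubular neighborhood of $\gamma$ corresponds to a potential perturbation of the associated Tonelli Lagrangian. Such potentials give enough freedom to modify the quadratic part of the action along $\gamma$, hence the linearized Poincar\'e return map, so that its Floquet multipliers are pushed off the unit circle while $\gamma$ remains the unique minimizer. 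Third, with another conformal perturbation supported away from $\gamma$ but meeting a precompact arc where non-transversality between $W^s(\gamma)$ and $W^u(\gamma)$ is witnessed, achieve transverse intersection by the classical jet-perturbation (transversalization) argument, again routed through Maupertuis to the conformal setting. Choosing each perturbation small enough preserves the open properties achieved in the previous steps.

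The main obstacle is Step two of density: showing that \emph{conformal} perturbations alone, rather than arbitrary Finsler perturbations, suffice to realize hyperbolicity without destroying the minimality of $\gamma$. This is the content of the Finsler analogue of Contreras-Iturriaga's Theorem D, transported from the Lagrangian potential setting via the Maupertuis correspondence. Once that perturbation lemma is in place, Step three is a routine transversalization relative to the (now hyperbolic, isolated) orbit $\gamma$, and the countable-intersection argument over primitives $z \in \Z^2 - \{0\}$ completes the proof.
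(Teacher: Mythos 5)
The paper does not actually prove Theorem \ref{thm kupka-smale}: it only refers the reader to the proof of Theorem D in \cite{contreras-conj}, with the additive perturbation $L+\phi$ replaced by the multiplicative (conformal) one $\phi\cdot L$, where $L=\tfrac{1}{2}F^2$. Your outline is correct and takes essentially the same route, since the step you isolate as the crux --- realizing hyperbolicity and transverse (un)stable manifolds by conformal perturbations via the Maupertuis/Ma\~n\'e correspondence, i.e.\ the Finsler analogue of Contreras--Iturriaga's Theorem D --- is precisely what the paper defers to, and the remaining Kupka--Smale bookkeeping (openness, Baire intersection over classes) is standard.
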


We will not prove Theorem \ref{thm kupka-smale} here. The interested reader is referred to the proof of Theorem D in \cite{contreras-conj}, noting that the perturbation $L+\phi$ is replaced by $\phi\cdot L$, where the Lagrangian $L$ is given by $L=\frac{1}{2}F^2$.

Next, we can use the arguments in \cite{lecalvez} to show the following.

\begin{thm}\label{thm lecalvez rational}
 Assuming the assertion in Theorem \ref{thm kupka-smale}, the set $\M(\xi)$ is hyperbolic for all $\xi\in S^1$ with rational or infinite slope.
\end{thm}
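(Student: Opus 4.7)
Proof plan for Theorem \ref{thm lecalvez rational}.

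The plan is to reduce the statement to the well-known fact that a hyperbolic periodic orbit together with finitely many transverse homoclinic orbits forms a uniformly hyperbolic invariant set. First I would fix $\xi\in S^1$ of rational or infinite slope and let $z\in\Z^2$ be the prime element with $\xi\in\R_{>0}z$. By the conformal genericity in Proposition \ref{generic-paper result} (which we are free to invoke since Theorem \ref{thm kupka-smale} is strictly finer than this assumption), in the free homotopy class $z$ there is exactly one shortest closed $F$-geodesic $\gamma\subset\T$; by Morse--Hedlund (Theorem \ref{morse periodic}), $\M^{per}(z)$ projects in $S\T$ to the single closed orbit of $\gamma$, while its lifts to $\R^2$ form the $\Z^2$-orbit of a fixed lift $\tilde\gamma$. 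By the Kupka--Smale conclusion of Theorem \ref{thm kupka-smale}, $\gamma$ is a hyperbolic closed orbit for $\phi_F^t$, and its stable and unstable manifolds $W^s(\gamma),W^u(\gamma)\subset S\T$ meet transversely.

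Next I would identify $\M(\xi)\setminus\M^{per}(z)$ in $S\T$ as a union of transverse homoclinic orbits of $\gamma$. By Theorem \ref{morse periodic}\eqref{morse periodic item 3}, every such minimal geodesic lifts to a heteroclinic $c^\pm:\R\to\R^2$ between two neighboring translates $\tilde\gamma$ and $\tilde\gamma+z^\perp$ of our periodic lift. Since the two lifted endpoints project to the same orbit $\gamma$ in $S\T$, the curves $c^\pm$ descend to homoclinic orbits of $\gamma$; by the exponential convergence to $\gamma$ at $\pm\infty$ (which is automatic from hyperbolicity and minimality, via the standard stable-manifold criterion), these homoclinics lie on $W^s(\gamma)\cap W^u(\gamma)$, where they meet transversely by the Kupka--Smale hypothesis. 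Following the argument of LeCalvez in \cite{lecalvez}, one then uses the strict linear order inherited from the plane on the set of neighboring pairs of periodic minimals in $\R^2$, together with transversality, to show that, modulo the deck-transformation action of $\Z^2$, there are only finitely many heteroclinic minimal geodesics between neighboring pairs. Equivalently, the set of homoclinic orbits of $\gamma$ contained in $\M(\xi)$ projects to a finite collection in $S\T$.

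Finally, I would invoke the classical construction (Birkhoff--Smale, cf.\ Chapter~6 of \cite{KH}) that if a hyperbolic closed orbit $\gamma$ admits finitely many transverse homoclinic orbits $H_1,\dots,H_N$, then the compact invariant set
\begin{equation*}
  \Lambda \;:=\; \gamma \cup H_1 \cup \dots \cup H_N
\end{equation*}
is uniformly hyperbolic: the stable bundle is obtained by concatenating $E^s|_\gamma$ with the tangents to $W^s(\gamma)$ along each $H_j$ (and symmetrically for $E^u$), and transversality guarantees that these bundles are continuous and transverse along $\Lambda$, while the hyperbolicity constants of $\gamma$ and the exponential convergence of each $H_j$ to $\gamma$ at $\pm\infty$ provide uniform exponential contraction and expansion. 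Since we have shown $\M(\xi)\subset\Lambda$, and $\M(\xi)$ is compact and $\phi_F^t$-invariant, $\M(\xi)$ inherits the hyperbolic splitting from $\Lambda$.

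The main obstacle in this outline is the finiteness of the family $\{H_j\}$. The hyperbolicity and transversality from Theorem \ref{thm kupka-smale} are local inputs, but the global finiteness requires the planar order structure of minimal geodesics: neighboring pairs of lifted periodic minimals are in bijection with $\Z^2/\Z z$ and the heteroclinic minimals are organized accordingly, so the content of LeCalvez's argument is to show that each such strip contributes only two heteroclinic orbits (the pair $c^\pm$), rather than an accumulation of infinitely many intersections. Once this finiteness is in place, the hyperbolicity assertion is a standard consequence of the Birkhoff--Smale construction.
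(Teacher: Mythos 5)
Your proposal is correct and follows essentially the same route as the paper's (sketched) proof: uniqueness of the shortest closed geodesic plus the Kupka--Smale property give a single hyperbolic periodic orbit with transversely intersecting (un)stable manifolds, Morse--Hedlund places $\M(\xi)\setminus\M^{per}(\xi)$ inside those manifolds, transversality (via LeCalvez) yields finiteness of these orbits, and the standard $\lambda$-lemma/Birkhoff--Smale mechanism then gives uniform hyperbolicity of the union. The only slight overstatement is the claim that each gap contributes exactly the two heteroclinics $c^\pm$ --- what LeCalvez's argument (and the paper) actually needs and asserts is just that there are finitely many such orbits, which your hedged formulation already covers.
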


\begin{proof}[Sketch of the proof]
 We only sketch the proof, referring to \cite{lecalvez} for the details. Roughly, the argument is a follows. By assumption, the set of periodic minimal geodesics $\M^{per}(\xi)$ is a single hyperbolic orbit with (un)stable manifolds intersecting transversely. Then one notes that $A := \M(\xi)-\M^{per}(\xi)$ is a subset of the (un)stable manifolds of the hyperbolic periodic orbit (see Theorem \ref{morse periodic}). An application of the $\lam$-Lemma shows that the closure of each orbit in $A$ is uniformly hyperbolic. Then one uses the transversality of the intersection of (un)stable manifolds to infer that $A$ consists of only finitely many orbits. Thus, $\M(\xi)$ is hyperbolic.
\end{proof}

Using the well-known stability of hyperbolic sets, we can now prove Proposition \ref{prop lecalvez intro}.

\begin{proof}[Proof of Proposition \ref{prop lecalvez intro}]
 We show that for each $\xi\in S^1$ with rational or infinite slope, there exists a small neighborhood $U_\xi\subset S^1$ of $\xi$, such that $\M(\eta)$ is hyperbolic for all $\eta\in U_\xi$. Assuming the contrary, fix $\xi$ with rational slope and let $\xi_n\to\xi$, such that $\M(\xi_n)$ is not hyperbolic. By the upper semi-continuity of $\xi\mapsto\M(\xi)$ (Lemma \ref{M upper semi-cont}), we find by Proposition 6.4.6 on p.\ 265 in \cite{KH}, that $\M(\xi_n)$ has to be hyperbolic for large $n$, which is a contradiction.
\end{proof}

\section{Examples}\label{section examples}


\subsection{The flat torus}

The simplest example is the flat torus. More generally, consider any Finsler metric $F:T\T\to\R$, which does not depend on the base variable $x\in \T$ (in standard coordinates $T\T=\T\times \R^2$). Here we find that
\[ \sig_F = F , \]
which is everywhere strongly convex by the definition of a Finsler metric. By Main Theorem \ref{main thm intro irrat} \eqref{main irrat i}, this corresponds to KAM-tori in $S\T$. Indeed, these are simply given by
\[ \M(\xi) = \{ (x,v) : x\in\T, v = \xi/F(\xi) \}\subset S\T. \]
A special case is the euclidean norm $F(v)=|v|$, where
\[ \{\sig_F=1\}= S^1 . \]

\subsection{The rotational torus}

The next well-known example is a rotational torus in $\R^3$, obtained by rotating a circle in the $x_1$-$x_3$-plane about the $x_3$-axis. Here, one has the ``inner'' closed geodesic as a hyperbolic closed geodesic, such that $\M^{per}(\pm e_1)$ become hyperbolic sets and Theorem \ref{intro rational hyp} applies. In the following, we will find a formula for drawing the unit circle $\{\sig_F=1\}\subset\R^2$ with a computer.

Let $c=(c_1,c_2,c_3):\R\to\R^3$ be a curve with $c_1>0,c_2=0$ and $|\dot c|=1$. We can parametrize a surface of revolution in $\R^3$ via
\[ \vf:\R^2\to\R^3, \qquad \vf(s,t) = \left(\begin{smallmatrix} \cos(s) & -\sin(s) & 0 \\ \sin(s) & \cos(s) & 0 \\ 0 & 0 & 1 \end{smallmatrix}\right) \cdot \left(\begin{smallmatrix} c_1(t) \\ 0 \\ c_3(t) \end{smallmatrix}\right) . \]
If $h:\R\to\R$ is a solution of $h'=c_1\circ h$, then $h$ is strictly increasing by $c_1>0$. We find with
\[ \tilde\vf(x_1,x_2) := \vf(x_1,h(x_2)) , \]
that for the pullback of the euclidean metric $\skp_{\R^3}$
\[ (\tilde \vf^*\skp_{\R^3})_x = f(x_2) \cdot \skp , \qquad f := (c_1\circ h)^2. \]
Moreover, if $c$ is a periodic curve, then the function $f$ will be periodic.

A Riemannian metric $g$ on $\R^2$ of the form
\[ g_x(v,w) = f(x_2) \cdot \la v,w \ra , \qquad x=(x_1,x_2) \]
is called a {\em rotational metric}. One can draw the level set $\{\sig_g=1\}$ using a computer and the formula in the following theorem, see Figure \ref{fig_al-levelset}.

\begin{thm}\label{thm example rotational}
 If $g_x=f(x_2)\cdot \skp$ is a rotational metric on $\T=\R^2/\Z^2$, then the unit circle $\{\sig_g=1\}$ of the stable norm is given by the union of the two curves
 \[ t \qquad \mapsto \qquad \pm \frac{1}{\int_0^1 \frac{f(x_2)}{\sqrt{f(x_2) - t^2}} ~dx_2  } \cdot  \left( \int_0^1 \frac{t}{\sqrt{f(x_2) - t^2}} ~dx_2 , 1  \right) \]
 with $|t|\leq\sqrt{\min f}$.
\end{thm}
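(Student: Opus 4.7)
The strategy is to exploit the $x_1$-translation invariance of $g_x=f(x_2)\cdot\skp$ to integrate the geodesic equations explicitly, and then to identify each $g$-unit-speed minimal geodesic with a point on $\{\sig_g=1\}$ through its rotation vector. Since the Lagrangian $L(x,v)=\tfrac{1}{2}f(x_2)|v|^2$ is independent of $x_1$, Noether's theorem yields the Clairaut first integral $t:=f(x_2)\dot x_1$. Combining this with the unit-speed condition $f(x_2)(\dot x_1^2+\dot x_2^2)=1$ gives
\[ \dot x_1=\frac{t}{f(x_2)},\qquad \dot x_2=\pm\frac{\sqrt{f(x_2)-t^2}}{f(x_2)}. \]
For each $t$ with $t^2<\min f$ the radicand is strictly positive on $\T$, so $\dot x_2$ has constant sign. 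Fixing $\dot x_2>0$, the set of $g$-unit vectors with Clairaut value $t$ is a smooth invariant graph over $\T$ on which the flow is smoothly conjugate to a linear flow, i.e.\ a $C^\infty$-KAM-torus. By the discussion at the beginning of Section \ref{section kam-case}, every geodesic on such a KAM-torus is a minimal geodesic of $g$.

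Separating variables along such a geodesic, the time to traverse one $x_2$-period and the corresponding $x_1$-displacement are
\[ T(t)=\int_0^1\frac{dx_2}{\dot x_2}=\int_0^1\frac{f(x_2)}{\sqrt{f(x_2)-t^2}}\,dx_2,\qquad \Delta x_1(t)=\int_0^1\frac{\dot x_1}{\dot x_2}\,dx_2=\int_0^1\frac{t}{\sqrt{f(x_2)-t^2}}\,dx_2, \]
so the lift $\widetilde c$ to $\R^2$ satisfies $\widetilde c(s+NT(t))=\widetilde c(s)+N(\Delta x_1(t),1)$ for every integer $N$, and its rotation vector is $\rho(t)=(\Delta x_1(t),1)/T(t)$. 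Because the geodesic is $g$-unit-speed and minimal, the $g$-length over $[0,NT(t)]$ equals $NT(t)$ and also equals the $g$-distance between the endpoints in $\R^2$; the standard calibration identity $\lim_{N\to\infty} N^{-1}d_g(p,p+Nv)=\sig_g(v)$ then gives $T(t)=\sig_g(\Delta x_1(t),1)$, whence $\sig_g(\rho(t))=1$ by homogeneity. This is exactly the $+$ branch of the claimed formula; reversing orientation (equivalently, taking $\dot x_2<0$ together with Clairaut value $-t$) replaces $\rho(t)$ by $-\rho(t)$ and yields the $-$ branch.

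It remains to show that as $t$ varies over $[-\sqrt{\min f},\sqrt{\min f}]$ the two branches exhaust $\{\sig_g=1\}$. Near any minimizer $x_2^*$ of $f$, the Taylor expansion $f(x_2)-t^2\approx(\min f-t^2)+\tfrac12 f''(x_2^*)(x_2-x_2^*)^2$ shows that both $T(t)$ and $\Delta x_1(t)$ diverge logarithmically as $|t|\nearrow\sqrt{\min f}$, while their ratio tends to $t/f(x_2^*)=\pm 1/\sqrt{\min f}$ and $1/T(t)\to 0$; hence $\rho$ extends continuously to the closed interval with $\rho(\pm\sqrt{\min f})=(\pm 1/\sqrt{\min f},0)$. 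This agrees with the direct computation that the shortest closed $g$-geodesic in the homotopy class $(1,0)$ is the horizontal circle $\{x_2=x_2^*\}$ of length $\sqrt{\min f}$, so $\sig_g(e_1)=\sqrt{\min f}$. Since $\rho(0)=(0,1/\int_0^1\sqrt{f}\,dx_2)$ and the second component of $\rho(t)$ is $1/T(t)>0$ on the open interval, the $+$ branch is a continuous arc in the closed upper half-plane from $(-1/\sqrt{\min f},0)$ through the positive $y$-axis to $(1/\sqrt{\min f},0)$; the $-$ branch is its central reflection.

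The main obstacle is the surjectivity in this last step: continuity alone only guarantees that the image is a compact connected subset of $\{\sig_g=1\}$ in the closed upper half-plane containing both endpoints and a point strictly above the $x_1$-axis. Since the stable norm $\sig_g$ is a (reversible) convex norm, $\{\sig_g=1\}$ is a simple closed convex curve, and the only connected subset of its intersection with the closed upper half-plane that contains the two points $(\pm 1/\sqrt{\min f},0)$ together with a point having strictly positive $y$-coordinate is the full upper half-arc; the lower half is swept out analogously by the $-$ branch. Strict monotonicity of $\arg\rho(t)$ in $t$ (provable by a direct computation exploiting that $\Delta x_1(t)$ is odd and strictly increasing in $t$ while $T(t)$ is even) supplies an even cleaner argument via the intermediate value theorem, and is the step I would make fully rigorous before writing up the final proof.
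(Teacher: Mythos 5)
Your proof is correct in substance but takes a genuinely different route from the paper. You work on the primal (Lagrangian) side: Clairaut's integral gives the explicit invariant graphs, you compute the rotation vector $\rho(t)=(\Delta x_1(t),1)/T(t)$ of the minimal geodesics lying on them, identify $T(t)=\sig_g(\Delta x_1(t),1)$ through the asymptotic-distance characterization of the stable norm, and finish with a topological exhaustion argument. The paper instead works on the dual side: it passes to the Hamiltonian $H(x,p)=|p|^2/(2f(x_2))$, views the same invariant tori as Lagrangian graphs $\Sigma_{a,b}^\pm$, computes their Liouville classes, uses that Mather's $\al$-function equals the energy on such graphs, and recovers the unit circle of $\beta=\frac{1}{2}\sig_g^2$ as the image of $\{\al=1/2\}$ under $\nabla\al$ via Fenchel duality, the closing-up of the two branches playing the role of your surjectivity step. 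Your approach buys elementarity (no $\al$-function, no Liouville classes) at the price of two facts you should make explicit: (i) the identity $\lim_{N}N^{-1}d_g(p,p+Nv)=\sig_g(v)$ for arbitrary $v\in\R^2$, which is a Burago-type bounded-distance statement and not merely the integer-class definition of $\sig_g$ (alternatively, invoke that the invariant measure carried by the Lagrangian graph is minimizing, so $\beta(\rho(t))=\frac{1}{2}$ directly); and (ii) the endpoint limit $\rho(t)\to(\pm 1/\sqrt{\min f},0)$, where your Taylor expansion tacitly assumes a non-degenerate minimum of $f$ --- this is easily repaired, since $\int_0^1\frac{f-\min f}{\sqrt{f-t^2}}\,dx_2$ stays bounded as $t^2\nearrow\min f$, giving $T(t)=\frac{\min f}{t}\Delta x_1(t)+O(1)$ and $T(t)\to\infty$ for any smooth $f$. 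Finally, the monotonicity of $\arg\rho(t)$ that you postpone is not needed: since $\sig_g$ is reversible, $\{\sig_g=1\}\cap\{x_2\ge 0\}$ is an arc with endpoints $(\pm 1/\sig_g(e_1),0)=(\pm 1/\sqrt{\min f},0)$, and a connected subset of an arc containing both endpoints is the whole arc, so your connectedness argument already closes the proof.
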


\begin{figure}\centering 
 \includegraphics[scale=0.7]{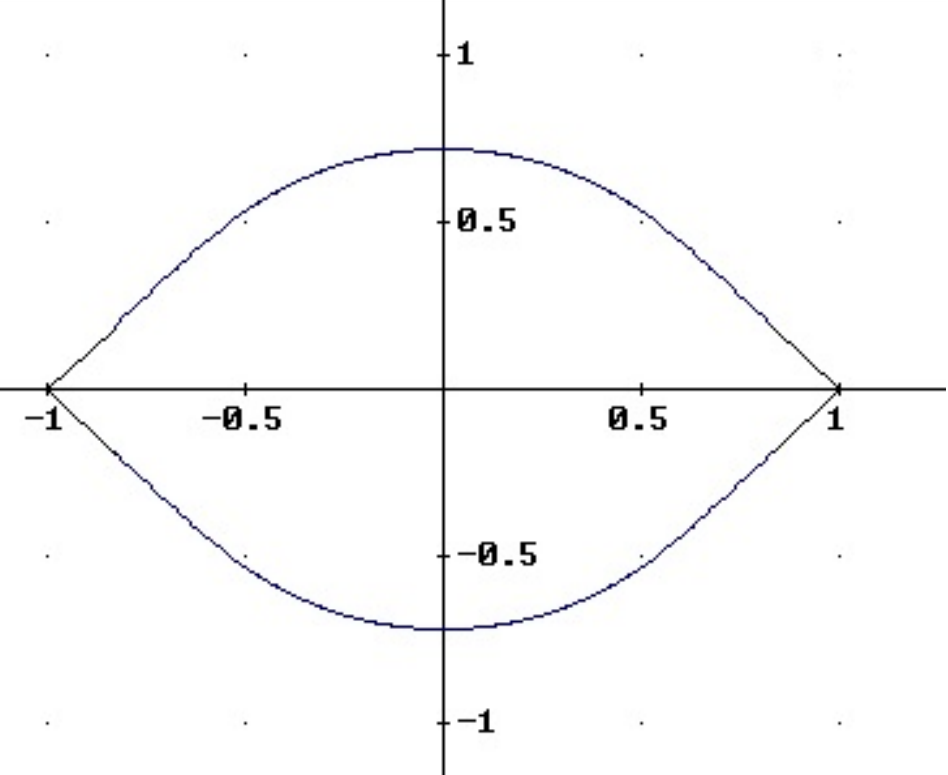}
 \caption{The level set $\{\sig_g=1\}$ for the rotational metric $g=f\cdot \skp$ with $f(x)=\cos(2\pi x)+2$. Note that $\{\sig_g=1\}$ has tangencies to infinite order to its one-sided tangent spaces at the points $\pm e_1$ (the parts of $\{\sig_g=1\}$ look like a straight line). This corresponds via Theorem \ref{intro rational hyp} to the hyperbolicity of $\M^{per}(\pm e_1)$. On the other hand, $\sig_g$ is strongly convex in $\R^2-\R e_1$, corresponding to $C^\infty$-KAM-tori via Main Theorem \ref{main thm intro irrat} \eqref{main irrat i}. See also the proof of Theorem \ref{thm example rotational}.\label{fig_al-levelset}}
\end{figure}

\begin{proof}
First, we move to the Hamiltonian setting via the Legendre transform
\[ \LL:T\T\to T^*\T, \qquad \LL(x,v) = g_x(v,.) = f(x_2) \cdot \la v,. \ra, \]
where the geodesic flow of $g$ is described in standard coordinates of $T^*\T=\T\times \R^2$ by the Hamiltonian
\[ H(x,p) = \frac{|p|^2}{2\cdot f(x_2)} , \]
which is dual to the Lagrangian $L(x,v)=\frac{1}{2}g_x(v,v)$. $H$ admits the coordinate function $p_1$ as a first integral, such that for $(a,b)\in\R^2$ the sets
\[ \Sigma_{a,b} := \{ H=a,~ p_1=b \} \subset T^*\T\]
are invariant under the Hamiltonian flow of $H$. We find
\[ H(x,p)=a ~\&~ p_1=b \quad \iff \quad p=p(x) = \left( b , \pm\sqrt{2a f(x_2) - b^2} \right) . \]
For the case $2a \min f > b^2$, the above formula defines two smooth, invariant, Lagrangian graphs
\[ \Sigma_{a,b}^\pm :=\{(x,p(x)):x\in \T\} \]
(which are in fact $C^\infty$-KAM-tori in the sense of Definition \ref{def KAM intro}). We can write $p(x)$ seen as a closed 1-form on $\T$ in the form $p=\eta+du$, where $\eta\in (\R^2)^* \cong H^1(\T,\R)$ is a constant 1-form and $u:\T\to\R$ is some function (see e.g.\ Lemma 3.4 in \cite{diss}); $\eta=[p]$ is called the Liouville class of the Lagrangian graph. We thus find a formula for the Liouville class of the graph $\Sigma_{a,b}^\pm$ given by
\[ [\Sigma_{a,b}^\pm] =\eta = \int_{\T} p(x) dx = \left( b , \int_0^1 \pm \sqrt{2a f(x_2) - b^2} ~dx_2 \right). \]

We shall use a bit of language from Mather theory; see e.g.\ \cite{sorrentino} for an introduction. Namely, the convex dual of Mather's $\beta$-function $\beta=\frac{1}{2}\sig_g^2$ is Mather's $\al$-function. Note that $\al$ is a $C^1$-function by $\beta$ being strictly convex in the case of $\T$, as we already noted in the introduction. Moreover, if $\eta$ is the Liouville class of an invariant Lagrangian graph, then $\al(\eta)$ equals the energy of the corresponding graph. Hence,
\[ \al([\Sigma_{a,b}^\pm]) = H|_{\Sigma_{a,b}^\pm} = a . \]
This shows
\[ \left\{\left( b , \int_0^1 \pm \sqrt{2a f(x_2) - b^2} ~dx_2 \right) :  b^2 < 2a \min f \right\} \subset \{\al = a \} . \]
Next, observe that by Fenchel duality and $\al(t\eta)=t^2\al(\eta)$ we have for the euclidean gradient $\nabla\al$ of $\al$, that
\[ \beta(\nabla\al(\eta)) = \la \nabla\al(\eta), \eta \ra - \al(\eta) = \al(\eta) \quad \forall \eta . \]
Hence, points in $\{\al=a\}$ together with the gradient $\nabla\al$ correspond to points in 
\[ \{\beta=a\} = \{ \sig_g = \sqrt{2a} \}. \]
Hence, our aim is to compute $\nabla\al(\eta)$ for $\eta\in \{\al=a\}$.

We fix the value $a=1/2$ and consider the function
\[ g(t) := \int_0^1 \sqrt{f(x_2) - t^2} ~dx_2 , \qquad |t| < \sqrt{\min f} . \]
Then the curve
\[ \g(t) := \left( t , g(t) \right) , \qquad |t| < \sqrt{\min f} \]
parametrizes the upper half of the set found in $\{\al=1/2\}$ above. The velocity vector $\dot \g = \left( 1 , g' \right)$ is orthogonal to $\nabla\al\circ \g$, i.e.\ we find some function $\lam(t)$ with
\begin{align*}
 \nabla\al(\g(t)) = \lam(t) \cdot \left( -g'(t) , 1  \right) .
\end{align*}
Using $\la \nabla\al(\eta),\eta\ra = 2\al(\eta)$ due to homogeneity of degree 2, we find
\begin{align*}
 1 & = 2\al(\g(t)) = \la \nabla\al(\g(t)),\g(t)\ra = \lam(t) \cdot (g(t)-t\cdot g'(t)) .
\end{align*}
Hence, for $|t|<\sqrt{\min f}$
\begin{align*}
 \nabla\al(\g(t)) = \frac{1}{g(t)-t\cdot g'(t)} \cdot  \left( -g'(t) , 1  \right) \quad \in \quad \{\beta=1/2\}.
\end{align*}
Next, observe that the second component of $\nabla\al(\g(t))$ vanishes as $|t|\to \sqrt{\min f}$. Indeed, one computes
\[ g(t)-t\cdot g'(t) = \int_0^1 \frac{f(x_2)}{\sqrt{f(x_2) - t^2}} ~dx_2 \geq \min f \cdot \int_0^1 \frac{1}{\sqrt{f(x_2) - t^2}} ~dx_2 . \]
On the other hand,
\begin{align*}
 \int_0^1 \frac{1}{\sqrt{f(x_2) - t^2}} ~dx_2 \to \infty , \quad \text{as } |t|\nearrow \sqrt{\min f} .
\end{align*}
This shows that the two segments $\pm \nabla\al\circ\g(t)$ with $|t|\leq \sqrt{\min f}$ form a closed curve in $\{\beta=1/2\}$. The theorem follows.
\end{proof}

\subsection{The punctured torus}

Here we state a result due to G. McShane and I. Rivin, see \cite{rivin1} and \cite{rivin}. These authors treat hyperbolic metrics $g$ on the punctured torus $\dot T^2$. While this case does not quite fit into our setting of a metric on the closed torus $\T$, the same phenomena appear. Let us state Theorem 2.1 from \cite{rivin}, see also Figure \ref{fig_rivin} taken from \cite{rivin0}.

\begin{thm}\label{thm rivin}
 Let $g$ be a hyperbolic metric on the once punctured torus $\dot T^2$ with finite area. Then the stable norm $\sig_g$ of $g$ on $H_1(\dot T^2,\R)\cong \R^2$ is flat to infinite order at points $\xi\in S^1$ of irrational slope. At points $\xi\in S^1$ of rational or infinite slope, the stable norm is not differentiable and the analogous statement holds on each side of the line $\R_+\xi$.
\end{thm}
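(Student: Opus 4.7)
The plan is to exploit the hyperbolic structure: the holonomy $\rho:\pi_1(\dot T^2)\to\mathrm{PSL}_2(\R)$ sends each free homotopy class $[w]$ to a conjugacy class of matrices, and the unique closed geodesic representing $[w]$ has length
\[ \ell_g([w]) \;=\; 2\,\mathrm{arccosh}\!\left(\tfrac{1}{2}|\mathrm{tr}(\rho(w))|\right). \]
Since $\pi_1(\dot T^2)$ is free on two generators $a,b$ and abelianization sends $a\mapsto e_1$, $b\mapsto e_2$, each primitive class $(p,q)\in\Z^2$ contains up to isotopy a unique simple closed curve, whose free homotopy class I denote $w_{p,q}$. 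The first step is to show that this simple representative realizes the infimum in the definition of $\sig_g(p,q)$: on the punctured torus, non-simple representatives of a primitive homology class have strictly larger hyperbolic length than the simple one, a fact proved via intersection-number bookkeeping in the universal cover.

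Next I would use the Farey/Stern--Brocot parametrization of simple closed curves: every primitive $(p,q)$ arises from a pair of Farey neighbors by the mediant operation, and the word $w_{p,q}$ is built inductively so that under $\rho$ its trace satisfies the Fricke identity
\[ \mathrm{tr}(AB) + \mathrm{tr}(AB^{-1}) \;=\; \mathrm{tr}(A)\,\mathrm{tr}(B). \]
This identity expresses $\mathrm{tr}(\rho(w_{p,q}))$ as a universal polynomial in the three Markov coordinates $x=\mathrm{tr}\,\rho(a)$, $y=\mathrm{tr}\,\rho(b)$, $z=\mathrm{tr}\,\rho(ab)$, and along continued-fraction sequences this gives sharp recursions: for the convergents $p_n/q_n$ of a fixed slope $\omega=\xi_2/\xi_1$, the trace grows exponentially in $q_n$, so that $\sig_g(p_n,q_n)\asymp q_n$ with an exponentially well-controlled remainder.

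With this machinery the two statements separate. At a rational direction $\xi=(p,q)$ the half-line $\R_{>0}\xi$ is spanned by iterates of the single simple word $w_{p,q}$, while arbitrarily small transverse perturbations force the insertion of a neighboring simple geodesic $w_{p',q'}$; the two Farey neighbors of $\xi$ lie on opposite sides of $\R_{>0}\xi$ and give distinct one-sided derivatives of $\sig_g$, producing a genuine corner. At an irrational $\xi$ the best rational approximations are the continued-fraction convergents $p_n/q_n$, which satisfy $|\omega-p_n/q_n|\lesssim 1/q_n^2$ while $q_n\to\infty$ at least geometrically. Plugging these into the homogeneous definition of $\sig_g$ and combining with the lower bound $\sig_g(\xi+v)-\sig_g(\xi)-D^+\sig_g(\xi)[v]\geq 0$ coming from convexity, one controls the one-sided deviation from linearity by a quantity which is $O(q_n^{-N})$ for every $N$, hence smaller than any polynomial in $|v|$.

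The main obstacle is the very first step: verifying that the infimum in the definition of $\sig_g$ is genuinely attained by the simple representative and not by some non-simple conjugacy class in the same homology class. This is the geometric heart of \cite{rivin1,rivin} and reduces to a sharp trace inequality in $\mathrm{PSL}_2(\R)$ for products built along the Farey tree (closely related to McShane's identity). Once this minimality is established, the combinatorics of continued fractions, together with the Fricke polynomial identities, deliver simultaneously the corner at rationals and the infinite-order flatness at irrationals.
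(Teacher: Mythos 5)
You should first note that the paper does not prove this statement at all: it is quoted verbatim as Theorem 2.1 of McShane--Rivin (\cite{rivin1}, \cite{rivin}) and used only as an illustration of Main Theorem 1.8 and Theorem 1.12. So the only meaningful comparison is with the original McShane--Rivin argument, and your sketch does follow its general lines (simple representatives of primitive classes, the Fricke trace identity, the Farey/continued-fraction combinatorics). As a proof, however, it has genuine gaps. The most serious one you name yourself: the claim that the unique simple closed geodesic in a primitive class realizes the stable norm (against all non-simple and disconnected representatives, on a cusped surface where the infimum could a priori degenerate into the cusp) is exactly the content of \cite{rivin1}, and you defer it to that reference. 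Since this is the step on which everything else rests, the proposal is not a proof but an outline that cites the theorem's own source for its core.

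The analytic endgame is also asserted rather than derived, and as stated it overclaims. You never construct the tangent functional $D\sig_g(\xi)$ at an irrational direction (it has to be produced as a limit of one-sided derivatives along Farey approximations, with the trace remainders controlling the error), and the conclusion ``smaller than any polynomial in $|v|$'' for all small $v$ on a ray does not follow from estimates at the convergents $p_n/q_n$ alone: the unit sphere has a corner at every rational direction, these corners accumulate at $\xi$, and the corner at $p_n/q_n$ has size roughly $e^{-c\,q_n}$ while its distance to $\xi$ is of order $1/(q_nq_{n+1})$, which for Liouville slopes is incomparably smaller than $e^{-c\,q_n}$. Hence a pointwise-in-$v$ infinite-order bound cannot be extracted by interpolating (via convexity) between convergents; one either restricts to sequences $v_n\to0$ --- which is precisely why Main Theorem 1.8(ii) of this paper is formulated that way --- or one must argue much more carefully about how the corner sizes compare with the Diophantine properties of $\xi$. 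Finally, the non-differentiability at rational directions needs an actual reason why the two one-sided derivatives differ (uniqueness and hyperbolicity of the simple closed geodesic, i.e.\ a strict inequality coming from the trace recursion, in the spirit of Theorem 1.1(ii) and the gap structure of Theorem 3.1); saying that the two Farey neighbours lie on opposite sides of $\R_{+}\xi$ does not by itself produce a corner.
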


Of course, in this case there are no KAM-tori and the geodesic flow is hyperbolic due to negative curvature. In this light, Theorem \ref{thm rivin} confirms Main Theorem \ref{main thm intro irrat} \eqref{main irrat ii} and Theorem \ref{intro rational hyp}.

 \begin{figure}\centering 
 \includegraphics[scale=0.7]{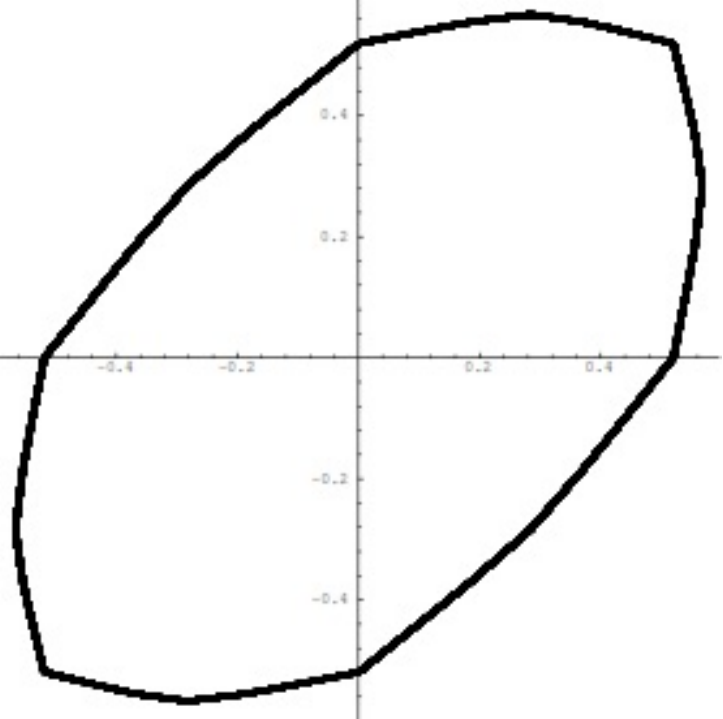}
 \caption{The level set $\{\sig_g=1\}$ in Theorem \ref{thm rivin} for the modular torus, taken from \cite{rivin0}. Even though the level set looks polygonal, it is in fact strictly convex. \label{fig_rivin}}
 \end{figure}



\begin{thebibliography}{GKOS14}
 \bibitem[Arn11]{arnaud} M.-C. Arnaud -- {\em The link between the shape of irrational Aubry-Mather sets and their Lyapunov exponents}. Annals of Mathematics 174 (2011), 1571-1601.
 
 \bibitem[Arn13]{arnaud-example} M.-C. Arnaud -- {\em Boundaries of instability zones for symplectic twist maps}. Journal of the Institute of Mathematics of Jussieu 13.1 (2013), 19-41.
 
 \bibitem[AB14]{arnaud non-hyp} M-C. Arnaud, P. Berger. -- {\em The non-hyperbolicity of irrational invariant curves for twist maps and all that follows.} arXiv:1411.7072 (2014).
 
 \bibitem[Ban88]{bangert} V. Bangert -- \emph{Mather sets for twist maps and geodesics on tori}. Dynamics Reported 1 (1988), 1-56.
 
 \bibitem[Ban89]{bangert-minimal-geod} V. Bangert -- {\em Minimal geodesics}. Ergodic Theory and Dynamical Systems 10 (1989), 263-286.
 
 \bibitem[Ban94]{bangert1} V. Bangert -- \emph{Geodesic rays, Busemann functions and monotone twist maps}. Calculus of Variations and Partial Differential Equations 2.1 (1994), 49-63.
 
 \bibitem[BCS00]{BCS} D. D.-W. Bao, S. S. Chern, Z. Shen -- \emph{An introduction to Riemann-Finsler geometry}. Graduate Texts in Mathematics 200, Springer Verlag (2000).
 
 \bibitem[Bre93]{bredon} G.\ E.\ Bredon -- {\em Topology and Geometry}. Graduate Texts in Mathematics 139, Springer Verlag (1993).
 
 \bibitem[BQ07]{bressaus-quas} X. Bressaud, A. Quas -- {\em Rate of approximation of minimizing measures}. Nonlinearity 20.4 (2007), 845-853.
 
 
 \bibitem[CR06]{carneiro} M. J. D. Carneiro, R. O. Ruggiero -- {\em On Birkhoff Theorems for Lagrangian invariant tori with closed orbits}. Manuscripta Mathematica 119.4 (2006), 411-432.
 
 \bibitem[CI99]{contreras-conj} G. Contreras, R. Iturriaga -- {\em Convex Hamiltonians without conjugate points}. Ergodic Theory and Dynamical Systems 19.4 (1999), 901-952.
 
 \bibitem[CIPP98]{cipp} G. Contreras, R. Iturriaga, G. P. Paternain, M. Paternain -- \emph{Lagrangian graphs, minimizing measures and \mane's critical values}. Geometric and Functional Analysis 8 (1998), 788-809.
 
 
 
 
 \bibitem[Hed32]{hedlund} G. A. Hedlund -- \emph{Geodesics on a two-dimensional Riemannian manifold with periodic coefficients}. The Annals of Mathematics 33.4 (1932), 719-739.
 
 \bibitem[Her83]{herman} M. R. Herman -- \emph{Sur les courbes invariantes par les diff\'eomorphismes de l'anneau, Volume 1}. Asterisque 103-104 (1983).
 
 
 \bibitem[Hop48]{hopf} E. Hopf -- {\em Closed surfaces without conjugate points}. Proceedings of the National Academy of Sciences of the United States of America 34.2 (1948), 47-51.
 
 \bibitem[KH95]{KH} A. Katok, B. Hasselblatt -- \emph{Introduction to the modern theory of dynamical systems}. Encyclopedia of Mathematics and its Applications 54, Cambridge University Press (1995).
 
 
 \bibitem[LeC88]{lecalvez} P. LeCalvez -- {\em Les ensembles d'Aubry-Mather d'un diff\"eomorphisme conservatif de l'anneau d'eviant la verticale sont en g\'en\'eral hyperboliques}. Comptes Rendus de l'Acad\'emie des Sciences - Series I - Mathematics, 306.1 (1988), 51-54.
 
 \bibitem[Mac92]{mckay} R. S. MacKay -- {\em Greene's residue criterion}. Nonlinearity 5 (1996), 161-187.
 
 \bibitem[Ma\~n96]{mane} R. \Mane -- \emph{Generic properties and problems of minimizing measures of Lagrangian systems}. Nonlinearity 9 (1996), 273-310.
 
 \bibitem[Mas96]{massart-thesis} D. Massart -- {\em Normes stables des surfaces}. Th\'ese doctorat, Ecole Normale Sup\'erieure de Lyon (1996).
 

 \bibitem[Mas03]{massart2} D. Massart -- {\em On Aubry sets and Mather's action functional}. Israel Journal of Mathematics 134 (2003), 157-171.
 
 \bibitem[Mas15]{massart-erratum} D. Massart -- {\em Erratum to ``On Aubry sets and Mather's action functional'', Israel Journal of Mathematics 134 (2003), 157-171}. Israel Journal of Mathematics 207 (2015), 1001.

 \bibitem[MS11]{massart-sorrentino} D. Massart, A. Sorrentino -- \emph{Differentiability of Mather's average action and integrability on closed surfaces}. Nonlinearity 24.6 (2011), 1777-1793.
 
 \bibitem[Mat88]{mather2} J. N. Mather -- {\em Destruction of Invariant Circles}. Ergodic Theory and Dynamical Systems 8 (1988), 199-214.

 \bibitem[Mat90]{mather1} J. N. Mather -- {\em Differentiability of the minimal average action as a function of the rotation number}. Bol. Soc. Bras. Mat. 21.1 (1990), 59-70.
 
 \bibitem[MF94]{mather-forni} J. N. Mather, G. Forni -- \emph{Action minimizing orbits in Hamiltonian systems}. Transition to chaos in classical and quantum mechanics, Springer Verlag (1994), 92-186.
 

 \bibitem[MR95a]{rivin1} G. McShane, I. Rivin -- {\em A norm on homology of surfaces and counting simple geodesics}. International Mathematics Research Notices 2 (1995), 61-69.

 \bibitem[MR95b]{rivin} G. McShane, I. Rivin -- {\em Simple curves on hyperbolic tori}. Comptes Rendus de l'Acad\'emie des Sciences - Series I - Mathematics, 320.12 (1995), 1523-1528.

 \bibitem[MR00]{rivin0} G. McShane, I. Rivin -- {\em Simple curves on hyperbolic tori}. arXiv:math/0005220 [math.GT] (2000).

 \bibitem[Mor24]{morse} H. M. Morse -- \emph{A fundamental class of geodesics on any closed surface of genus greater than one}. Transactions of the American Mathematical Society 26.1 (1924), 25-60.
 
 \bibitem[Mos62]{moser} J. Moser -- \emph{On invariant curves of area-preserving mappings of an annulus}. Nachrichten der Akademie der Wissenschaften in G\"ottingen, Mathematisch-Physikalische Klasse II 1962.1 (1962), 1-20..
 
 
 
 
 \bibitem[Sch13]{diss} J.\ P.\ Schr\"oder -- \emph{Tonelli Lagrangians on the 2-torus: global minimi\-zers, invariant tori and topological entropy}. Ph.D. thesis, Ruhr-Universit\"at Bochum (2013). Available online at {\scriptsize \url{http://www.ruhr-uni-bochum.de/ffm/Lehrstuehle/Lehrstuhl-X/jan.html}}.
 
 \bibitem[Sch15a]{paper1} J. P. Schr\"oder -- {\em Global minimizers for Tonelli Lagrangians on the 2-torus}. Journal of Topology and Analysis 7.2 (2015), 261-291.
 
 \bibitem[Sch15b]{min_rays} J. P. Schr\"oder -- \emph{Minimal rays on closed surfaces}. Preprint (2015), to appear in Israel Journal of Mathematics.
 
 \bibitem[Sch15c]{generic-paper} J. P. Schro\"oder -- \emph{Generic uniqueness of geometric minimizers}. Preprint (2015).
 
 \bibitem[Sib00]{siburg-paper} K. F. Siburg -- {\em Symplectic invariants of elliptic fixed points}. Commentarii Mathematici Helvetici 75 (2000), 681-700.

 \bibitem[Sib04]{siburg} K. F. Siburg -- {\em The principle of least action in geometry and dynamics}. Lecture Notes in Mathematics 1844, Springer Verlag (2004).

 \bibitem[Sor10]{sorrentino} A. Sorrentino -- \emph{Lecture notes on Mather's theory for Lagrangian systems}. arXiv: 1011.0590 [math.DS] (2010).
 
 \bibitem[Zau62]{zaustinsky} E. M. Zaustinsky -- \emph{Extremals on compact {\it E}-surfaces}. Transactions of the American Mathematical Society 102.3 (1962), 433-445.
\end{thebibliography}
\end{document}